\numberwithin{equation}{section}
\newtheorem{theorem}{Theorem}[section]
\newtheorem{definition}{Definition}[section]
\newtheorem{lemma}{Lemma}[section]
\newtheorem{corollary}{Corollary}[section]
\newtheorem{problem}{Problem}[section]
\newtheorem{proposition}{Proposition}[section]
\newtheorem{remark}{Remark}[section]
\newcommand{\norm}[1]{\left\|{#1}\right\|}
\newcommand{\ip}[2]{\left<#1,#2\right>}  
\newcommand{\p}[2]{\frac{\partial{#1}}{\partial{#2}}} 
\newcommand{\tos}{\xrightarrow{s}}  
\newcommand{\tow}{\rightharpoonup}
\newcommand{\tosr}{\xrightarrow{s.r.}}
\newcommand{\tonr}{\xrightarrow{n.r.}}
\newcommand{\dom}[1]{\spac{D}({#1})} 
\newcommand{\bounded}[1]{\spac{B}\left({#1}\right)}  
\newcommand{\+}[1]{\ensuremath{\boldsymbol{#1}}} 
\newcommand{\?}[1]{\ensuremath{\mathbf{#1}}}
\DeclareMathOperator{\negeig}{neg} 
\newcommand{\<}[1]{\left<{#1}\right>} 
\newcommand{\oper}[1]{\mathcal{#1}} 
\newcommand{\spac}[1]{\mathfrak{#1}}
\newcommand{\sform}[1]{\mathfrak{\lowercase{#1}}}
\newcommand{\M}[1][\lambda]{\ensuremath{\+{\oper{M}}^{#1}}} 
\newcommand{\Mt}[1][\lambda]{\ensuremath{\+{\widetilde{\oper{M}}}^{#1}}} 
\newcommandtwoopt{\Me}[2][\lambda][\varepsilon]{\ensuremath{\+{\oper{M}}^{#1}_{#2}}} 
\newcommand{\D}{\oper{D}_\pm}
\newcommand{\Dt}{\widetilde{\oper{D}}_\pm}
\newcommand{\J}[1][\lambda]{\+{\oper{J}}^{#1}}
\newcommand{\Jt}[1][\lambda]{\widetilde{\+{\oper{J}}}^{#1}}
\newcommand{\Q}[1][\lambda]{\oper{Q}_\pm^{#1}}
\newcommand{\Qt}[1][\lambda]{\widetilde{\oper{Q}}_{\pm}^{#1}}
\newcommand{\Qtsym}[1][\lambda]{\widetilde{\oper{Q}}_{\pm,sym}^{#1}}
\newcommand{\Qtskew}[1][\lambda]{\widetilde{\oper{Q}}_{\pm,skew}^{#1}}
\newcommand{\Ls}{\spac{L}}
\newcommand{\Ns}{\spac{N}}
\title{Instabilities of the relativistic Vlasov-Maxwell system on unbounded domains}
\author{Jonathan Ben-Artzi \thanks{Email: \texttt{J.Ben-Artzi@imperial.ac.uk}}} 
\affil{Department of Mathematics\\Imperial College London}
\author{Thomas Holding \thanks{Email: \texttt{T.J.Holding@maths.cam.ac.uk}}}
\affil{Cambridge Centre for Analysis\\University of Cambridge}
\begin{document}
\maketitle

\abstract{The relativistic Vlasov-Maxwell system describes the evolution of a collisionless plasma. The problem of linear instability of this system is considered in two physical settings: the so-called ``one and one-half'' dimensional case, and the three dimensional case with cylindrical symmetry. Sufficient conditions for instability are obtained in terms of the spectral properties of certain Schr\"odinger operators that act on the spatial variable alone (and not in full phase space). An important aspect of these conditions is that they do not require any boundedness assumptions on the domains, nor do they require monotonicity of the equilibrium.}

\tableofcontents

\section{Introduction}
We obtain new linear instability results for plasmas governed by the relativistic Vlasov-Maxwell (RVM) system of equations. The main unknowns are two functions $f^\pm=f^\pm(t,x,v)\geq0$ measuring the density of positively and negatively charged particles that at time $t\in[0,\infty)$ are located at the point $x\in \mathbb R^d$ and have momentum $v\in\mathbb R^d$. The densities $f^\pm$ evolve according to the Vlasov equations
	\begin{equation}\label{eq:vlasov}
	\frac{\partial f}{\partial t}^\pm+\+{\hat{v}}\cdot\nabla_{\+x} f^\pm+\mathbf F^\pm\cdot\nabla_{\+v}f^\pm=0
	\end{equation}
where $\+{\hat{v}}=\+v/\sqrt{1+|\+v|^2}$ is the relativistic velocity (the speed of light is taken to be $1$ for simplicity) and where $\mathbf F^\pm=\mathbf F^\pm(t,\+x,\+v)$ is the Lorentz force, given by
	\begin{equation*}
	\mathbf F^\pm=\pm\left(\mathbf E+\mathbf E^{ext}+\+{\hat{v}}\+\times (\mathbf B+\mathbf B^{ext})\right)
	\end{equation*}
with $\mathbf E=\mathbf E(t,\+x)$ and $\mathbf B=\mathbf B(t,\+x)$ being the electric and magnetic fields, respectively, and $\mathbf E^{ext},\mathbf B^{ext}$ possible fixed external fields. The \emph{self-consistent} fields are retrieved from Maxwell's equations
	\begin{equation*} 
	\nabla\cdot{\mathbf E}=\rho,\quad
	\nabla\cdot{\mathbf B}=0,\quad
	\nabla\times{\mathbf E}=-\frac{\partial{\mathbf B}}{\partial t},\quad
	\nabla\times{\mathbf B}={\mathbf j}+\frac{\partial {\mathbf E}}{\partial t},
	\end{equation*}
where
	\begin{equation}\label{eq:rho}
	\rho=\rho(t,\+x)=\int (f^+-f^-)\;d\+v
	\end{equation}
is the charge density and
	\begin{equation}\label{eq:j}
	{\mathbf j}={\mathbf j}(t,\+x)=\int \+{\hat{v}} (f^+-f^-)\;d\+v
	\end{equation} 
is the current density. In addition to the speed of light, we have taken all other constants that typically appear in these equations (such as the particle masses) to be $1$ so as to keep the notation simple.

\paragraph{Novelty of the results.}
Let us mention the main novel aspects of our instability results:

\emph{Unbounded domains:} our problems are set in unbounded domains (as opposed to domains with boundaries or periodic domains).  One consequence is that the spectrum of the Laplacian (which shall appear prominently) has an essential part.

\emph{Non-monotone equilibrium:}  we do not assume that the equilibrium in question is (strongly) monotone (see \eqref{eq:monotone} below). Many estimates in previous works rely heavily on monotonicity assumptions.

\emph{Construction of equilibria:}  in \autoref{sec:examples} we construct nontrivial equilibria in the \emph{unbounded, compactly supported} $1.5d$ case. Previously, this has been done in the periodic setting by means of a perturbation argument about the trivial solution which is a center (in the dynamical systems sense). The proof here relies on a fixed point argument.

\subsection{Main results}
For the convenience of the reader, we provide the full statements of our results here, although some necessary definitions are too cumbersome. We shall refer to the later sections for these definitions.

\paragraph{The physical setting.}
As is explained in detail below we consider two problems: the \emph{$1.5$ dimensional} case and the \emph{$3$ dimensional} case with \emph{cylindrical symmetry}. We shall refer to these two settings as the \emph{$1.5d$ case} and the \emph{$3d$ case}, respectively, for brevity. In a nutshell, we consider these settings because they provide enough structure so that basic existence and uniqueness results hold and because they possess well-known conserved quantities which may be written explicitly.

\paragraph{The equilibrium.}
The conserved quantities mentioned above -- the microscopic energy $e^\pm$ and momentum $p^\pm$ -- are the subject of further discussion below (see \eqref{eq:constants-1.5} for the $1.5d$ case and \eqref{eq:constants-3} for the $3d$ case), however we stress the fact that they are functions that satisfy the \emph{time-independent} Vlasov equations. Hence any  functions of the form
	\begin{equation}\label{eq:jeans}
	f^{0,\pm}(\+x,\+v)=\mu^\pm(e^\pm,p^\pm)
	\end{equation}
are equilibria of the corresponding Vlasov equations (for positively or negatively charged particles). The converse statement -- that any equilibrium may be written in this form is called Jeans' theorem \cite{Jeans1915}. In \autoref{sec:examples} we show how to obtain an actual equilibrium of the full Vlasov-Maxwell system in both physical settings. When there is no room for confusion we simply write $\mu^\pm(e,p)$ or $\mu^\pm$ instead of $\mu^\pm(e^\pm,p^\pm)$. We shall always assume that
	\begin{equation*} 
	0\leq f^{0,\pm}(\+x,\+v)\in C^1\text{ have compact support $\Omega$ in the $\+x$-variable}.
	\end{equation*}
Again, the existence of such equilibria is the subject of \autoref{sec:examples}. In addition, we must assume that
	\begin{equation}\label{eq:weight}
	\begin{split}
	&\text{there exist weight functions }w^\pm=c(1+|e^\pm|)^{-\alpha}\\
	&\text{where }\alpha>\text{dimension of momentum space, and }c>0,
	\end{split}
	\end{equation}
such that the integrability condition
	\begin{equation}\label{eq:int-condition}
	\left(\left|\frac{\partial\mu^\pm}{\partial e}\right|+\left|\frac{\partial\mu^\pm}{\partial p}\right|\right)(e^\pm,p^\pm)<w^\pm(e^\pm)
	\end{equation}
holds. This implies that $\int\left(|\mu^\pm_e|+|\mu^\pm_p|\right)d\+xd\+v<\infty$ in both the $1.5d$ and $3d$ cases, where we have abbreviated the writing of the partial derivatives of $\mu^\pm$. This abbreviated notation shall be used throughout the paper. It is typically \cite{Lin2007} assumed that
	\begin{equation}\label{eq:monotone}
	\mu^\pm_e<0\quad\text{whenever }\mu^\pm>0.
	\end{equation}
We call this a \emph{strong monotonicity condition.} \textbf{We do \underline{not} make any such assumption.}

\paragraph{The main results.}\label{Intro:main results}
To facilitate the understanding of our main results we state them now, trying not obscure the big picture with technical details. Hence we attempt to only extract those aspects of the statements that are crucial for understanding, while refering to later sections for some additional definitions. First we define our precise notion of instability:
\begin{definition}[Spectral instability]
We say that a given equilibrium $\mu^\pm$ is \emph{spectrally unstable}, if the system linearised around it has a purely growing mode solution of the form
	\begin{equation}\label{eq:purely-growing}
	\left(e^{\lambda t}f^\pm(\+x,\+v),e^{\lambda t}\mathbf{E}(\+x),e^{\lambda t}\mathbf{B}(\+x)\right),
	\quad
	\lambda>0.
	\end{equation}
\end{definition}
We also need the following definition:
\begin{definition}
Given a self-adjoint operator (bounded or unbounded) $\oper{A}$, we denote by $\negeig(\oper{A})$ the number of negative eigenvalues (counting multiplicity) that it has whenever there is a finite number of such eigenvalues.
\end{definition}

In our first result we obtain a sufficient condition for spectral instability of equilibria in the $1.5d$ case. The condition is expressed in terms of spectral properties of certain operators that act on functions of the spatial variable alone.
\begin{theorem}[Spectral instability: $1.5d$ case]\label{theorem1}
Let $f^{0,\pm}(x,\+v)=\mu^\pm(e,p)$ be an equilibrium of the $1.5d$ system \eqref{eq:rvm-1.5} satisfying \eqref{eq:int-condition}. There exist self-adjoint Schr\"odinger operators $\mathcal A^0_1$ and $\mathcal A^0_2$ and a bounded  operator $\mathcal B^0$ (all defined in \eqref{the 0 operators 1.5d}) acting only on functions of the spatial variable (and not the momentum variable) such that the equilibrium is spectrally unstable if $0$ is not in the spectrum of $\mathcal A_1^0$ and
	\begin{equation}\label{eq:thm1-condition}
	\operatorname{neg}\left(\mathcal A_2^0+\left(\mathcal B^0\right)^*\left(\mathcal A_1^0\right)^{-1}\mathcal B^0\right)> \operatorname{neg}\left(\mathcal A_1^0\right).
	\end{equation}
\end{theorem}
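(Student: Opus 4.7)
The plan is to search for purely growing modes of the form~\eqref{eq:purely-growing} by integrating the linearized Vlasov equation along the equilibrium characteristics and thereby reducing Maxwell's equations to a self-adjoint spectral problem in the spatial variable alone. First, writing $f^\pm=e^{\lambda t}g^\pm(x,\+v)$, $\mathbf E=e^{\lambda t}\mathcal E(x)$, $\mathbf B=e^{\lambda t}\mathcal B(x)$ and introducing scalar/vector potentials $\phi,\psi$ for the perturbed fields, the linearized Vlasov equation takes the form $(\lambda+\mathcal D_\pm)g^\pm = (\text{force perturbation})\cdot\nabla_{\+v}\mu^\pm$, where $\mathcal D_\pm$ is the equilibrium transport operator. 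Using that $\mu^\pm$ depends only on the conserved quantities $(e^\pm,p^\pm)$ and that the force perturbation acts as a gradient in these variables, the right-hand side can be partially absorbed, yielding a representation of the shape $g^\pm=\mu^\pm_e\phi+\mu^\pm_p\psi-(\lambda+\mathcal D_\pm)^{-1}[\cdots]$. Substituting into~\eqref{eq:rho} and~\eqref{eq:j} then expresses $\rho$ and $\+j$ in terms of $\lambda$-dependent operators acting on $(\phi,\psi)$ alone.

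Once $g^\pm$ has been eliminated, I would organize Maxwell's equations as a self-adjoint block system
\begin{equation*}
\begin{pmatrix}\mathcal A_1^\lambda & \mathcal B^\lambda\\ (\mathcal B^\lambda)^* & \mathcal A_2^\lambda\end{pmatrix}\begin{pmatrix}\phi\\\psi\end{pmatrix}=0,
\end{equation*}
where the $\mathcal A_j^\lambda$ are Schrödinger-type operators on $L^2(\mathbb R)$ that reduce to the $\mathcal A_j^0$ of the statement when $\lambda=0$. Under the assumption $0\notin\sigma(\mathcal A_1^0)$, which by continuity persists for $\lambda$ in some neighborhood of $0$, I would take the Schur complement and reduce the problem to the single self-adjoint operator $\mathcal L^\lambda:=\mathcal A_2^\lambda+(\mathcal B^\lambda)^*(\mathcal A_1^\lambda)^{-1}\mathcal B^\lambda$. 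A growing mode at rate $\lambda^*>0$ then corresponds precisely to a nontrivial element of $\ker\mathcal L^{\lambda^*}$.

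The existence of such a $\lambda^*$ is obtained through a continuation/eigenvalue-counting argument. The key steps are: (i) prove continuity of $\lambda\mapsto\mathcal L^\lambda$ on $(0,\infty)$ in an appropriate (e.g.\ norm-resolvent) sense; (ii) establish monotonicity in $\lambda$ so that $\operatorname{neg}(\mathcal L^\lambda)$ is a nonincreasing, integer-valued function of $\lambda$; and (iii) show that for $\lambda$ sufficiently large, $(\lambda+\mathcal D_\pm)^{-1}\to 0$ strongly, so that $\operatorname{neg}(\mathcal L^\lambda)\leq\operatorname{neg}(\mathcal A_1^0)$ in the limit. Combined with the hypothesis $\operatorname{neg}(\mathcal L^0)>\operatorname{neg}(\mathcal A_1^0)$, this forces at least one negative eigenvalue of $\mathcal L^\lambda$ to cross zero at some $\lambda^*\in(0,\infty)$, yielding the required kernel element and hence the growing mode.

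The main obstacle will be controlling the essential spectrum on the unbounded spatial domain. Since the $\mathcal A_j^\lambda$ are Schrödinger operators on $\mathbb R$, their essential spectrum typically fills a half-line and may accumulate at $0$, so I must verify that $\operatorname{neg}(\mathcal A_1^0)$ and $\operatorname{neg}(\mathcal L^0)$ are actually finite (which should follow from the compact $x$-support $\Omega$ of $\mu^\pm$ together with the decay encoded in~\eqref{eq:int-condition}), that negative eigenvalues of $\mathcal L^\lambda$ cannot be absorbed into the essential spectrum as $\lambda$ varies without first crossing zero, and that the Schur-complement identity remains valid when $\mathcal A_1^\lambda$ is merely boundedly invertible rather than coercive. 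Each of these points requires a delicate resolvent analysis replacing the compactness arguments that would be available in the bounded or periodic setting.
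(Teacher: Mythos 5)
Your reduction (inverting $(\lambda+\D)$ along trajectories to eliminate $f^\pm$, then recasting Maxwell's equations as a self-adjoint block system in the potentials) and your overall strategy (compare negative-eigenvalue counts at small and large $\lambda$ and force an eigenvalue to cross zero) do follow the paper. The genuine gap is in how you propose to run the continuation. You track $\negeig$ of the Schur complement $\oper{L}^\lambda=\oper{A}_2^\lambda+(\oper{B}^\lambda)^*(\oper{A}_1^\lambda)^{-1}\oper{B}^\lambda$ over all of $(0,\infty)$, but $\oper{L}^\lambda$ only exists where $\oper{A}_1^\lambda$ is invertible, and the hypothesis $0\notin\operatorname{sp}(\oper{A}_1^0)$ gives this only on some interval $[0,\lambda_*]$ (and again for large $\lambda$, where $\oper{A}_1^\lambda$ is coercive). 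At intermediate $\lambda$ an eigenvalue of $\oper{A}_1^\lambda$ may cross zero; there $\negeig(\oper{L}^\lambda)$ can jump -- an eigenvalue of $\oper{L}^\lambda$ escapes to (or enters from) $-\infty$ -- without producing any kernel of the full block operator $\M$ (a kernel vector of $\oper{A}_1^\lambda$ is not a kernel vector of $\M$, since $\M(0,u)^T=((\oper{B}^\lambda)^*u,0)^T$). So a discrepancy between $\negeig(\oper{L}^{\lambda})$ at the two ends does not by itself force $\ker\M\neq\{0\}$. Your step (ii), monotonicity of $\negeig(\oper{L}^\lambda)$ in $\lambda$, is both unjustified -- the perturbations enter through $\Q$ weighted by $\mu_e^\pm,\mu_p^\pm$, which carry no sign precisely because no monotonicity of the equilibrium is assumed -- and insufficient to cure this. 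This is exactly why the criterion \eqref{eq:thm1-condition} involves $\negeig(\oper{A}_1^0)$ rather than merely $\negeig(\oper{L}^0)\ge1$: the quantity that can be continued in $\lambda$ is the negative count of the full, strongly indefinite operator $\M$ (note the sign $-\oper{A}_1^\lambda$ in \eqref{eq:m lambda 2}, which your block system drops), and the paper makes that count meaningful by adding $\varepsilon W$ and truncating to finite rank, where $\negeig(\+{\oper{M}}^{\lambda_*}_{\varepsilon,n})=\negeig\bigl(\oper{A}^{\lambda_*}_{2,\varepsilon}+(\oper{B}^{\lambda_*})^*(\oper{A}_1^{\lambda_*})^{-1}\oper{B}^{\lambda_*}\bigr)+n-\negeig(\oper{A}^{\lambda_*}_1)$ by \autoref{eigenvalue counts of matrices}, equals $n$ for large $\lambda$ (\autoref{finding lambda upper *}), and exceeds $n$ at $\lambda_*$ under \eqref{eq:thm1-condition} (\autoref{eigcountcond1movedlemma}, \autoref{really finding lambda lower * 15d}); the approximation result \autoref{thm:approximation} then transfers the resulting zero eigenvalue back to $\M$ as $n\to\infty$, $\varepsilon\to0$. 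Your proposal offers no substitute for this mechanism, and the fact that $\negeig(\oper{A}_1^0)$ plays no role in your argument is the symptom.

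A secondary omission: a nontrivial kernel of $\oper{L}^{\lambda}$ (equivalently of $\M$ when $\oper{A}_1^\lambda$ is invertible) is not yet a growing mode. The block system encodes only \eqref{Ampere's equation 1 1.5d} and \eqref{Gauss's equation 1.5d}; the remaining Maxwell equation \eqref{Ampere's equation 2 1.5d} has to be recovered a posteriori from the continuity equation (which rests on \eqref{eq:perfect deriv} and the parity of $\mu^\pm$ in $v_1$), together with the integrability of $f^\pm$, $\rho$, $j_i$ and the Neumann condition on $\phi$. This is carried out in \autoref{sec:existence 15d}, and your phrase ``corresponds precisely'' skips it.
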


The second result provides a similar statement in the $3d$ case with cylindrical symmetry, as discussed in further detail in \autoref{sec:intro-cyl} below.

\begin{theorem}[Spectral instability: $3d$ case]\label{theorem2}
Let $f^{0,\pm}(\+x,\+v)=\mu^\pm(e,p)$ be a cylindrically symmetric equilibrium of the RVM system satisfying \eqref{eq:int-condition}. There exist self-adjoint operators $\widetilde{\oper{A}}^0_1$,$\widetilde{\oper{A}}^0_2$ and $\widetilde{\oper{A}}^0_3$ and a bounded operator $\widetilde{\oper{B}}_1^0$ (all defined in \eqref{eq:the 0 operators 3d}) acting in the spatial variable alone (and not the momentum variable) such that the equilibrium is spectrally unstable if $0$ is not an $L^6$-eigenvalue of $\widetilde{\oper{A}}^0_3$ (see \autoref{def:L6kernel} below), $0$ is not an eigenvalue of $\widetilde{\oper{A}}^0_1$ ($0$ will always lie in the essential spectrum of $\widetilde{\oper{A}}^0_1$, but this is not the same as $0$ being an eigenvalue) and
	\begin{equation}\label{eq:thm2-condition}
	\operatorname{neg}\left(\widetilde{\oper{A}}_2^0+\left(\widetilde{\oper{B}}_1^0\right)^*\left(\widetilde{\oper{A}}_1^0\right)^{-1}\widetilde{\oper{B}}_1^0\right)
	> \operatorname{neg}\left(\widetilde{\oper{A}}_1^0\right)+\operatorname{neg}\left(\widetilde{\oper{A}}_3^0\right).
	\end{equation}
\end{theorem}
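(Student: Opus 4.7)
My plan would mirror the strategy for Theorem~\ref{theorem1} (the $1.5d$ case), but the essential new difficulty is that cylindrical symmetry introduces a third spatial potential -- for the azimuthal component of the magnetic field -- and the corresponding operator $\widetilde{\oper{A}}_3^0$ has $0$ lying in its essential spectrum due to the unboundedness of the domain. I would proceed as follows. First, linearize the RVM system around the equilibrium $\mu^\pm$ and look for modes of the form \eqref{eq:purely-growing}. Exploiting cylindrical symmetry, I would introduce a triple $(\phi,\psi,\chi)$ of scalar potentials for the perturbation: an electric scalar potential together with the two independent components of a vector potential adapted to the symmetry (the axial and azimuthal parts of $\mathbf{A}$ decouple from the radial one by the symmetry). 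These potentials are functions of $\+x$ alone.

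Next, I would integrate the linearized Vlasov equation along the characteristics of the equilibrium flow. Here the fact that $\lambda>0$ is crucial, as it permits inversion of the transport operator $\lambda+\+{\hat{v}}\cdot\nabla_{\+x}+\mathbf F^{0,\pm}\cdot\nabla_{\+v}$ via Duhamel. The conserved quantities $e^\pm$ and $p^\pm$ (the microscopic energy and the canonical angular momentum specific to the $3d$ cylindrical case, see \eqref{eq:constants-3}) allow one to split $\mu^\pm_e$ and $\mu^\pm_p$ terms and produce explicit kernel formulas for $f^{\pm,\lambda}$ as integral operators applied to the triple $(\phi,\psi,\chi)$. Substituting these into Maxwell's equations (rewritten for the potentials) produces a $\lambda$-dependent $3\times 3$ self-adjoint system on spatial functions, whose diagonal entries converge as $\lambda\downarrow 0^+$ to $\widetilde{\oper{A}}_1^\lambda$, $\widetilde{\oper{A}}_2^\lambda$, $\widetilde{\oper{A}}_3^\lambda$ of \eqref{eq:the 0 operators 3d} and whose off-diagonal coupling is carried by $\widetilde{\oper{B}}_1^\lambda$ (the remaining couplings vanishing by parity/symmetry considerations). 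Existence of a growing mode is equivalent to this $3\times 3$ operator having a nontrivial kernel at some $\lambda>0$.

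Under the non-degeneracy hypotheses in the statement (no $0$ eigenvalue of $\widetilde{\oper{A}}_1^0$ and no $L^6$-eigenvalue of $\widetilde{\oper{A}}_3^0$), the operators $\widetilde{\oper{A}}_1^\lambda$ and $\widetilde{\oper{A}}_3^\lambda$ are invertible for small $\lambda>0$, so two Schur complement reductions eliminate the first and third potentials and produce a single self-adjoint operator
\[
\oper{L}^\lambda \;=\; \widetilde{\oper{A}}_2^\lambda + (\widetilde{\oper{B}}_1^\lambda)^*(\widetilde{\oper{A}}_1^\lambda)^{-1}\widetilde{\oper{B}}_1^\lambda
\]
acting on the remaining potential, whose kernel at $\lambda>0$ corresponds to a growing mode. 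Then the instability follows from a continuation argument on $\operatorname{neg}(\oper{L}^\lambda)$: as $\lambda\to\infty$ the transport kernels decay and $\oper{L}^\lambda$ becomes nonnegative, while as $\lambda\downarrow 0^+$ one shows $\oper{L}^\lambda\to \oper{L}^0$ in the strong resolvent sense so that $\operatorname{neg}(\oper{L}^0)\le \liminf_{\lambda\downarrow 0^+}\operatorname{neg}(\oper{L}^\lambda)$. Any jump in $\operatorname{neg}(\oper{L}^\lambda)$ along $\lambda\in(0,\infty)$ forces an eigenvalue crossing through $0$, and hence a growing mode. Finally, one must account for the negative directions lost in each of the two Schur reductions, which contribute exactly $\operatorname{neg}(\widetilde{\oper{A}}_1^0)+\operatorname{neg}(\widetilde{\oper{A}}_3^0)$ to the inertia bookkeeping; the strict inequality \eqref{eq:thm2-condition} is precisely what is needed to guarantee such a jump.

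The main obstacle, more delicate than in the $1.5d$ case, is the careful treatment of $\widetilde{\oper{A}}_3^0$ near the threshold $0$ of its essential spectrum: one must justify the Schur elimination uniformly in $\lambda$ and relate the generalized kernel in $L^6$ (defined in \autoref{def:L6kernel}) to the boundary behavior of the resolvent $(\widetilde{\oper{A}}_3^\lambda)^{-1}$ as $\lambda\downarrow 0^+$. This is where the unboundedness of the domain forces the $L^6$-hypothesis, rather than a simple eigenvalue hypothesis, and it is the step I expect to consume most of the technical work.
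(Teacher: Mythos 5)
Your overall skeleton (potentials, inversion of the transport operator, reduction to a $\lambda$-dependent self-adjoint spatial system, and a comparison of negative-eigenvalue counts at small and large $\lambda$) does match the paper's strategy, but two of your structural claims are wrong and they break the specific argument you propose. First, for $\lambda>0$ the coupling of the reduced system is \emph{not} carried by $\widetilde{\oper{B}}_1^\lambda$ alone: inverting $\lambda+\Dt$ produces a symmetric and a skew-symmetric part, and the skew part survives the $d\+v$ integration precisely in the cross terms you discard, giving nonzero couplings $\widetilde{\oper{B}}_2^\lambda$ (between the $\theta$- and $rz$-parts of the vector potential) and $\widetilde{\oper{B}}_3^\lambda$ (between $\varphi$ and the $rz$-part); these vanish only at $\lambda=0$. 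Hence $\oper{L}^\lambda=\widetilde{\oper{A}}_2^\lambda+(\widetilde{\oper{B}}_1^\lambda)^*(\widetilde{\oper{A}}_1^\lambda)^{-1}\widetilde{\oper{B}}_1^\lambda$ is not the Schur complement of the full $3\times3$ system for $\lambda>0$, and a kernel of $\oper{L}^\lambda$ need not give a growing mode; the correct reduction (used in the paper only for the small-$\lambda$ count) groups $\widetilde{\oper{A}}_1^\lambda,\widetilde{\oper{A}}_3^\lambda,\widetilde{\oper{B}}_3^\lambda$ into a single block $\widetilde{\+{\oper{A}}}_4^\lambda$ coupled through $(\widetilde{\oper{B}}_1^\lambda,\widetilde{\oper{B}}_2^\lambda)$. (Also, the $rz$-block is genuinely vector-valued; the paper deliberately avoids scalarising in cylindrical coordinates because of the singularity on the axis.)

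Second, your continuation of $\operatorname{neg}(\oper{L}^\lambda)$ over all of $(0,\infty)$ requires the eliminated blocks to be invertible at every intermediate $\lambda$, which the hypotheses do not provide: they control only a neighbourhood of $\lambda=0$ (and positivity for large $\lambda$). At an intermediate $\lambda$ where $\widetilde{\+{\oper{A}}}_4^\lambda$ acquires a kernel your reduced operator ceases to exist and its inertia can jump without any kernel of the full operator; and one cannot instead track the negative count of $\Mt$ itself, since it is not semibounded (its essential spectrum is $(-\infty,-\lambda^2]\cup[\lambda^2,\infty)$). This is exactly the gap the paper's machinery fills: add $\varepsilon W$ to compactify the resolvent, form finite-dimensional truncations $\widetilde{\+{\oper{M}}}^\lambda_{\varepsilon,n}$, count their negative eigenvalues at $\lambda_*$ via the block-inertia lemma (\autoref{eigenvalue counts of matrices}) and at large $\lambda^*$ (exactly $n$), obtain a zero crossing for each truncation, and pass to the limit in $n$ and $\varepsilon$ using the Hausdorff spectral-continuity result (\autoref{thm:approximation}) together with the spectral gap $(-\lambda^2,\lambda^2)$ to conclude $0$ is a genuine eigenvalue of $\Mt$. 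Two further steps you omit are also needed: showing that $\negeig(\widetilde{\oper{A}}_1^\lambda)+\negeig(\widetilde{\oper{A}}_3^\lambda)$ does not drop as $\lambda\downarrow0$ by eigenvalues being absorbed into the essential spectrum at the threshold (this is a second place where the no-eigenvalue/no-$L^6$-eigenvalue hypotheses enter, via the compactness argument that would otherwise manufacture an $L^6$-eigenfunction), and verifying that a kernel element of the spatial system yields a true growing mode, i.e.\ recovering the Lorenz gauge condition and the remaining Maxwell equations from the continuity equation.
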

Let us make precise the notion of an $L^6$-eigenvalue.

\begin{definition}[$L^6$-eigenvalue]\label{def:L6kernel}
We say that $\lambda\in\mathbb{R}$ is an $L^6$-eigenvalue of a self-adjoint Schr\"{o}dinger operator $\+{\oper{A}}:H^2(\mathbb{R}^n;\mathbb{R}^m)\subset L^2(\mathbb{R}^n;\mathbb{R}^m)\to L^2(\mathbb{R}^n;\mathbb{R}^m)$ given by $\+{\oper{A}}=-\+\Delta+\+{\oper{K}}$, if there exists a function $0\neq\+u^\lambda\in   H^2_{loc}(\mathbb{R}^n;\mathbb{R}^m)\cap L^6(\mathbb{R}^n;\mathbb{R}^m)$, with $\+\nabla \+u^\lambda\in L^2(\mathbb{R}^n;\mathbb{R}^{m})^n$, such that $\+{\oper{A}}\+u^\lambda=\lambda\+ u^\lambda$ in the sense of distributions. The function $\+ u^\lambda$ is called an $L^6$-eigenfunction.
\end{definition}

\begin{remark}\label{rek:L6}
We remark that $L^6$ is a natural space to consider in three dimensions due to the compact embedding $H^1(\Omega)\hookrightarrow L^6(\Omega)$ where $\Omega\subset\mathbb{R}^3$ is a bounded and smooth domain. In fact, any function which decays at infinity and whose first derivatives are square integrable, also belongs to $L^6(\mathbb{R}^3)$. Therefore this is a natural condition for the potential formulation of Maxwell's equations where there is no physical reason for the potentials to be square integrable, but the condition that the fields are square integrable corresponds to the physical condition that the electromagnetic fields have finite energy.
\end{remark}

The proofs of these two theorems appear in \autoref{sec:finding kernel 1.5d} and \autoref{sec:finding kernel 3d}, receptively.
Let us describe the main ideas of the proofs. For brevity, we omit the $\pm$ signs distinguishing between positively and negatively charged particles in this paragraph. Since we are interested in linear instability, we linearise the Vlasov equation around $f^0$. The only nonlinear term is the forcing term $\mathbf F\cdot\nabla_vf$, so that the linearisation of \eqref{eq:vlasov} becomes
	\begin{equation}\label{eq:lin-vlasov}
	\frac{\partial f}{\partial t}+\+{\hat{v}}\cdot\nabla_{\+x} f+\mathbf F^0\cdot\nabla_{\+v}f=-\mathbf F\cdot\nabla_{\+v}f^0
	\end{equation}
where $\mathbf F^0$ is the equilibrium self consistent Lorentz force and $\mathbf F$ is the linearized Lorentz force. We make the following growing-mode ansatz:
	\begin{equation}\label{ansatz}
	\begin{split}
	\text{\textbf{Ansatz}: }&\text{the perturbations }(f,\mathbf{E},\mathbf{B})\text{ have}\\
	&\text{time dependence }e^{\lambda t},\text{ where }\lambda>0.
	\end{split}
	\end{equation}
Equation \eqref{eq:lin-vlasov} can therefore be written as
	\begin{equation}\label{eq:lin-vlasov2}
	\left(\lambda+\oper{D}\right)f=-\mathbf F\cdot\nabla_{\+v}f^0
	\end{equation}
where
	\begin{equation}\label{eq:transport-op}
	\oper{D}=\+{\hat{v}}\cdot\nabla_{\+x}+\mathbf F^0\cdot\nabla_{\+v}
	\end{equation}
is the linearised Vlasov transport operator. We then invert \eqref{eq:lin-vlasov2} by applying $\lambda\left(\lambda+\oper{D}\right)^{-1}$, which is an ergodic averaging operator along the trajectories of $\oper{D}$ (depending upon $\lambda$ as a parameter), see \cite[Eq. (2.10)]{Ben-Artzi2011}. Hence we obtain an expression of $f$ in terms of a certain average of the right hand side $-\mathbf F\cdot\nabla_{\+v}f^0$ depending upon the parameter $\lambda$ (see \eqref{f in 1.5d} and \eqref{f in 3d}). This expression for $f$ is substituted into Maxwell's equations through the particle and charge densities, resulting in a system of (elliptic) equations for the spatial variable alone (recall that the momentum variable is integrated in the expressions for $\rho$ and $\mathbf{j}$). The number of linearly independent equations is less than one would expect due to the imposed symmetries. However, in both cases the equations can be written so that they form a self-adjoint system denoted $\mathcal \M$ (see \eqref{Formally definition of M} for the $1.5d$ case and \eqref{Formally definition of M 2} for the $3d$ case) that has the general form
	$$
	\M
	=
	\begin{bmatrix}
    -\Delta+1&0\\
    0&\Delta-1
   \end{bmatrix}
   +
	\+{\oper{K}}^\lambda
	$$
acting on the electric and magnetic potentials, where $\+{\oper{K}}^\lambda$ is a uniformly bounded and  symmetric family.

The problem then reduces to showing that the equation $\M \+u=0$ has a nontrivial solution for some value of $\lambda>0$. The difficulty here is twofold: first, the spectrum of $\M$ is unbounded (not even semi-bounded) and includes essential spectrum extending to both $+\infty$ and $-\infty$. Second, for each $\lambda$, the operator $\M$ has a different spectrum: one must analyse a family of spectra that depends upon the parameter $\lambda$. In \cite{Ben-Artzi2013e} we address the following related problem:

\begin{problem}
Consider the family of self-adjoint unbounded operators
	\begin{equation*}
	\M=\+{\oper{A}}+\+{\oper{K}}^\lambda=
	\begin{bmatrix}
	-\Delta+1&0\\
	0&\Delta-1
	\end{bmatrix}
	+
	\begin{bmatrix}
	\oper{K}^\lambda_{++}&\oper{K}^\lambda_{+-}\\
	\oper{K}^\lambda_{-+}&\oper{K}^\lambda_{--}
	\end{bmatrix},\quad \lambda\in [0,1]
	\end{equation*}
acting in $L^2(\mathbb{R}^d)\oplus L^2(\mathbb{R}^d)$, where  $\{\+{\mathcal{K}}^\lambda\}_{\lambda\in [0,1]}$ is a uniformly bounded, symmetric and strongly continuous family. {Is it possible to construct explicit finite-dimensional symmetric approximations of $\M$ whose spectrum in $(-1,1)$ converges to that of $\M$ for all $\lambda$ simultaneously?}
\end{problem}

A solution to this problem allows us to construct finite-dimensional approximations to $\M$. We discuss this problem in \autoref{Abstract Section}. The conditions \eqref{eq:thm1-condition} and  \eqref{eq:thm2-condition} appearing in the main theorems above translate into analogous conditions on the approximations, and those, in turn, guarantee the existence of a nontrivial \emph{approximate} solution. Since the approximate problems converge (in an appropriate sense) to the original problem, this is enough to complete the proof. A crucial ingredient is the self-adjointness of all operators: this guarantees that the spectrum is restricted to the real line. The strategy is to ``track'' eigenvalues as a function of the parameter $\lambda$ and conclude that they cross through $0$ for some value $\lambda>0$. To do so, we require knowledge of the spectrum of the operator $\M$ for small positive $\lambda$, which is obtained from the assumptions \eqref{eq:thm1-condition} and \eqref{eq:thm2-condition}, and for large $\lambda$ which arises naturally from the form of the problem.

Yet even with a solution to this problem at hand, some difficulties remain. In the cylindrically symmetric case there is a geometric difficulty. Namely, cylindrical symmetries must be respected, a fact that requires a somewhat more cumbersome functional setup. In particular, the singular nature of the coordinate chart along the axis of symmetry requires special attention. To circumvent this issue we shall do all computation in Cartesian coordinates, and use carefully chosen subspaces to decompose the magnetic potential. The second difficulty is the lack of a spectral gap, which is due to the unbounded nature of the problem in physical space. As a consequence, the dependence of the spectrum of $\M$ on $\lambda$ is delicate, especially as $\lambda\to0$, and needs careful consideration.

\subsection{Previous results}
\paragraph{Existence theory.}
The main difficulty in attaining existence results for Vlasov systems is in controlling particle acceleration due to the nonlinear forcing term. Hence existence and uniqueness has only been proved under various symmetry assumptions. In \cite{Glassey1990} global existence in the $1.5d$ case was established and in \cite{Glassey1988a} the cylindrically symmetric case was considered. Local existence and uniqueness is due to \cite{Wollman1984}.

\paragraph{Stability theory.}
One of the important early results on (linear) stability of plasmas is that of Penrose  \cite{Penrose1960}. Two notable later results are \cite{Marsden1985,Marchioro1986}. We refer to \cite{Ben-Artzi2011} for additional references.
The current result continues a program initiated by Lin and Strauss \cite{Lin2007,Lin2008} and continued by the first author \cite{Ben-Artzi2011,Ben-Artzi2011b}. In \cite{Lin2007,Lin2008} the equilibria were always assumed to be strongly monotone, in the sense of \eqref{eq:monotone}.
This added sign condition (which is widely used within the physics community, and is believed to be crucial for stability results) allowed them to obtain in \cite{Lin2007} a linear \emph{stability} criterion which was complemented by a linear \emph{instability} criterion in \cite{Lin2008}. Combined, these two results produced a necessary and sufficient criterion for stability in the following sense: there exists a Schr\"odinger operator $\mathcal L^0$ acting only in the spatial variable, such that $\mathcal L^0\geq0$ implies spectral stability, and $\mathcal L^0\ngeq0$ implies spectral instability. In \cite{Ben-Artzi2011,Ben-Artzi2011b} the monotonicity assumption was removed, which mainly impacted the ability to obtain stability results. The instability results are similar to the ones of Lin and Strauss, though the author only considers the $1.5d$ case with periodicity. This is due to his methods which crucially require a Poincar\'e inequality. We remark that our results recover all previous results when one restricts to the monotone case.


\subsection{The \texorpdfstring{$1.5d$}{1.5d} case}
First we treat the so-called $1.5d$ case, which is the lowest dimensional setting that permits nontrivial electromagnetic fields. In this setting, the plasma is assumed to have certain symmetries in phase-space that render the distribution function to be a function of only one spatial variable $x$ and two momentum variables $\+v=(v_1,v_2)$, with $v_1$ being aligned with $x$. The only non-trivial components of the fields are the first two components of the electric field and the third component of the magnetic field: $\mathbf E=(E_1,E_2,0)$ and $\mathbf B=(0,0,B)$, and similarly for the equilibrium fields. The RVM system becomes the following system of scalar equations
		\begin{subequations}\label{eq:rvm-1.5}
		\begin{align}
		&\partial_tf^\pm+\hat{v}_1\partial_xf^\pm\pm(E_1+\hat{v}_2B)\partial_{v_1}f^\pm\pm(E_2-\hat{v}_1B)\partial_{v_2}f^\pm
		=
		0						\label{eq:vlasov-1.5}\\
		&\partial_tE_1
		=
		-j_1						\label{eq:ampere1-1.5}\\
		&\partial_tE_2+\partial_xB
		=
		-j_2						\label{eq:ampere2-1.5}\\
		&\partial_xE_1
		=
		\rho						\label{eq:gauss-1.5}\\
		&\partial_tB
		=
		-\partial_xE_2					\label{eq:faraday-1.5}
		\end{align}
		\end{subequations}
where $\rho$ and $j_1,j_2$ are defined by \eqref{eq:rho} and \eqref{eq:j}.

\subsubsection{Equilibrium}
In \autoref{sec:examples} we prove the existence of equilibria $f^{0,\pm}(x,\+v)$,  which can moreover be written as functions of the energy and momentum
	\begin{equation}\label{eq:jeans theorem ansatz}
	f^{0,\pm}(x,\+v)=\mu^\pm(e^\pm,p^\pm)
	\end{equation}
as in \eqref{eq:jeans}, where the energy and momentum are defined as:
	\begin{equation}\label{eq:constants-1.5}
	e^\pm
	=
	\left<\+v\right>\pm\phi^0(x)\pm\phi^{ext}(x),\qquad
	p^\pm
	=
	v_2\pm\psi^0(x)\pm\psi^{ext}(x)
	\end{equation}
and where $\left<\+v\right>=\sqrt{1+|\+v|^2}$, and $\phi^0$ and $\psi^0$ are the equilibrium electric and magnetic potentials (both scalar, in this case), respectively:
	\begin{equation}\label{eq:potentials-1.5}
	\partial_x\phi^0
	=
	-E_1^0,
	\qquad
	\partial_x\psi^0
	=
	B^0
	\end{equation}
and similarly $\phi^{ext}$ and $\psi^{ext}$ are external electric and magnetic potentials that give rise to external fields $E_1^{ext}$ and $B^{ext}$. It is a straightforward calculation to verify that $e^\pm$ and $p^\pm$ are conserved quantities of the Vlasov flow, i.e. that $\D e^\pm=\D p^\pm=0$, where the operators $\D$ are defined below, \eqref{eq:D}. 

\subsubsection{Linearisation}
Let us discuss the linearisation of  \eqref{eq:rvm-1.5} about a steady-state solution $(f^{0,\pm},\mathbf{E}^0,\mathbf{B}^0)$. Using ansatz \eqref{ansatz} and Jeans' theorem \eqref{eq:jeans}, together with \eqref{eq:constants-1.5} and \eqref{eq:potentials-1.5} the linearised system becomes:
		\begin{subequations}\label{RVM 1.5D linearised}
		\begin{align}
		&(\lambda+\oper{D}_\pm)f^\pm=\mp\mu_e^\pm\hat{v}_1E_1\pm\mu_p^\pm\hat{v}_1 B\mp(\mu_e^\pm\hat{v}_2+\mu_p^\pm)E_2						\label{RVM 1.5D linearised1}\\
		&\lambda E_1
		=
		-j_1						\label{RVM 1.5D linearised2}\\
		&\lambda E_2+\partial_xB
		=
		-j_2						\label{RVM 1.5D linearised3}\\
		&\partial_xE_1
		=
		\rho							\label{RVM 1.5D linearised5}\\
		&\lambda B
		=
		-\partial_xE_2,					\label{RVM 1.5D linearised4}
		\end{align}
		\end{subequations}
where
	\begin{equation}\label{eq:D}
	 \oper{D}_{\pm}=\hat{v}_1\partial_x\pm(E_1^0+E_1^{ext}+\hat{v}_2 (B^0+B^{ext}))\partial_{v_1}\pm(E_2^0-\hat{v}_2 (B^0+B^{ext}))\partial_{v_2}
	\end{equation}
are the linearised transport operators as in \eqref{eq:transport-op}, and
	\begin{equation}\label{eq:rho and j}
	\rho=\int (f^+-f^-)\,d\+v,\qquad j_i=\int\hat{v}_i(f^+-f^-)\,d\+v
	\end{equation}
are the charge and current densities, respectively.

We now construct electric and magnetic potentials $\phi$ and $\psi$, respectively, as in \eqref{eq:potentials-1.5}. Equation \eqref{RVM 1.5D linearised2} implies that $E_1$ has the same spatial support as $j_1$ which is a moment of $f^\pm$ which, in turn, has the same $x$ support as $\mu^\pm$ (this can be seen from \eqref{RVM 1.5D linearised1} for instance). We deduce that $E_1$ is  compactly supported in $\Omega$ and choose an electric potential $\phi\in H^2(\Omega)$ such that $E_1=-\partial_x\phi$ in $\Omega$ and $E_1=0$ outside $\Omega$. Since $E_1$ vanishes at the boundary of $\Omega$, we must impose Neumann boundary conditions on $\phi$, and as $E_1$ depends only on the derivative of $\phi$ we may impose that $\phi$ has mean zero. The magnetic potential $\psi$ is chosen to satisfy $B=\partial_x\psi$ and $E_2=-\lambda\psi$ (this is due to \eqref{RVM 1.5D linearised4}). Then the remaining Maxwell's equations \eqref{RVM 1.5D linearised2}-\eqref{RVM 1.5D linearised5} become
	\begin{subequations}\label{Maxwell's equations 1.5d}
	\begin{align}
	 &\lambda\partial_x\phi=-\lambda E_1=j_1\qquad&\text{ in $\Omega$}				\label{Ampere's equation 2 1.5d}\\
	&(-\partial_x^2+\lambda^2)\psi=-\partial_xB-\lambda E_2=j_2\qquad&\text{ in $\mathbb{R}$} 		\label{Ampere's equation 1 1.5d}\\
	&-\partial_x^2\phi=\partial_xE_1=\rho\qquad&\text{ in }\Omega						\label{Gauss's equation 1.5d}
	\end{align}
	\end{subequations}
where \eqref{Gauss's equation 1.5d} is complemented by the Neumann boundary condition
	\[
	-\partial_x\phi=E_1=0\text{ on }\partial\Omega.
	\]
The linearised Vlasov equations can now be written as
\begin{equation}\label{linearised Vlasov 1.5d potentials}
\begin{aligned}
(\lambda+\D)f^\pm&=\pm\mu^\pm_e\hat{v}_1\partial_x\phi\pm\mu_p^\pm\hat{v}_1\partial_x\psi
\pm\lambda(\mu_e^\pm\hat{v}_2+\mu_p^\pm)\psi\\
&=\pm\mu^\pm_e\D\phi\pm\mu_p^\pm\D\psi\pm\lambda(\mu_e^\pm\hat{v}_2+\mu_p^\pm)\psi 
\end{aligned}
\end{equation}
where we have used the fact  that $\D u=\hat{v}_1\partial_x u$ if $u$ is a function of $x$ only.

Now let us specify the functional spaces that we shall use. For the scalar potential $\phi$ we define the space
	\begin{equation*}
	L^2_0(\Omega):=\left\{f\in L^2(\Omega)\ :\ \int_\Omega f=0\right\}
	\end{equation*}
while for the magnetic potential $\psi$ we simply use $L^2(\mathbb{R})$, the standard space of square integrable functions. We denote by $H^k(\mathbb{R})$  (resp. $H^k(\Omega)$) the usual Sobolev space of functions whose first $k$ derivatives are in $L^2(\mathbb{R})$  (resp. $L^2(\Omega)$). Moreover, we naturally define 
	\begin{equation*}
	H^k_0(\Omega):=\left\{f\in H^k(\Omega)\ :\ \int_\Omega f=0\right\}
	\end{equation*}
and the corresponding version which imposes Neumann boundary conditions
	\begin{equation*}
	H^k_{0,n}(\Omega):=\left\{f\in H^k_0(\Omega)\ :\ \partial_xf=0\text{ on }\partial\Omega\right\}.
	\end{equation*}
Finally, to allow us to consider functions that do not decay at infinity we use the conditions \eqref{eq:weight} and \eqref{eq:int-condition} to define weighted spaces $\Ls_\pm$ as follows: we take the closure of the smooth and compactly supported functions of $(x,\+v)$ (with the $x$ support contained in $\Omega$) under the weighted-$L^2$ norm given by
\begin{align*}
\norm{u}_{\Ls_\pm}^2=\int_{\Omega\times\mathbb{R}^2} w^\pm|u|^2\,d\+vdx
\end{align*}
and we denote the inner product by $\ip{\cdot}{\cdot}_{\Ls_\pm}$. In particular we can view any function $u(x)\in L^2(\Omega) $ or $L^2_0(\Omega)$ as being in $\Ls_\pm$ by considering $u$ as a function of $(x,\+v)$ which does not depend on $\+v$. We can extend this to functions in $L^2(\mathbb{R})$ by multiplying them by the characteristic function of the set $\Omega$, denoted $\mathbbm{1}_\Omega$. Hence the function $\mathbbm{1}_\Omega$ itself may be regarded as an element in $\Ls_\pm$.

\subsubsection{The operators}
Finally, we define the operators used in the statement of \autoref{theorem1}. First define the following projection operators:
\begin{definition}[Projection operators]\label{def:projection operators 1.5d}
We define $\oper{Q}_\pm^0$ to be the orthogonal projection operators in $\oper{L}_\pm$ onto $\ker(\D)$.
\end{definition}
\begin{remark}\label{rem:Q0-does-not-depend-upon-weight}
Although this definition makes reference to the spaces $\Ls_\pm$, the operators $\Q[0]$ do not depend on the exact choice of weight functions $w^\pm$. This may be seen by writing $(\Q[0]h)(x,\+v)$ as the pointwise limit of ergodic averages along trajectories (see \autoref{rem:previous-definitions-of-Q} and \autoref{properties of Q}).
\end{remark}
This allows us to define the following operators acting on functions of the spatial variable $x$, not the full phase-space variables:
	\begin{subequations}\label{the 0 operators 1.5d}
	\begin{flalign}
	\oper{A}_1^0h
	=
	-\partial_x^2h+\int\sum_{\pm}\mu_e^\pm(\oper{Q}_\pm^0-1)h\,d\+v&&
	\end{flalign}
	\begin{flalign}
	\oper{A}_2^0h
	=
	-\partial_x^2h-\left(\sum_{\pm}\int\mu_p^\pm\hat{v}_2\,d\+v\right)h-\int\sum_{\pm}\hat{v}_2\mu_e^\pm\oper{Q}_\pm^0~[\hat{v}_2h]\,d\+v&&
	\end{flalign}
	\begin{flalign}
	\oper{B}^0h
	=
	\left(\int\sum_\pm\mu_p^\pm\,d\+v\right)h+\int\sum_\pm\mu_e^\pm\oper{Q}_\pm^0~[\hat{v}_2h]\,d\+v&&
	\end{flalign}
	\begin{flalign}
	\left(\oper{B}^0\right)^*h
	=
	\left(\int\sum_\pm\mu_p^\pm\,d\+v\right)h+\int\sum_\pm\mu_e^\pm\hat{v}_2\oper{Q}_\pm^0h\,d\+v.&&
	\end{flalign}
	\end{subequations}
Their precise  properties are discussed in \autoref{sec:prop operators 1.5d}. For future reference, we mention the important identity
\begin{equation}\label{eq:perfect deriv}
 \int\left(\mu^\pm_p+\hat{v}_2\mu_e^\pm\right)dv_2=0
\end{equation}
which is due to the fact that $\p{\mu^\pm}{v_2}=\mu_e^\pm\hat{v}_2+\mu_p^\pm$.

\subsection{The cylindrically symmetric case}\label{sec:intro-cyl}
Since notation can be confusing when multiple coordinate systems are in use, we start this section by making clear what our conventions are.

\paragraph{Vector transformations and notational conventions.}
We let $\+{x}=(x,y,z)=x\?e_1+y\?e_2+z\?e_3$ denote the representation of the point $\+{x}\in\mathbb{R}^3$ in terms of the standard Cartesian coordinates. We define the usual cylindrical coordinates as
	\[
	r=\sqrt{x^2+y^2},\qquad
	\theta=\arctan (y/x),\qquad
	z=z.
	\]
and the local cylindrical coordinates as
	\[
	\?e_r=r^{-1}(x,y,0),\qquad
	\?e_\theta=r^{-1}(-y,x,0),\qquad
	\?e_z=(0,0,1).
	\]
By \emph{cylindrically  symmetric} we mean that in what follows no quantity depends upon $\theta$ (which does not imply that the $\theta$ component is zero!). When writing $f(\+{x})$ we mean the value of the function $f$ at the point $\+{x}$ in Cartesian coordinates. We shall often abuse notation and write $f(r,\theta,z)$ to mean the value of $f$ at the point $(r,\theta,z)$ in cylindrical coordinates. A point $\+{v}\in\mathbb{R}^3$ in momentum space shall either be expressed in Cartesian coordinates as
	\[
	\+{v}_{xyz}=(v_x,v_y,v_z)=(\+{v}\cdot\?e_1)\?e_1+(\+{v}\cdot\?e_2)\?e_2+(\+{v}\cdot\?e_3)\mathbf{e}_3
	\]
or in cylindrical coordinates \emph{(depending upon the point $\+{x}\in\mathbb{R}^3$ in physical space)} as
	\[
	\+{v}_{rz\theta}=(v_r,v_\theta,v_z)=(\+{v}\cdot\?e_r)\?e_r+(\+{v}\cdot\?e_\theta)\?e_\theta+(\+{v}\cdot\?{e}_z)\?e_z.
	\]
However we shall not be too pedantic about this notation, and shall use $\+v$ (rather than $\+{v}_{xyz}$ or $\+{v}_{rz\theta}$) when there's no reason for confusion.

A vector-valued function $\mathbf{F}$ shall be understood to be represented in Cartesian coordinates. That is, unless otherwise said, $\mathbf{F}=(F_x,F_y,F_z)=F_x\?e_1+F_y\?e_2+F_z\?e_3$.  Its expression in cylindrical coordinates shall typically be written as $\mathbf{F}=F_r\?e_r+F_\theta\?e_\theta+F_z\?e_z$.

\paragraph{Differential operators.}
Partial derivatives in Cartesian coordinates are written as $\partial_{x},\partial_{y}$ and $\partial_z$, while in cylindrical coordinates they are $\partial_r,\partial_\theta$ and $\partial_z$. They transform in the standard manner. 
It is important to note that since we work in phase space, we shall require derivatives with respect to $\+{v}$ as well.  One important factor appearing in the Vlasov equation is $\+{\hat{v}}_{\+{x}}\cdot\nabla_{\+{x}}$, which transforms as
	\begin{align*}
	(\+{\hat{v}}_{\+{x}}\cdot\nabla_{\+{x}}) h
	&=
	\hat{v}_x\partial_x h+\hat{v}_y\partial_y h+\hat{v}_z\partial_zh\nonumber\\
	&=
	\hat{v}_r\partial_r h+r^{-1}\hat{v}_\theta\partial_\theta h+\hat{v}_z\partial_zh\\
	&=
	\hat{v}_r\partial_rh+r^{-1}\hat{v}_\theta(v_\theta\partial_{v_r}h-v_r\partial_{v_\theta}h)+\hat{v}_z\partial_zh.\nonumber
	\end{align*}
However the next term in the Vlasov equation transforms ``neatly'':
	\begin{align*}
	(\mathbf{F}\cdot\nabla_{\+{v}})h
	&=F_x\partial_{v_x}h+F_y\partial_{v_y}h+F_z\partial_{v_z}h\\
	&=(F_x\cos\theta+F_y\sin\theta)\partial_{v_r}h+(-F_x\sin\theta+F_y\cos\theta)\partial_{v_\theta}h+F_z\partial_{v_z}h\\
	&=F_r\partial_{v_r}h+F_\theta\partial_{v_\theta}h+F_z\partial_{v_z}h.
	\end{align*}

\subsubsection{The Lorenz gauge}
As opposed to the system \eqref{RVM 1.5D linearised}, here we do not get a system of scalar equations. We begin with Maxwell's equations. It is well known that there is some freedom in defining the electromagnetic potentials $\varphi$ (we use $\varphi$ in the cylindrically symmetric case rather than $\phi$ to avoid confusion) and $\?{A}$, satisfying
	\begin{equation*} 
	\partial_t\?A+\nabla\varphi=-\mathbf{E},\qquad
	\nabla\+\times\?{A}=\mathbf{B}.
	\end{equation*}

\begin{remark}\label{rek:del operator convention}
Whenever the differential operator $\nabla$ appears without any subscript, it is understood to be $\nabla_{\+{x}}$, that is, the operator $(\partial_x,\partial_y,\partial_z)$ acting on functions of the spatial variable in Cartesian coordinates. The same holds for the Laplacian $\Delta$.
\end{remark}

We choose to impose the \emph{Lorenz gauge} $\nabla\cdot\?A+\frac{\partial\varphi}{\partial t}=0$, hence transforming Maxwell's equations into the hyperbolic system
	\begin{subequations}\label{eq:maxwell-lorenz}
	\begin{align}
	&\frac{\partial^2}{\partial t^2}\varphi-\Delta\varphi=\rho,\label{eq:maxwell-lorenz-charge}\\
	&\frac{\partial^2}{\partial t^2}\?A-\+\Delta\?A=\?j.\label{eq:maxwell-lorenz-current}
	\end{align}
	\end{subequations}
We remark that this is not unique to the cylindrically symmetric case, and the expressions above are written in Cartesian coordinates.

\subsubsection{Equilibrium and the linearised system}
We define the steady-state potentials $\varphi^0:\mathbb R^3\to\mathbb R$ and $\?{A}^0:\mathbb R^3\to\mathbb R^3$ as
	\begin{equation*}\label{eq:potentials-3}
	\nabla\varphi^0=-\mathbf{E}^0,\qquad
	\nabla\times\?A^0=\mathbf{B}^0
	\end{equation*}
which become
	\begin{equation}\label{eq:potentials-32}
	\mathbf{E}^0=-\partial_r\varphi^0\mathbf{e}_r-\partial_z\varphi^0\mathbf{e}_z,\qquad
	\mathbf{B}^0=-\partial_zA_\theta^0\mathbf{e}_r+\frac{1}{r}\partial_r(rA_\theta^0)\mathbf{e}_z.
	\end{equation}
The energy and momentum may be defined (analogously to \eqref{eq:constants-1.5}) as
	\begin{equation}\label{eq:constants-3}
	\begin{split}
	&e^\pm_{cyl}
	=
	\left<\+v\right>\pm\varphi^0(r,z)\pm\varphi^{ext}(r,z),\\
	&p^\pm_{cyl}
	=
	r(v_\theta\pm A_\theta^0(r,z)\pm A_\theta^{ext}(r,z)),
	\end{split}
	\end{equation}
where we remind that $\left<\+v\right>=\sqrt{1+|{\+{v}}_{xyz}|^2}$. It is straightforward to verify that they are indeed conserved along the Vlasov flow (which is given by the integral curves of the differential operators $\Dt$, defined in \eqref{eq:dt} below). To maintain simple notation we won't insist on writing the \emph{cyl} subscript where it is clear which energy and momentum are meant. The external fields are also assumed to be cylindrically symmetric and their potentials satisfy equations analogous to \eqref{eq:potentials-32}. We recall \eqref{eq:jeans}, namely that any equilibrium is assumed to be of the form
	\begin{equation*}
	f^{0,\pm}(\+x,\+v)=\mu^\pm(e^\pm,p^\pm).
	\end{equation*}

Considering the Lorenz gauge, and applying the ansatz \eqref{ansatz} and Jeans' theorem \eqref{eq:jeans} the linearisation of the RVM system about a steady-state solution $(f^{0,\pm},\mathbf{E}^0,\mathbf{B}^0)$ is
		\begin{subequations}\label{RVM 3D linearised}
		\begin{align}
&(\lambda+\Dt)f^\pm=\pm(\lambda+\Dt)(\mu_e^\pm\varphi+r\mu_p^\pm(\?A\cdot\?e_\theta))\pm\lambda\mu_e^\pm(-\varphi+\?A\cdot\+{\hat{v}})					\label{RVM 3D linearised1}\\
		&\lambda^2\varphi-\Delta\varphi=\int(f^+-f^-)\, d\+v						\label{RVM 3D linearised2}\\
		&\lambda^2\?A-\+\Delta\?A=\int(f^+-f^-)\+{\hat{v}}\, d\+v						\label{RVM 3D linearised3}
		\end{align}
		\end{subequations}
where
	\begin{equation}\label{eq:dt}
	\begin{aligned}
	 \Dt&={\+{\hat{v}}}_{xyz}\cdot \nabla_{\+x}\pm(\?E^0+\?E^{ext}+\+{\hat{v}}_{xyz}\times(\?B^0+\?B^{ext}))\cdot\nabla_{\+v}\\
	 &=\hat{v}_r\partial_r+\hat{v}_z\partial_z+(\pm E^0_r\pm E^{ext}_r\pm\hat{v}_\theta(B^0_z+B^{ext}_z)+r^{-1}\hat{v}_\theta v_\theta)\partial_{v_r}\\
	 &\quad+(\pm\hat{v}_z(B^0_r+B^{ext}_r)\mp\hat{v}_r(B^0_z+B^{ext}_z)+r^{-1}\hat{v}_\theta v_r)\partial_{v_\theta}\\
	 &\quad\pm(E^0_z+E^{ext}_z+\hat{v}_\theta(B^0_r+B^{ext}_r))\partial_{v_z}
	\end{aligned}	
	\end{equation}

are the linearised transport operators. The Lorenz gauge condition under the growing mode ansatz is
	\begin{equation}\label{eq:lin-lorenz-gauge}
	\nabla\cdot\?A+\lambda\varphi=0.
	\end{equation}

\subsubsection{Functional spaces}
Even more so than in the $1.5d$ case, choosing convenient functional spaces is crucial, due to the singular nature of the correspondence between Cartesian and cylindrical coordinates. We define
	\begin{quote}
	$L^2_{cyl}(\mathbb{R}^3)=$  the smallest closed subspace of $L^2(\mathbb{R}^3)$ comprised of functions  which have cylindrical symmetry.
	\end{quote}
A short computation using cylindrical coordinates shows that the decomposition $L^2(\mathbb{R}^3)=L^2_{cyl}(\mathbb{R}^3)\oplus (L^2_{cyl}(\mathbb{R}^3))^\perp$ reduces the Laplacian. This means that the Laplacian commutes with the orthogonal projection of $L^2(\mathbb{R}^3)$ onto $L^2_{cyl}(\mathbb{R}^3)$. Hence the Laplacian is decomposed as
\begin{equation*}
\Delta=\Delta_{cyl}+\Delta_{{cyl}^\perp}.
\end{equation*}
As we have no use for $(L^2_{cyl}(\mathbb{R}^3))^\perp$ we shall abuse notation slightly and denote $\Delta_{cyl}$ as simply $\Delta$. We now consider vector valued functions \[\?A\in L^2_{cyl}(\mathbb{R}^3;\mathbb{R}^3):=(L^2_{cyl}(\mathbb{R}^3))^3.\] We decompose such functions as
\begin{equation}\label{eq:A-decompose}
\begin{aligned}
\?A&=(\?A\cdot\?e_\theta)\?e_\theta+((\?A\cdot\?e_r)\?e_r+(\?A\cdot\?e_z)\?e_z)\\
&=\?A_\theta+\?A_{rz}\in L^2_{\theta}(\mathbb{R}^3;\mathbb{R}^3)\oplus L^2_{rz}(\mathbb{R}^3;\mathbb{R}^3).
\end{aligned}
\end{equation}
By computing with cylindrical coordinates 
we once again discover that this reduces the vector Laplacian $\+\Delta$ on $L^2_{cyl}(\mathbb{R}^3;\mathbb{R}^3)$. Note that this reduction does not occur for $\+\Delta$ on $L^2(\mathbb{R}^3;\mathbb{R}^3)$ (i.e. without the cylindrical symmetry). 

We further define the corresponding Sobolev spaces $H^k_{cyl}(\mathbb{R}^3)$,$H^k_{\theta}(\mathbb{R}^3;\mathbb{R}^3)$, $H^k_{rz}(\mathbb{R}^3;\mathbb{R}^3)$ of functions whose first $k$ weak derivatives lie in $L^2_{cyl}(\mathbb{R}^3)$, $L^2_{\theta}(\mathbb{R}^3;\mathbb{R}^3)$ and $L^2_{rz}(\mathbb{R}^3;\mathbb{R}^3)$, respectively. Note that, because of the reductions above, $\Delta$ is self-adjoint on $L^2_{cyl}(\mathbb{R}^3)$ with domain $H^2_{cyl}(\mathbb{R}^3)$ and $\+\Delta$ is self-adjoint on each of $L^2_{\theta}(\mathbb{R}^3;\mathbb{R}^3)$ and $L^2_{rz}(\mathbb{R}^3;\mathbb{R}^3)$ with domains $H^2_{\theta}(\mathbb{R}^3;\mathbb{R}^3)$ and $H^2_{rz}(\mathbb{R}^3;\mathbb{R}^3)$, respectively.

As in the $1.5d$ case we shall require certain weighted spaces $\Ns_\pm$ that allow us to include functions that do not decay. We define $\Ns_\pm$ as the closure of the smooth compactly supported functions $u:\mathbb{R}^3\times\mathbb{R}^3\to\mathbb{R}$ which are cylindrically symmetric in the $\+x$ variable, and have $\+x$-support contained in $\Omega$, under the norms
\begin{equation*}
\norm{u}_{\Ns_\pm}=\int_{\mathbb{R}^3\times\Omega} w^\pm|u|^2\,d\+vd\+x,
\end{equation*}
where the weight functions $w^\pm$ are the ones introduced in \eqref{eq:weight}.
\subsubsection{The operators}
We now define the operators used in the statement of \autoref{theorem2}. As in the $1.5d$ case we shall require the following definition of projection operators:
\begin{definition}[Projection operators]
\label{def:projection operators 3d}
We define $\Qt[0]$ to be the orthogonal projection operators in $\Ns_\pm$ onto $\ker(\Dt)$.
\end{definition}
As in the 1.5d case, the operators $\Qt[0]$ do not depend upon the exact choice of weights $w^\pm$.
Now we are ready to define the operators of the cylindrically symmetric case. For brevity, given $\+{\hat{v}}=(\hat{v}_r,\hat{v}_\theta,\hat{v}_z)$, we define $\+{\hat{v}}_{\theta}=\hat{v}_\theta \?e_\theta$ and $\+{\hat{v}}_{rz}=\hat{v}_r\?e_r+\hat{v}_z\?e_z$. All operators act on functions of the spatial variables only: the operator $\widetilde{\oper{A}}_1^0$ acts on functions in $L^2_{cyl}(\mathbb{R}^3)$, $\widetilde{\oper{A}}_2^0$ on functions in $L^2_\theta(\mathbb{R}^3;\mathbb{R}^3)
$, $\widetilde{\oper{A}}_3^0$ on functions in $L^2_{rz}(\mathbb{R}^3;\mathbb{R}^3)
$, and $\widetilde{\oper{B}}_1^0$ on functions in $L^2_\theta(\mathbb{R}^3;\mathbb{R}^3)
$ with range $L^2_{cyl}(\mathbb{R}^3)$. We have:
\begin{subequations}\label{eq:the 0 operators 3d}
	\begin{flalign}
	\widetilde{\oper{A}}_1^0 h
	=
	-\Delta h
	+
	\int\sum_{\pm}\mu_e^\pm(\Qt[0]-1) h\,d\+v&&
	\end{flalign}
	\begin{flalign}
	\widetilde{\oper{A}}_2^0 \+h
	=
	-\+\Delta \+h
	-
	\left(r\int\sum_{\pm}\mu_p^\pm{\hat{v}}_\theta\,d\+v\right)\+h
	-
	\int\sum_{\pm}\+{\hat{v}}_\theta\mu_e^\pm\Qt[0][\+h\cdot\+{\hat{v}}_\theta]\,d\+v&&
	\end{flalign}
	\begin{flalign}
	\widetilde{\oper{A}}_3^0\+h
	=
	-\+\Delta\+h
	-
	\int\sum_{\pm}\+{\hat{v}}_{rz}\mu_e^\pm\Qt[0][\+h\cdot\+{\hat{v}}_{rz}]\,d\+v&&
	\end{flalign}
	\begin{flalign}
	\widetilde{\oper{B}}_1^0\+h
	=
	\int\sum_{\pm}\mu_e^\pm(\Qt[0]-1)[\+h\cdot\+{\hat{v}}_\theta]\,d\+v&&
	\end{flalign}
	\begin{flalign}
	(\widetilde{\oper{B}}_1^0)^*h
	=
	\int\sum_{\pm}\mu_e^\pm\+{\hat{v}}_\theta(\Qt[0]-1)h\,d\+v&&
	\end{flalign}
\end{subequations}
The precise properties of these operators are discussed in \autoref{sec:prop operators 3d}. We also mention an identity analogous to \eqref{eq:perfect deriv}
\begin{equation}\label{eq:perfect deriv 3d}
\int (r\mu_p^\pm+\hat{v}_\theta\mu^\pm_e)\,d\+v=0
\end{equation}
which is due to the integrand being a perfect derivative: $\frac{\partial\mu^\pm}{\partial v_\theta}=r\mu_p^\pm+\hat{v}_\theta\mu_e^\pm$. 
\subsection{Organization of the paper}
In \autoref{sec:background} we provide some necessary background, including the crucial result on approximating spectra found in \cite{Ben-Artzi2013e}. Then we treat the two problems -- the $1.5d$ and $3d$ cases -- in parallel: in \autoref{sec:equiv} we formulate the two problems as an equivalent family of self-adjoint problems which we then solve in \autoref{sec:solving}. The proofs of the main theorems are concluded in \autoref{sec:existence}. In \autoref{sec:properties} we provide the rigorous treatment of the various operators appearing throughout the paper, and in \autoref{sec:examples} we show that there exist nontrivial equilibria.

\section{Background, Definitions and Notation}\label{sec:background}
In this section we remind the reader of the various notions of convergence in Hilbert spaces in order to avoid confusion. For a Hilbert space $\spac{H}$ we denote its norm and inner product by $\norm{\cdot}_{\spac{H}}$ and $\ip{\cdot}{\cdot}_{\spac{H}}$, respectively. When there is no ambiguity we drop the subscript. We denote the set of bounded linear operators from a Hilbert space $\spac{H}$ to a Hilbert space $\spac{G}$ as $\bounded{\spac{H},\spac{G}}$, and when  $\spac{H}=\spac{G}$ we simply write $\bounded{\spac{H}}$. The operator norm is denoted $\norm{\cdot}_{\spac{H}\to\spac{G}}$, where, again, when there is no ambiguity we may drop the subscript.
\begin{definition}[Convergence in $\bounded{\spac{H},\spac{G}}$]
Let $\oper{T},\oper{T}_n\in\bounded{\spac{H},\spac{G}}$, where $n\in\mathbb{N}$.
\begin{enumerate}[(a)]
\item We say that the sequence $\oper{T}_n$ converges to $\oper{T}$ in norm (or uniformly) as $n\to\infty$ iff $\norm{\oper{T}_n-\oper{T}}_{\spac{H}\to\spac{G}}\to0$ as $n\to\infty$. In this case we write $\oper{T}_n\to\oper{T}$.
\item We say that the sequence $\oper{T}_n$ converges  to $\oper{T}$ strongly as $n\to\infty$ iff we have the pointwise convergence $\oper{T}_nu\to\oper{T}u$ in $\spac{H}$ for all $u\in\spac{H}$. In this case we write $\oper{T}_n\tos\oper{T}$.
\end{enumerate}
\end{definition}
Now let us recall some important notions related to unbounded self-adjoint operators:

\begin{definition}[Convergence of unbounded operators]
Let $\oper{A}$ and $\oper{A}_n$ be self-adjoint, where $n\in\mathbb{N}$.
  \begin{enumerate}[(a)]
  \item
  We say that the sequence $\oper{A}_n$ converges to $\oper{A}$ in the norm resolvent sense as $n\to\infty$ iff $(\oper{A}_n-z)^{-1}\to(\oper{A}-z)^{-1}$ for any $z\in\mathbb{C}\setminus\mathbb{R}$. In this case we write $\oper{A}_n\tonr\oper{A}$.
  \item
  We say that the sequence $\oper{A}_n$ converges to $\oper{A}$ in the strong resolvent sense as $n\to\infty$ iff $(\oper{A}_n-z)^{-1}\tos(\oper{A}-z)^{-1}$ for any $z\in\mathbb{C}\setminus\mathbb{R}$. In this case we write $\oper{A}_n\tosr\oper{A}$.
  \end{enumerate}
\end{definition}

\begin{remark}
Notice that for any self-adjoint operator $\oper{A}$, the resolvent $(\oper{A}-z)^{-1}$ is a bounded operator for any $z\in\mathbb{C}\setminus\mathbb{R}$.
\end{remark}

\subsection{Basic facts}
The subsequent results will be used throughout the paper without explicit reference.
\begin{lemma}
Let $\spac{H},\spac{G}$ be Banach spaces, $\oper{T},\oper{T}_n\in\bounded{\spac{H},\spac{G}}$ and $u,u_n\in\spac{H}$ where $n\in\mathbb{N}$, and assume that $\oper{T}_n\tos\oper{T}$ and $u_n\to u$. Then $\oper{T}_nu_n\to u$ as $n\to\infty$.
\end{lemma}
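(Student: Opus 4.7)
The plan is to prove the stated convergence $\oper{T}_n u_n \to \oper{T}u$ (the statement as written has a minor typo: the limit should be $\oper{T}u$) by the standard two-term splitting
\[
\oper{T}_n u_n - \oper{T}u = \oper{T}_n(u_n - u) + (\oper{T}_n - \oper{T})u,
\]
and then controlling each piece separately. The second term tends to $0$ in $\spac{G}$ directly from the hypothesis $\oper{T}_n \tos \oper{T}$, since strong convergence is exactly pointwise convergence on every fixed vector (here $u$).

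For the first term, I would use the triangle inequality $\norm{\oper{T}_n(u_n-u)}_{\spac{G}} \le \norm{\oper{T}_n}_{\spac{H}\to\spac{G}}\,\norm{u_n-u}_{\spac{H}}$. Since $\norm{u_n - u}_{\spac{H}}\to 0$ by assumption, it suffices to show that $\sup_n\norm{\oper{T}_n}_{\spac{H}\to\spac{G}} < \infty$. This is where the only nontrivial step enters: the uniform boundedness principle (Banach--Steinhaus). Strong convergence $\oper{T}_n \tos \oper{T}$ implies that for each fixed $v\in\spac{H}$ the sequence $\{\oper{T}_n v\}_n$ is convergent, hence bounded in $\spac{G}$; applying Banach--Steinhaus (for which completeness of $\spac{H}$ is used) yields a uniform operator norm bound $\norm{\oper{T}_n}_{\spac{H}\to\spac{G}} \le C$ independent of $n$. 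Then $\norm{\oper{T}_n(u_n-u)}_{\spac{G}} \le C\norm{u_n-u}_{\spac{H}} \to 0$.

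Combining the two estimates via the triangle inequality finishes the proof: $\norm{\oper{T}_n u_n - \oper{T}u}_{\spac{G}} \to 0$. There is no real obstacle here — the only conceptual point worth naming is that one cannot bound $\norm{\oper{T}_n(u_n-u)}$ using strong convergence alone without first invoking uniform boundedness, since strong convergence does not transfer to convergent inputs without this uniform control. The statement is standard and the proof fits in a few lines.
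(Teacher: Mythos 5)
Your proof is correct and coincides with the paper's own argument: the same splitting $\oper{T}_nu_n-\oper{T}u=\oper{T}_n(u_n-u)+(\oper{T}_n-\oper{T})u$, with the first term controlled by the uniform boundedness principle and the second by strong convergence (and you are right that the stated limit should read $\oper{T}u$). Nothing further is needed.
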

\begin{proof}
We compute
\begin{equation*}
\begin{aligned}
\norm{\oper{T}_nu_n-\oper{T}u}&\le\norm{\oper{T}_n(u_n-u)}+\norm{(\oper{T}_n-\oper{T})u}\\
&\le\left( \sup_{n\in\mathbb{N}}\norm{\oper{T}_n}\right)\norm{u_n-u}+\norm{(\oper{T}_n-\oper{T})u}
\end{aligned}
\end{equation*}
This supremum is finite by the uniform boundedness principle, so the first term converges to zero since $u_n\to u$. The second term converges to zero since $\oper{T}_n\tos\oper{T}$.
\end{proof}
\begin{corollary}
If $\oper{T}_n\tos\oper{T}$ and $\oper{S}_n\tos\oper{S}$ as $n\to\infty$ then $\oper{T}_n\oper{S}_n\tos\oper{T}\oper{S}$ as $n\to\infty$.
\end{corollary}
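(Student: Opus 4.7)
The plan is to reduce the corollary immediately to the preceding lemma by a standard bootstrap: apply the lemma with the sequence $u_n := \oper{S}_n u$ in place of a generic convergent sequence in $\spac{H}$, for an arbitrary fixed $u \in \spac{H}$.

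First I would fix an arbitrary $u \in \spac{H}$ and set $u_n = \oper{S}_n u$. The hypothesis $\oper{S}_n \tos \oper{S}$ means precisely that $u_n \to \oper{S}u$ in the intermediate space (call it $\spac{H}'$, the common codomain of $\oper{S}_n, \oper{S}$ and domain of $\oper{T}_n, \oper{T}$). At this stage the situation is exactly that of the previous lemma applied to $\oper{T}_n, \oper{T} \in \bounded{\spac{H}', \spac{G}}$ with the strongly convergent sequence $u_n \to \oper{S}u$ in $\spac{H}'$.

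Next I would invoke the lemma to conclude $\oper{T}_n u_n \to \oper{T}(\oper{S}u)$ in $\spac{G}$, i.e.\ $\oper{T}_n \oper{S}_n u \to \oper{T}\oper{S}u$. Since $u$ was arbitrary, this is exactly the strong convergence $\oper{T}_n \oper{S}_n \tos \oper{T}\oper{S}$ by definition.

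I do not expect any genuine obstacle here: the only substantive ingredient is the uniform boundedness $\sup_n \norm{\oper{T}_n} < \infty$, which is already exploited inside the preceding lemma via the uniform boundedness principle, and which is also needed implicitly to make sense of the composition $\oper{T}_n \oper{S}_n$ as a bounded operator. The one cosmetic point worth stating explicitly in the write-up is that $\oper{T}_n \oper{S}_n \in \bounded{\spac{H}, \spac{G}}$, which is immediate since the composition of bounded operators is bounded.
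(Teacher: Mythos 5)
Your proposal is correct and is exactly the argument the paper intends: the corollary is stated immediately after the lemma precisely because it follows by applying that lemma to the sequence $u_n=\oper{S}_nu\to\oper{S}u$ for arbitrary fixed $u$. The paper omits the proof, so there is no divergence to report.
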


The following result complements Weyl's theorem (see \cite[IV, Theorem 5.35]{Kato1995}) on the stability of the essential spectrum under a relatively compact perturbation. In our setting we know more about the perturbation than being merely relatively compact.
\begin{lemma}\label{lemma:weyl-finite}
Let $\mathcal{A}=-\Delta+\mathcal{K}: H^2(\mathbb{R}^n)\subset L^2(\mathbb{R}^n)\to L^2(\mathbb{R}^n)$ be a self-adjoint Schr\"odinger operator with $\mathcal{K}\in\spac{B}(L^2(\mathbb{R}^n))$ and $\oper{K}=\oper{K}\oper{P}$ where $\oper{P}:L^2(\mathbb{R}^n)\to L^2(\mathbb{R}^n)$ is the multiplication operator by the characteristic function $\mathbbm{1}_{\Omega}$ of some bounded domain $\Omega\subset\mathbb{R}^n$. Then $\mathcal{A}$ has a finite number of negative eigenvalues.
\end{lemma}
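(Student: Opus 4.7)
The plan is to use the min-max principle to dominate the negative spectrum of $\oper{A}$ by that of a genuine Schr\"odinger operator with a bounded, compactly supported, non-positive scalar potential, and then invoke a classical finiteness result.

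First I would exploit the self-adjointness: from $\oper{K}=\oper{K}\oper{P}$ and $\oper{K}=\oper{K}^*$, taking adjoints gives $\oper{K}=\oper{P}\oper{K}$, so $\oper{K}=\oper{P}\oper{K}\oper{P}$. In particular $\oper{K}$ annihilates functions supported off $\Omega$ and has range supported in $\Omega$, which yields for every $u\in L^2(\mathbb{R}^n)$ the key inequality
\[
\ip{u}{\oper{K}u}=\ip{\mathbbm{1}_\Omega u}{\oper{K}(\mathbbm{1}_\Omega u)}\geq -\norm{\oper{K}}\,\norm{\mathbbm{1}_\Omega u}_{L^2}^2.
\]

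Next I would identify the essential spectrum via Weyl. The composition $\oper{K}(-\Delta+1)^{-1}=\oper{K}\oper{P}(-\Delta+1)^{-1}$ is compact: a bounded sequence in $L^2(\mathbb{R}^n)$ is sent by $(-\Delta+1)^{-1}$ into a bounded set of $H^2(\mathbb{R}^n)$, whose restriction to $\Omega$ lies in a bounded subset of $H^2(\Omega)$ and hence, by Rellich's theorem, has a subsequence converging in $L^2(\Omega)$; post-composing with the bounded operator $\oper{K}$ preserves compactness. Weyl's theorem then gives $\sigma_{\mathrm{ess}}(\oper{A})=\sigma_{\mathrm{ess}}(-\Delta)=[0,\infty)$. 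Since $\oper{A}\geq-\norm{\oper{K}}$ is bounded below, the min-max principle yields
\[
\negeig(\oper{A})=\sup\bigl\{\dim V:\ V\subset H^1(\mathbb{R}^n),\ \ip{u}{\oper{A}u}<0\ \forall\,0\neq u\in V\bigr\}.
\]
Combining with the key inequality, any such $u$ satisfies
\[
0>\ip{u}{\oper{A}u}\geq \norm{\nabla u}_{L^2}^2-\norm{\oper{K}}\,\norm{\mathbbm{1}_\Omega u}_{L^2}^2=\ip{u}{\widetilde{\oper{A}}u},
\]
where $\widetilde{\oper{A}}:=-\Delta-\norm{\oper{K}}\,\mathbbm{1}_\Omega$. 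Taking suprema on both sides, $\negeig(\oper{A})\leq\negeig(\widetilde{\oper{A}})$.

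Finally, $\widetilde{\oper{A}}$ is a Schr\"odinger operator with a bounded, compactly supported, non-positive potential, which is classically known to possess only finitely many negative bound states. For $n\geq 3$ the Cwikel-Lieb-Rozenblum inequality delivers the quantitative bound $\negeig(\widetilde{\oper{A}})\leq C_n\norm{\oper{K}}^{n/2}|\Omega|$, while for $n=1,2$ the analogous Bargmann- and Solomyak-type estimates apply. Either way the count is finite, and the proof is complete. The main obstacle is the passage from the abstract operator perturbation $\oper{K}$ to a scalar multiplication potential; this is handled in one stroke by the factorisation $\oper{K}=\oper{P}\oper{K}\oper{P}$, after which the problem reduces to a well-studied (and dimension-dependent) Schr\"odinger bound.
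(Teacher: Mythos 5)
Your proposal is correct. It shares with the paper the key reduction: using $\oper{K}=\oper{K}\oper{P}$ (and, as you rightly make explicit, $\oper{K}=\oper{P}\oper{K}\oper{P}$ by self-adjointness, a point the paper leaves implicit) to bound the quadratic form from below by $\norm{\nabla u}^2-\norm{\oper{K}}\norm{\mathbbm{1}_\Omega u}^2$, and then counting the dimension of a maximal negative subspace. Where you diverge is in how finiteness is extracted from this inequality. The paper localizes: it multiplies by a cutoff $\chi\equiv1$ on $\Omega$, absorbs the commutator error, and then appeals to the discreteness of the Laplacian's spectrum on a bounded domain (only finitely many levels below the threshold $\norm{\oper{K}}$) --- essentially a bracketing argument, stated rather informally. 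You instead stay on $\mathbb{R}^n$, dominate $\oper{A}$ by the concrete comparison operator $\widetilde{\oper{A}}=-\Delta-\norm{\oper{K}}\mathbbm{1}_\Omega$ via min--max, and invoke classical eigenvalue-counting bounds (Cwikel--Lieb--Rozenblum for $n\ge3$, Bargmann/Solomyak-type for $n=1,2$). Your route buys a quantitative estimate, e.g. $\negeig(\oper{A})\le C_n\norm{\oper{K}}^{n/2}|\Omega|$ for $n\ge3$, at the price of citing heavier, dimension-dependent machinery, whereas the paper's cutoff argument is more elementary and dimension-independent, though sketchier as written. Two minor remarks: the Weyl step is harmless but not strictly needed, since a finite-dimensional negative spectral subspace already forces the negative spectrum to consist of finitely many eigenvalues; and in your compactness argument it is safer to apply Rellich on a ball containing $\Omega$ (or use a smooth cutoff) rather than on $\Omega$ itself, since an arbitrary bounded domain need not be a Sobolev extension domain --- a purely cosmetic fix.
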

\begin{proof}
We want to show that the set
	\begin{equation*}
	\mathcal N
	:=
	\left\{u\,:\,\left<\oper{A}u,u\right><0\right\}
	=
	\left\{u\,:\,\left<\oper{K}u,u\right><-\norm{\nabla u}^2\right\}
	\end{equation*}
is finite dimensional. Consider the larger set
	\[
	\mathcal N\subset \mathcal N':=\left\{u\,:\,\norm{\oper{K}}\norm{\oper{P}u}^2>\norm{\nabla u}^2\right\}.
	\]
We claim that it is sufficient to restrict to functions $u$ supported in $\Omega$.  Indeed, letting $\chi$ be a non-negative, compactly supported smooth cutoff function which equals $1$ in $\Omega$, we have
\begin{equation*}
\begin{aligned}
\ip{-\Delta u}{u}&=\norm{\nabla u}^2\ge \norm{\chi\nabla u}^2=\norm{[\chi,\nabla]u+\nabla(\chi u)}^2\\
&=\norm{\nabla(\chi u)}^2+\norm{[\chi,\nabla]u}^2+2\ip{[\chi,\nabla]u}{\nabla(\chi u)}\\
&\ge \norm{\nabla(\chi u)}^2-2\norm{\nabla\chi}_{L^\infty}\norm{u}\norm{\nabla(\chi u)}\\
&\ge \frac12\norm{\nabla(\chi u)}^2-C\norm{u}^2
\end{aligned}
\end{equation*}
where $[\chi,\nabla]f=\chi\nabla f-\nabla(\chi f)=(\nabla\chi)f$ is the commutator between $\nabla$ and multiplication by $\chi$. By choosing $\chi$ appropriately, the constant $C$ may be made arbitrarily small.  This proves the claim. Finally, by the boundedness of $\Omega$ the set $\mathcal N'$ is clearly finite dimensional, as there can only be finitely many energy levels below any given threshold in a bounded domain.
\end{proof}

\subsection{Approximating strongly continuous families of unbounded operators}\label{Abstract Section}
Here we summarise the main results of \cite{Ben-Artzi2013e} on properties of approximations of strongly continuous families of unbounded operators. We shall require these results in the sequel. We refer to \cite{Ben-Artzi2013e} for full details, including proofs. Let $\mathfrak{H}=\mathfrak{H}_+\oplus\mathfrak{H}_-$ be a (separable) Hilbert space with inner product $\ip{\cdot}{\cdot}$ and norm $\norm{\cdot}$ and let 
	\begin{equation*}
	\oper{A}^\lambda=
	\begin{bmatrix}
	\mathcal{A}_+^\lambda&0\\
	0&-\mathcal{A}_-^\lambda
	\end{bmatrix}
	\quad\text{and}\quad
	\oper{K}^\lambda=
	\begin{bmatrix}
	\oper{K}^\lambda_{++}&\oper{K}^\lambda_{+-}\\
	\oper{K}^\lambda_{-+}&\oper{K}^\lambda_{--}
	\end{bmatrix},\quad\lambda\in[0,1]
	\end{equation*}
be two families of operators on $\mathfrak{H}$ depending upon the parameter $\lambda\in[0,1]$ (the range $[0,1]$ of values of the parameter is, of course, arbitrary), where the family $\oper{A}^\lambda$ is also assumed to be defined for $\lambda$ in an open neighbourhood $D_0$ of $[0,1]$ in the complex plane. They satisfy:\\

\textbf{i) Sectoriality:} The sesquilinear forms $\sform{a}^\lambda_\pm$ corresponding to $\oper{A}^\lambda_\pm$ are sectorial for $\lambda\in D_0$, symmetric for real $\lambda$, have dense domains $\dom{\sform{a}^\lambda_\pm}$ independent of $\lambda\in D_0$, and $D_0\ni\lambda\mapsto\sform{a}^\lambda_\pm[u,v]$ are holomorphic for any $u,v\in\dom{\sform{a}^\lambda_\pm}$. [In the terminology of \cite{Kato1995}, $\sform{a}^\lambda_\pm$ are  \emph{holomorphic families of type (a)} and $\oper{A}^\lambda$ are  \emph{holomorphic families of type (B)}.] \\

\textbf{ii) Gap:} $\oper{A}^\lambda_\pm>1$ for every $\lambda\in[0,1]$.\\

\textbf{iii) Bounded perturbation:} $\{\oper{K}^\lambda\}_{\lambda\in[0,1]}\subset\bounded{\mathfrak{H}}$ is a symmetric strongly continuous family.\\

\textbf{iv) Compactness:} There exist symmetric operators $\oper{P}_\pm\in\bounded{\mathfrak{H}_\pm}$ which are relatively compact with respect to the forms $\sform{A}^\lambda_\pm$, satisfying $\oper{K}^\lambda=\oper{K}^\lambda\oper{P}$ for all $\lambda\in[0,1]$ where \[\oper{P}=\begin{bmatrix}\oper{P}_+&0\\0&\oper{P}_-\end{bmatrix}.\]\\

Finally, if the family $\oper{A}^\lambda$ does not have a compact resolvent we assume:\\

\textbf{v) Compactification of the resolvent:}
There exist holomorphic forms $\{\sform{w}_\pm^\lambda\}_{\lambda\in D_0}$ of type (a) and associated operators $\{\oper{W}^\lambda_\pm\}_{\lambda\in D_0}$ of type (B) such that  for $\lambda\in[0,1]$, $\oper{W}^\lambda_\pm$ are self-adjoint and non-negative, and if  $\sform{W}^\lambda$ is the form associated with
 \[\oper{W^\lambda}=\begin{bmatrix}\oper{W}_+^\lambda&0\\0&-\oper{W}_-^\lambda\end{bmatrix},\quad \lambda\in D_0,\] 
then $\dom{\sform{W}^\lambda}\cap\dom{\sform{A}_\pm}$ are dense for all $\lambda\in D_0$ and the inclusion $(\dom{\sform{W}^\lambda}\cap\dom{\sform{A}},\norm{\cdot}_{\sform{A}_\varepsilon^\lambda})\to(\mathfrak{H},\norm{\cdot})$ is compact for some $\lambda\in D_0$ and all $\varepsilon>0$, where $\sform{A}_\varepsilon^\lambda$ is the form associated with
	\begin{equation}\label{eq:a-epsilon}
	\oper{A}_\varepsilon^\lambda:=\oper{A}^\lambda+\varepsilon\oper{W}^\lambda,\quad \lambda\in D_0,\ \varepsilon\ge0.\\
	\end{equation}

Define the family of (unbounded) operators $\{\oper{M}^\lambda\}_{\lambda\in[0,1]}$, acting in $\mathfrak{H}$,  as
	\begin{equation*} 
	\oper{M}^\lambda=\oper{A}^\lambda+\oper{K}^\lambda,\quad \lambda\in [0,1].
	\end{equation*}

Our main result in \cite{Ben-Artzi2013e} is:

\begin{theorem}\label{thm:approximation}
Let $\oper{A}_\varepsilon^\lambda$ be as in \eqref{eq:a-epsilon}, and define
	\begin{equation*} 
	\mathcal{M}^\lambda_\varepsilon=\mathcal{A}^\lambda_\varepsilon+\mathcal{K}^\lambda,\quad\lambda\in[0,1].
	\end{equation*}
Let $\{e^\lambda_{\varepsilon,k}\}_{k\in\mathbb{N}}\subset\mathfrak{H}$ be a complete orthonormal set of eigenfunctions of $\mathcal{A}^\lambda_\varepsilon$, let $\mathcal{G}^\lambda_{\varepsilon,n}:\mathfrak{H}\to\mathfrak{H}$ be the orthogonal projection operators onto $\mathrm{span}(e^\lambda_{\varepsilon,1},\dots,e^\lambda_{\varepsilon,n})$ and let ${\oper{M}}^{\lambda}_{\varepsilon,n}$ be the $n$-dimensional operator defined as the restriction of ${\oper{M}}^{\lambda}_{\varepsilon}$ to $\mathcal{G}^\lambda_{\varepsilon,n}(\mathfrak{H})$. Fix $\varepsilon^*>0$, and define the function
\begin{equation*}
\begin{aligned}
 \Sigma:[0,1]\times[0,\varepsilon^*]&\to(\text{subsets of } (-1,1),d_H)\\
\Sigma(\lambda,\varepsilon)&=(-1,1)\cap\mathrm{sp}(\oper{M}^{\lambda}_{\varepsilon})\\
\end{aligned}
\end{equation*}
and for fixed $\varepsilon>0$ the function
\begin{equation*}
\begin{aligned}
 \Sigma_\varepsilon:[0,1]\times\overline{\mathbb{N}}&\to(\text{subsets of } (-1,1),d_H)\\
\Sigma_\varepsilon(\lambda,n)&=(-1,1)\cap\mathrm{sp}({\oper{M}}^{\lambda}_{\varepsilon,n})\\
\end{aligned}
\end{equation*}
where $d_H$ is the Hausdorff distance, $\overline{\mathbb{N}}=\mathbb{N}\cup\{\infty\}$ and where we use the convention that ${\oper{M}}^{\lambda}_{\varepsilon,\infty}:=\oper{M}_\varepsilon^\lambda$. Then $\Sigma$ and $\Sigma_\varepsilon$ are continuous functions of their arguments in the standard topologies on $\mathbb{R}$ (and its subsets) and $\overline{\mathbb{N}}$.
\end{theorem}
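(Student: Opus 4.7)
The plan is to establish continuity in each of the variables $\lambda$, $\varepsilon$, and $n$ separately (at fixed values of the others), and then combine them into the asserted joint continuity. The first step is to observe that inside $(-1,1)$ we deal only with discrete spectrum of finite multiplicity: by the gap assumption (ii) the unperturbed spectra $\mathrm{sp}(\oper{A}^\lambda_\varepsilon)$ avoid $(-1,1)$, and since $\oper{K}^\lambda = \oper{K}^\lambda \oper{P}$ is relatively compact with respect to the forms $\sform{a}^\lambda_\pm$ (assumption (iv)), Weyl's theorem keeps the essential spectrum of $\oper{M}^\lambda_\varepsilon$ outside $(-1,1)$. Thus $\Sigma(\lambda,\varepsilon)$ and $\Sigma_\varepsilon(\lambda,n)$ are finite multi-sets of isolated eigenvalues, and Hausdorff-continuity on $(-1,1)$ is equivalent to the standard notion of continuous dependence of isolated eigenvalues counted with multiplicity.

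For continuity of $\Sigma$ in $(\lambda,\varepsilon)$ I would use resolvent arguments. The type-(B) holomorphy of $\oper{A}^\lambda$ and $\oper{W}^\lambda$ yields norm resolvent continuity of $\oper{A}^\lambda_\varepsilon=\oper{A}^\lambda+\varepsilon\oper{W}^\lambda$ in $\varepsilon$, and strong resolvent continuity in $\lambda$; combined with the strong continuity of $\oper{K}^\lambda$ and the second resolvent identity, $\oper{M}^\lambda_\varepsilon$ depends continuously on $(\lambda,\varepsilon)$ in the strong resolvent sense. Since the perturbations responsible for any eigenvalues inside $(-1,1)$ factor through the relatively compact operator $\oper{P}$, one can upgrade the strong resolvent statement to genuine continuity of each isolated eigenvalue via the standard stability theory for finite systems of eigenvalues (see Kato [Kato1995, Chapter IV]).

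For continuity of $\Sigma_\varepsilon$ in $n$, assumption (v) is the engine. Once $\varepsilon>0$, the inclusion of the form-domain into $\mathfrak{H}$ is compact, so $\oper{A}^\lambda_\varepsilon$ has purely discrete spectrum with eigenfunctions $\{e^\lambda_{\varepsilon,k}\}$ complete in $\mathfrak{H}$ and eigenvalues $|\mu^\lambda_{\varepsilon,k}|\to\infty$; hence the Galerkin projections $\oper{G}^\lambda_{\varepsilon,n}$ converge strongly to the identity as $n\to\infty$. Applying the min-max principle separately on the positive component $\oper{A}^\lambda_{\varepsilon,+}$ and on $-\oper{A}^\lambda_{\varepsilon,-}$, and using that $\oper{K}^\lambda$ is uniformly bounded, the eigenvalues of $\oper{M}^\lambda_{\varepsilon,n}$ inside $(-1,1)$ converge to those of $\oper{M}^\lambda_\varepsilon$. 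The $\lambda$-continuity for each fixed $n$ is then inherited from the holomorphic dependence of the eigenfunctions $e^\lambda_{\varepsilon,k}$ on $\lambda$ granted by analytic perturbation theory for type-(B) families, which makes the finite matrix representations of $\oper{M}^\lambda_{\varepsilon,n}$ continuous in $\lambda$.

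The main obstacle I anticipate is coordinating the $n\to\infty$ limit with the dependence on $\lambda$: the Galerkin subspace $\oper{G}^\lambda_{\varepsilon,n}(\mathfrak{H})$ itself moves with $\lambda$, so one cannot merely min-max over a fixed subspace. The remedy is to extract locally uniform resolvent bounds from the holomorphic type-(B) structure of $\oper{A}^\lambda$, and then promote pointwise convergence of spectra to continuity jointly in $(\lambda,n)$ (and likewise in $\varepsilon$) by combining the uniform resolvent estimates with the compactness of $\oper{P}$. Once this uniformity is secured, both $\Sigma$ and $\Sigma_\varepsilon$ satisfy the required Hausdorff-continuity, with the extension to $n=\infty$ being automatic from the convention $\oper{M}^\lambda_{\varepsilon,\infty}=\oper{M}^\lambda_\varepsilon$.
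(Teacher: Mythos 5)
A preliminary remark: the present paper does not prove this theorem at all --- it is quoted as the main result of the companion work \cite{Ben-Artzi2013e}, to which the authors explicitly refer for the proof. So there is no in-paper argument to compare your proposal against; I can only assess it on its own terms.

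On its own terms, the proposal has a genuine gap at the step that is the whole point of the theorem: the passage $n\to\infty$ for the Galerkin truncations. The operator $\oper{M}^\lambda_\varepsilon$ is strongly indefinite (spectrum extending to $+\infty$ and $-\infty$ on either side of the gap $(-1,1)$), so the min--max principle does not govern the eigenvalues lying in the gap, and applying it ``separately on the positive and negative blocks'' is not legitimate once the coupling $\oper{K}^\lambda$ is present. Moreover, strong convergence $\mathcal{G}^\lambda_{\varepsilon,n}\to\mathrm{Id}$ gives only spectral inclusion (true spectrum is approximated by spectra of the truncations); it does not exclude spurious eigenvalues of $\oper{M}^\lambda_{\varepsilon,n}$ appearing inside $(-1,1)$ --- spectral pollution, which is exactly what truncations of operators with a gap in an indefinite spectrum are notorious for, and which the hypotheses (gap, $\oper{K}^\lambda=\oper{K}^\lambda\oper{P}$, relative compactness of $\oper{P}$, truncation in an eigenbasis of $\oper{A}^\lambda_\varepsilon$ so that the compression respects the $\pm$ block structure) are designed to rule out. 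The missing argument is the anti-pollution step: for normalized $u_n\in\mathcal{G}^\lambda_{\varepsilon,n}(\mathfrak{H})$ with $\oper{M}^\lambda_{\varepsilon,n}u_n=z_nu_n$, $z_n$ in a compact subset of $(-1,1)$ but bounded away from $\mathrm{sp}(\oper{M}^\lambda_\varepsilon)$, one must use the uniform form bounds and the compactness supplied by $\oper{P}$ to force $\oper{K}^\lambda\oper{P}u_n\to0$ along a weakly null subsequence, whence $\norm{(\oper{A}^\lambda_\varepsilon-z_n)u_n}\to0$, contradicting $\oper{A}^\lambda_\pm>1$. Your closing sentence gestures at ``uniform resolvent estimates plus compactness of $\oper{P}$'' but the mechanism is never given, and min--max cannot substitute for it. Two further soft spots: the $\lambda$-continuity of $\Sigma_\varepsilon(\cdot,n)$ at fixed $n$ cannot be deduced from ``holomorphic dependence of the eigenfunctions $e^\lambda_{\varepsilon,k}$'', since at eigenvalue crossings of $\oper{A}^\lambda_\varepsilon$ the span of the first $n$ eigenfunctions need not vary continuously, so this step requires its own argument; and $\Sigma(\lambda,\varepsilon)$ need not be a finite set (eigenvalues may accumulate at $\pm1$), which matters when you invoke perturbation theory for finite systems of eigenvalues.
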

We recall the definition of the Hausdorff distance between two bounded sets $\Xi,\Upsilon\subset\mathbb{C}$:
	\[
	d_H(\Xi,\Upsilon)=\max\left(\sup_{y\in \Upsilon}\inf_{x\in \Xi}|x-y|,\sup_{x\in \Xi}\inf_{y\in \Upsilon}|x-y|\right).
	\]

\section{An equivalent problem}\label{sec:equiv}
\subsection{The \texorpdfstring{$1.5d$}{1.5d} case}
\label{Sec:equiv RVM15d}
We will now reduce the linearised Vlasov-Maxwell system \eqref{RVM 1.5D linearised} to a self-adjoint problem in $L^2(\mathbb{R})\times L^2_0(\Omega)$ depending continuously (in the norm resolvent sense) on the parameter $\lambda>0$.

\subsubsection{Inverting the linearised Vlasov equation}
Rearranging the terms in \eqref{linearised Vlasov 1.5d potentials} we obtain
\begin{equation}\label{final linearised RVM}
 (\lambda+\D)f^\pm=\pm(\lambda+\D)(\mu^\pm_e\phi+{\mu_p^\pm}\psi)\pm\lambda\mu^\pm_e(-\phi+\hat{v}_2\psi),
\end{equation}
where we use the fact that $\mu^\pm$ are constant along trajectories of the vector-fields $\D$.
In order to obtain an expression for $f^\pm$ in terms of the potentials $\phi,\psi$ we invert the operator $(\lambda+\D)$ and to do this we must study the operators $\D$.
\begin{lemma}\label{properties of D}
The operators $\D$ on $\Ls_\pm$ satisfy:
\begin{enumerate}[(a)]
\item $\D$ are skew-adjoint and the resolvents $(\lambda+\D)^{-1}$ are bounded linear operators for $\operatorname{Re}\lambda\ne0$ with norm bounded by $1/|\operatorname{Re}\lambda|$.
\item $\D$ flip parity with respect to the variable $v_1$, i.e. if $h(x,v_1,v_2)\in\dom{\D}$ is an even function of $v_1$ then $\D h$ is an odd function of $v_1$ and vice versa.
\item For real $\lambda\ne0$ the resolvents of $\D$ split as follows:
\begin{equation*}
(\lambda+\D)^{-1}=\lambda(\lambda^2-\D^2)^{-1}-\D(\lambda^2-\D^2)^{-1}
\end{equation*}
where the first part is symmetric and preserves parity with respect to $v_1$, and the second part is skew-symmetric and inverts parity with respect to $v_1$.
\end{enumerate}
\end{lemma}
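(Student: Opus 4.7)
My plan is to exploit two structural facts about the equilibrium flow: the phase-space vector field defining $\D$ is divergence-free, and the weight $w^\pm$ is preserved by the flow. Divergence-freeness follows because $\hat v_1$ has no $x$-dependence, and the $\partial_{v_1}$ and $\partial_{v_2}$ coefficients produce cancelling contributions $\pm(B^0+B^{ext})(\partial_{v_1}\hat v_2-\partial_{v_2}\hat v_1)=0$. The preservation of $w^\pm$ is immediate from $\D e^\pm=0$ together with $w^\pm=w^\pm(e^\pm)$. These two facts combine via the product rule to give $\D(uv w^\pm)=\nabla_{(x,\+v)}\cdot(\+V uv w^\pm)$, where $\+V$ is the transport vector field, so integration by parts on smooth compactly supported test functions in $\Omega\times\mathbb{R}^2$ yields the antisymmetry $\ip{\D u}{v}_{\Ls_\pm}=-\ip{u}{\D v}_{\Ls_\pm}$. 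To upgrade antisymmetry to skew-adjointness I would show that the range of $\lambda\pm\D$ is dense for some real $\lambda\neq 0$, by solving $(\lambda+\D)u=g$ via Duhamel along the characteristic flow $\Phi_s$ of $\D$, namely $u(z)=\int_{-\infty}^0 e^{\lambda s}g(\Phi_s(z))\,ds$; a Cauchy--Schwarz estimate combined with the measure-preservation (divergence-free flow) and the flow-invariance of $w^\pm$ yields simultaneously the $\Ls_\pm$ resolvent bound $\|u\|_{\Ls_\pm}\leq |\operatorname{Re}\lambda|^{-1}\|g\|_{\Ls_\pm}$ and the denseness of the range.

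\textbf{Part (b): parity.} Let $P\colon h(x,v_1,v_2)\mapsto h(x,-v_1,v_2)$ be the parity operator in $v_1$. I would verify $P\D+\D P=0$ by a direct term-by-term inspection: $\hat v_1$ is odd in $v_1$ while $\hat v_2$ and the field quantities $E_i^0, E_i^{ext}, B^0, B^{ext}$ are even (indeed $v_1$-independent). The derivative $\partial_x$ preserves parity, while $\partial_{v_1}$ flips it. Combining the parities of coefficient and derivative in each of the three summands shows each summand is parity-flipping, hence so is $\D$.

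\textbf{Part (c): resolvent decomposition.} From (a), $\D$ is skew-adjoint, so $\D^2$ is self-adjoint and nonpositive; from (b), $P$ anticommutes with $\D$, hence commutes with $\D^2$ and with any Borel function of $\D^2$. For real $\lambda\neq 0$ the resolvent $(\lambda^2-\D^2)^{-1}$ therefore exists as a bounded self-adjoint operator commuting with $P$. The algebraic identity $(\lambda+\D)(\lambda-\D)=\lambda^2-\D^2=(\lambda-\D)(\lambda+\D)$ then gives
\[
(\lambda+\D)^{-1}=(\lambda-\D)(\lambda^2-\D^2)^{-1}=\lambda(\lambda^2-\D^2)^{-1}-\D(\lambda^2-\D^2)^{-1}.
\]
The first summand is a real spectral multiplier of the self-adjoint operator $\D^2$, hence symmetric, and it commutes with $P$, hence preserves parity. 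The second summand equals $\D\cdot g(\D^2)$ with $g(\D^2)$ self-adjoint; since $\D$ is skew-adjoint and commutes with $g(\D^2)$, the product is skew-symmetric, and since $\D$ flips parity while $g(\D^2)$ preserves it, the product flips parity.

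\textbf{Main obstacle.} The genuinely nontrivial step is (a), specifically the upgrade from the symbolic antisymmetry to honest skew-adjointness on the weighted space $\Ls_\pm$. One must ensure that the Duhamel formula produces a legitimate element of $\Ls_\pm$, which in turn requires careful use of the flow-invariance of $w^\pm$ and the unimodularity of the characteristic flow in $(x,\+v)$. Parts (b) and (c) are then essentially algebraic consequences, with (b) verified by inspection and (c) obtained from the standard resolvent identity applied to a skew-adjoint operator.
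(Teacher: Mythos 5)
Your proposal is correct and follows essentially the same route as the paper's own proof: integration by parts (exploiting that the characteristic flow is measure-preserving and leaves $w^\pm$ invariant, since $\D e^\pm=0$) for skew-symmetry, a density-of-range argument to upgrade to skew-adjointness, term-by-term parity inspection for (b), and the functional-calculus identity $\frac{1}{\lambda+\D}=\frac{\lambda}{\lambda^2-\D^2}-\frac{\D}{\lambda^2-\D^2}$ for (c). The only difference in presentation is that where the paper cites a reference for the skew-symmetric-to-skew-adjoint upgrade, you spell out the Duhamel/characteristics argument explicitly --- a standard proof of the same fact, and you correctly flag it as the one nontrivial step.
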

\begin{proof}
Skew-adjointness follows from integration by parts, noting that $w^\pm$ are in the kernels of $\D$ (to be fully precise, only skew-\emph{symmetry} follows. However, skew-adjointness is a simple extension, see e.g. \cite{Tao2011b} and in particular exercise 28 therein). The existence of bounded resolvents follows. The statement regarding parity follows directly from the formulas for $\D$ term by term. Finally, for the last part we use functional calculus formalism to compute
\begin{equation*}
\frac{1}{\lambda+\D}=\frac{\lambda-\D}{\lambda^2-\D^2}=\frac{\lambda}{\lambda^2-\D^2}-\frac{\D}{\lambda^2-\D^2}.
\end{equation*}
As $\D$ are skew-adjoint, $\D^2$ are self-adjoint and hence the first term is self-adjoint and the second skew-adjoint. For the parity properties we note that as $\D$ flip parity, $\D^2$ preserve parity and hence so do $\lambda^2-\D^2$ and their inverses.
\end{proof}
Applying $(\lambda+\D)^{-1}$ to \eqref{final linearised RVM} yields,
\begin{equation}\label{f in 1.5d}
f^\pm=\pm\mu^\pm_e\phi\pm{\mu_p^\pm}\psi\pm\lambda(\lambda+\D)^{-1}[\mu_e^\pm(-\phi+\hat{v}_2\psi)].
\end{equation}
Furthermore, using \autoref{properties of D} we split $f^\pm$ into even and odd functions of $v_1$:
\begin{align*}
f^\pm_{ev}&=\pm\mu^\pm_e\phi\pm{\mu_p^\pm}\psi\pm\mu^\pm_e\lambda^2(\lambda^2-\D^2)^{-1}[-\phi+\hat{v}_2\psi]\\
f^\pm_{od}&=\mp\mu^\pm_e\lambda\D(\lambda^2-\D^2)^{-1}[-\phi+\hat{v}_2\psi]
\end{align*}
using the fact that $\phi,\psi$ and $\mu$ are all even functions of $v_1$. For brevity, we define operators $\Q:\Ls_\pm\to\Ls_\pm$ as
	\begin{equation*}
	\Q=\lambda^2(\lambda^2-\D^2)^{-1},\quad\lambda>0.
	\end{equation*}
When $\lambda\to0$ their strong limits exist, and are defined in \autoref{def:projection operators 1.5d} (this convergence is proved in \autoref{properties of Q}).

\begin{remark}\label{rem:previous-definitions-of-Q}
Operators $\Q$ also appeared in the prior works \cite{Lin2007,Lin2008,Ben-Artzi2011,Ben-Artzi2011b}. In each of these $\Q$ were defined as an integrated average over the characteristics of the operators $\D$. In fact, as the Laplace transform of a semigroup is the resolvent of its generator we see that the operators $\Q$ in these prior works have the rule:
\begin{equation*}
\Q h=\int_{-\infty}^0\lambda e^{\lambda s} e^{s\D}h\,ds=\lambda\int^\infty_0e^{-\lambda s}e^{-s\D}h\,ds=\lambda(\lambda+\D)^{-1}h.
\end{equation*}
Here we have defined the operators $\Q$ directly from the resolvents of $\D$ as this makes some of its properties clearer, although both approaches have advantages. In particular we are able to split $\lambda(\D+\lambda)^{-1}$ into symmetric and skew-symmetric parts in \autoref{properties of D} which simplifies some computations.
\end{remark}

\subsubsection{Reformulating Maxwell's equations}
Now we substitute the expressions \eqref{f in 1.5d} into Maxwell's equations \eqref{Maxwell's equations 1.5d}. This shall result in an equivalent system of equations for $\phi$ and $\psi$. Due to the integration $d\+v$ we notice that $f^\pm_{od}$ and $f^\pm_{od}\hat{v}_2$ both integrate to zero, so that $\rho$ and $j_2$ only depend on $f^\pm_{ev}$. 

\begin{remark}
It is important to note that due to the continuity equation it is possible to express either \eqref{Ampere's equation 2 1.5d} or \eqref{Ampere's equation 1 1.5d} using the remaining two equations in \eqref{Maxwell's equations 1.5d}. See \autoref{Recovery of Maxwell's equations 1.5d}.
\end{remark}

\paragraph{Gauss' equation \eqref{Gauss's equation 1.5d}.}
Gauss' equation becomes
	\begin{equation}\label{eq:Gauss reformulated}
	\begin{aligned}
	-\partial_x^2\phi
	&=
	\int(f^+_{ev}-f^-_{ev})\, d\+v\\
	&=\int
\sum_{\pm}\left(\mu_e^\pm\phi+\mu_p^\pm\psi+\Q{}[\mu_e^\pm(-\phi+\hat{v}_2\psi)]\right)
\,d\+v \\
&=\int\sum_{\pm}(\mu_p^\pm+\mu_e^\pm\hat{v}_2)\psi\,d\+v+\int\sum_{\pm}\mu_e^\pm(\Q-1)[-\phi+\hat{v}_2\psi]\,d\+v,
	\end{aligned}
	\end{equation}
where we have pulled $\mu_e^\pm$ outside the application of $\Q$ as they belong to $\ker(\D)$.

\paragraph{Amp\`ere's equation \eqref{Ampere's equation 1 1.5d}.}
Similarly, Amp\`ere's equation becomes
	\begin{equation}\label{eq:Ampere reformulated}
	\begin{aligned}
	(-\partial_x^2+\lambda^2)\psi
	&\!=\!
	\int\hat{v}_2(f^+_{ev}-f^-_{ev})\, d\+v\\
	&\!=\!\int
\sum_{\pm}\hat{v}_2\left(\mu_e^\pm\phi+\mu_p^\pm\psi+\Q{}[\mu_e^\pm(-\phi+\hat{v}_2\psi)]\right)
\,d\+v \\
&\!=\!\int\sum_{\pm}\hat{v}_2(\mu_p^\pm+\mu_e^\pm\hat{v}_2)\psi\,d\+v\\&\qquad+\int\sum_{\pm}\hat{v}_2\mu_e^\pm(\Q-1)[-\phi+\hat{v}_2\psi]\,d\+v.
	\end{aligned}
	\end{equation}

\paragraph{An equivalent formulation.}
We write the two new expressions \eqref{eq:Gauss reformulated} and \eqref{eq:Ampere reformulated} abstractly in the compact form
\begin{equation}\label{Formally definition of M}
\M \begin{bmatrix}
\psi\\\phi
\end{bmatrix}
= \begin{bmatrix}
-\partial_x^2\psi+\lambda^2\psi-j_2\\\partial_x^2\phi+\rho
\end{bmatrix}=\begin{bmatrix}0\\0\end{bmatrix},
\end{equation}
where, for $\lambda>0$,  $\M$ is a self-adjoint matrix of operators mapping $L^2(\mathbb R)\times L^2_0(\Omega)\to L^2(\mathbb R)\times L^2_0(\Omega)$ (see \autoref{lem:prop M}). We claim that this operator may be written either as
	\begin{equation}\label{eq:m lambda 1}
	\M
	=
	\begin{bmatrix}-\partial_x^2+\lambda^2&0\\0&\partial_x^2\end{bmatrix}-\J
	\end{equation}
or, equivalently, as
	\begin{equation}\label{eq:m lambda 2}
	\M
	=
	\begin{bmatrix}
	\oper{A}_2^\lambda&\left(\oper{B}^\lambda\right)^*\\\oper{B}^\lambda&-\oper{A}_1^\lambda
	\end{bmatrix}
	\end{equation}
where the various operators appearing above are given by
	\begin{subequations}\label{eq:operators}
	\begin{flalign}
	\begin{aligned}
		\J \begin{bmatrix}
		h\\g
		\end{bmatrix}
		&=-\left(\sum_{\pm}\int\mu^\pm\frac{1+v_1^2}{\<{\+v}^3}\,d\+v\right)\begin{bmatrix}
		h\\0
		\end{bmatrix}+\\
		&\quad+\sum_{\pm}\int\begin{bmatrix}
		\hat{v}_2\\-1
		\end{bmatrix}
		\mu_e^\pm(\Q-1)\left(\begin{bmatrix}
		\hat{v}_2\\-1
		\end{bmatrix}\cdot\begin{bmatrix}
		h\\g
		\end{bmatrix}\right)\,d\+v
		\end{aligned}\label{eq:J lambda}
	\end{flalign}	
	\begin{flalign}
	\oper{A}_1^\lambda h
	=
	-\partial_x^2 h+\int\sum_{\pm}\mu_e^\pm(\Q-1) h\,d\+v&&
	\end{flalign}
	\begin{flalign}
	\oper{A}_2^\lambda h
	=
	-\partial_x^2 h+\lambda^2 h-\left(\sum_{\pm}\int\mu_p^\pm\hat{v}_2\,d\+v\right) h-\int\sum_{\pm}\hat{v}_2\mu_e^\pm\Q~[\hat{v}_2 h]\,d\+v&&
	\end{flalign}
	\begin{flalign}
	\oper{B}^\lambda h
	=
	\left(\int\sum_\pm\mu_p^\pm\,d\+v\right) h+\int\sum_\pm\mu_e^\pm\Q~[\hat{v}_2 h]\,d\+v&&
	\end{flalign}
	\begin{flalign}
	\left(\oper{B}^\lambda\right)^* h
	=
	\left(\int\sum_\pm\mu_p^\pm\,d\+v\right) h+\int\sum_\pm\mu_e^\pm\hat{v}_2\Q h\,d\+v.&&
	\end{flalign}
	\end{subequations}

\begin{remark}
Though $\lambda>0$ in the foregoing discussion, all operators can be defined for $\lambda=0$, as we have already done for some (see \eqref{the 0 operators 1.5d}).
\end{remark}

The expression \eqref{eq:m lambda 2} is no more than a rewriting of \eqref{eq:Gauss reformulated} and \eqref{eq:Ampere reformulated}. However the expression \eqref{eq:m lambda 1} requires some attention. In particular, to obtain it one has to use \eqref{eq:perfect deriv} as well as the integration by parts
\begin{equation*}
\int\p{\mu^\pm}{v_2}\hat{v}_2\,d\+v=-\int\mu^\pm\p{\hat{v}_2}{v_2}\,d\+v=-\int\mu^\pm\frac{1+v_1^2}{\<{\+v}^3}\,d\+v.
\end{equation*}

The properties of the operators appearing in \eqref{eq:operators} are discussed in details in \autoref{properties of J} and \autoref{lem:prop A}. Let us briefly summarise:
	\begin{itemize}
	\item
	$\oper{A}_1^\lambda:H^2_{n,0}(\Omega)\subset L_0^2(\Omega)\to L_0^2(\Omega)$ is self-adjoint and has a purely discrete spectrum with finitely many negative eigenvalues.
	\item
	$\oper{A}_2^\lambda:H^2(\mathbb{R})\subset L^2(\mathbb{R})\to L^2(\mathbb{R})$ is self-adjoint, has essential spectrum in $[\lambda^2,\infty)$ and  finitely many negative eigenvalues.
	\item
	$\oper{B}^\lambda:L^2(\mathbb{R})\to L^2_0(\Omega)$ is a bounded operator, with bound independent of $\lambda$.
	\item
	$\J:L^2(\mathbb R)\times L^2_0(\Omega)\to L^2(\mathbb R)\times L^2_0(\Omega)$ is a bounded symmetric operator, with bound independent of $\lambda$.
	\end{itemize}

\subsection{The cylindrically symmetric case}
\label{Sec:equiv cyl}
Our approach here is fully analogous to the one presented in \autoref{Sec:equiv RVM15d} hence we shall keep it brief, omitting repetitions as much as possible. For convenience we denote analogous operators by the same letter, but we shall add a \emph{tilde} to any such operator in this section. Hence, e.g. the operators analogous to $\oper{D}_\pm$ shall be denoted $\widetilde{\oper{D}}_{\pm}$.

\subsubsection{Inverting the linearised Vlasov equation}
Recall the linearised Vlasov equation \eqref{RVM 3D linearised1}
\begin{equation*}
(\lambda+\Dt)f^\pm=\pm(\lambda+\Dt)(\mu_e^\pm\varphi+r\mu_p^\pm(\?A\cdot\+e_\theta))\pm\lambda\mu_e^\pm(-\varphi+\?A\cdot\+{\hat{v}}).
\end{equation*}
Inverting, we get the expression
\begin{equation}\label{f in 3d}
f^\pm=\pm\mu_e^\pm\varphi\pm r\mu_p^\pm(\?A\cdot\+e_\theta)\pm\mu_e^\pm\lambda(\lambda+\Dt)^{-1}(-\varphi+\?A\cdot\+{\hat{v}}),
\end{equation}
and, recalling that we only care about the quantity $f^+-f^-$, we write it for future reference:
\begin{equation}\label{eq:density-3d}
f^+-f^-=\sum_\pm\mu_e^\pm\varphi+\sum_\pm r\mu_p^\pm(\?A\cdot\+e_\theta)+\sum_\pm\mu_e^\pm\lambda(\lambda+\Dt)^{-1}(-\varphi+\?A\cdot\+{\hat{v}}).
\end{equation}
\begin{lemma}\label{properties of D 2}
The operators $\Dt$ on $\Ns_\pm$ satisfy:
\begin{enumerate}[(a)]
\item $\Dt$ are skew-adjoint and the resolvents $(\lambda+\Dt)^{-1}$ are bounded linear operators for $\operatorname{Re}\lambda\ne0$ with norm bounded by $1/|\operatorname{Re}\lambda|$.
\item $\Dt$ flip parity with respect to the pair of variables $(v_r,v_z)$, i.e. if $h\in\dom{\Dt}$ is an even function of the pair $(v_r,v_z)$ then $\Dt h$ is an odd function of $(v_r,v_z)$ and vice versa (see  \autoref{rek:parity} below).
\item For real $\lambda\ne0$ the resolvents of $\Dt$ split as follows:
\begin{equation}\label{eq:Dt-split}
(\lambda+\Dt)^{-1}=\lambda(\lambda^2-\Dt^2)^{-1}-\Dt(\lambda^2-\Dt^2)^{-1}
\end{equation}
where the first part is symmetric and preserves parity with respect to $(v_r,v_z)$, and the second part is skew-symmetric and inverts parity with respect to $(v_r,v_z)$.
\end{enumerate}
\end{lemma}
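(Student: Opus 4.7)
The plan is to mirror the proof of \autoref{properties of D} from the $1.5d$ setting, with the additional bookkeeping required by the richer structure of $\widetilde{\oper{D}}_\pm$ as given in \eqref{eq:dt}. The three claims are largely independent modulo part (a).

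For part (a), I would first establish skew-symmetry on a core of smooth, compactly supported, cylindrically symmetric test functions by integration by parts in $d\+x\,d\+v$. The integration by parts works because (i) the weights $w^\pm=c(1+|e^\pm|)^{-\alpha}$ lie in the kernel of $\widetilde{\oper{D}}_\pm$ since $e^\pm$ is a conserved quantity of the Vlasov flow, so no weight derivatives appear; and (ii) the transport vector field in \eqref{eq:dt} is divergence-free in the flat measure $d\+x\,d\+v$, which is the content of the main computational obstacle (see below). Once skew-symmetry is in hand, skew-adjointness follows from the same essential-self-adjointness argument referenced in the proof of \autoref{properties of D}. The resolvent bound $\|(\lambda+\widetilde{\oper{D}}_\pm)^{-1}\|\le 1/|\mathrm{Re}\,\lambda|$ is then standard for skew-adjoint operators.

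For part (b), I would verify term-by-term that each of the five summands in \eqref{eq:dt} reverses sign under the involution $(v_r,v_z)\mapsto(-v_r,-v_z)$. Under this involution, $\hat v_r$ and $\hat v_z$ are odd and $\hat v_\theta$ is even; the derivations $\partial_{v_r}$ and $\partial_{v_z}$ flip parity while $\partial_{v_\theta}$ preserves it; the spatial derivatives $\partial_r,\partial_z$ act trivially on the $\+v$-parity. A short case analysis shows that $\hat v_r\partial_r$, $\hat v_z\partial_z$ and $\pm(E^0_z+\cdots)\partial_{v_z}$ flip parity as odd$\times$flip or even$\times$flip, while the $\partial_{v_r}$ coefficient (even in $(v_r,v_z)$) and the $\partial_{v_\theta}$ coefficient (odd in $(v_r,v_z)$) combine with their derivations to give flip as well.

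Part (c) is a direct functional-calculus consequence of (a) and (b). Since $\widetilde{\oper{D}}_\pm$ is skew-adjoint, $\widetilde{\oper{D}}_\pm^2$ is self-adjoint and non-positive, so $\lambda^2-\widetilde{\oper{D}}_\pm^2$ is strictly positive and boundedly invertible for real $\lambda\ne0$. Writing
\[
(\lambda+\widetilde{\oper{D}}_\pm)^{-1}=(\lambda-\widetilde{\oper{D}}_\pm)(\lambda^2-\widetilde{\oper{D}}_\pm^2)^{-1}=\lambda(\lambda^2-\widetilde{\oper{D}}_\pm^2)^{-1}-\widetilde{\oper{D}}_\pm(\lambda^2-\widetilde{\oper{D}}_\pm^2)^{-1}
\]
identifies the symmetric and skew-symmetric pieces, and the parity statements follow from (b): since $\widetilde{\oper{D}}_\pm$ flips parity, $\widetilde{\oper{D}}_\pm^2$ preserves it, and so do its resolvent and any polynomial thereof.

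The main obstacle is the divergence computation underpinning skew-symmetry in (a). The cross-terms $r^{-1}\hat v_\theta v_\theta\,\partial_{v_r}$ and $r^{-1}\hat v_\theta v_r\,\partial_{v_\theta}$ arising from the curvilinear geometry look at first glance like they might spoil the divergence-free property, and the Lorentz pieces $\hat v_\theta B^0_z\,\partial_{v_r}$ etc.\ need to be checked against their partners. The cleanest way to handle this is to observe that \eqref{eq:dt} is the pullback to cylindrical phase-space coordinates of the Cartesian characteristic field $\+{\hat v}\cdot\nabla_{\+x}\pm(\?E+\+{\hat v}\+\times\?B)\cdot\nabla_{\+v}$, which is manifestly divergence-free (the electric term is independent of $\+v$, and the magnetic term $\+{\hat v}\+\times\?B$ is orthogonal to $\+v$ and hence to its $\+v$-gradient in the required sense). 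Since the divergence is a coordinate-invariant scalar density, this sidesteps any direct cylindrical computation.
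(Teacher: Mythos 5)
Your proposal is correct and follows exactly the route the paper intends: the paper leaves this proof to the reader as being analogous to \autoref{properties of D}, and your plan carries out that analogy — integration by parts with the weights $w^\pm$ in $\ker(\Dt)$ plus divergence-freeness of the characteristic field for (a), a term-by-term parity check of \eqref{eq:dt} under $(v_r,v_z)\mapsto(-v_r,-v_z)$ for (b), and the functional-calculus splitting for (c). The observation that one can verify the divergence-free property in Cartesian coordinates and invoke coordinate invariance is a clean way to dispose of the curvilinear cross-terms and is consistent with the paper's setup.
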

We leave the proof, which is analogous to the proof of \autoref{properties of D}, to the reader.
\begin{remark}\label{rek:parity}
{For a function $h$ expressed in cylindrical coordinates as $h(\+x,v_r,v_z,v_\theta)$, we say that $h$ is an even function of the pair $(v_r,v_z)$ if $h(\+x,v_r,v_z,v_\theta)=h(\+x,-v_r,-v_z,v_\theta)$, where we flip the sign of both variables simultaneously. Note that this is a weaker property than both being an even function of $v_r$ and an even function of $v_z$. Odd functions of $(v_r,v_z)$ are defined similarly.}
\end{remark}
As in the $1.5d$ case, we define averaging operators. However, in this case both the symmetric and skew-symmetric parts are required. The operators $\Qtsym$ and $\Qtskew$ map $\Ns_\pm$ to $\Ns_\pm$ and are defined by the rules
	\begin{align*}
	\Qtsym&=\lambda^2(\lambda^2-\Dt^2)^{-1},\quad\lambda>0\\
	\Qtskew&=-\lambda\Dt(\lambda^2-\Dt^2)^{-1},\quad\lambda>0.
	\end{align*}
Note that by \eqref{eq:Dt-split} we have $\lambda(\lambda+\Dt)^{-1}=\Qtsym + \Qtskew$.
\subsubsection{Reformulating Maxwell's equations}
We now rewrite Maxwell's equations \eqref{RVM 3D linearised2}-\eqref{RVM 3D linearised3} as an equivalent self-adjoint problem using the expression \eqref{eq:density-3d}. We start with \eqref{RVM 3D linearised2}:
	\begin{equation}\label{eq:Gauss reformulated 3d}
	\begin{aligned}
	0&=\lambda^2\varphi-\Delta\varphi-\int(f^+-f^-)\, d\+v\\
	&=\lambda^2\varphi-\Delta\varphi-\int\sum_{\pm}\left(\mu_e^\pm\varphi+ r\mu_p^\pm(\?A\cdot\+e_\theta)+\mu_e^\pm\lambda(\lambda+\Dt)^{-1}(-\varphi+\?A\cdot\+{\hat{v}})\right)\, d\+v
	\end{aligned}
	\end{equation}
where $\varphi\in\spac{H}_\varphi$. Next, the system of equations \eqref{RVM 3D linearised3} becomes
	\begin{equation}\label{eq:Ampere reformulated 3d}
	\begin{aligned}
	0&=\lambda^2\?A-\+\Delta\?A-\int(f^+-f^-)\+{\hat{v}}\, d\+v\\
	&=\lambda^2\?A-\+\Delta\?A-\int\sum_{\pm}\left(\mu_e^\pm\varphi+ r\mu_p^\pm(\?A\cdot\+e_\theta)+\mu_e^\pm\lambda(\lambda+\Dt)^{-1}(-\varphi+\?A\cdot\+{\hat{v}})\right)\+{\hat{v}}\, d\+v,
	\end{aligned}
	\end{equation}
where $\?A=(\?A_\theta,\?A_{rz})\in L^2_{\theta}(\mathbb{R}^3;\mathbb{R}^3)\times L^2_{rz}(\mathbb{R}^3;\mathbb{R}^3)$. 
As in \eqref{Formally definition of M}, we shall write these equations as a single system of the form
\begin{equation}\label{Formally definition of M 2}
\Mt \begin{bmatrix}
\?A_\theta\\\varphi\\\?A_{rz}
\end{bmatrix}
=\begin{bmatrix}0\\0\\0\end{bmatrix},
\end{equation}
which is a self-adjoint operator in $\spac{H}$, see \autoref{lem:prop mt}. In analogy with \eqref{eq:m lambda 2}, we define
	\begin{equation}\label{eq:m lambda 2 3d}
	\Mt
	=
	\begin{bmatrix}
    \widetilde{\oper{A}}_2^\lambda&(\widetilde{\oper{B}}_1^\lambda)^*&(\widetilde{\oper{B}}_2^\lambda)^*\\
    \widetilde{\oper{B}}^\lambda_1&-\widetilde{\oper{A}}_1^\lambda&-(\widetilde{\oper{B}}_3^\lambda)^*\\
    \widetilde{\oper{B}}_2^\lambda&-\widetilde{\oper{B}}_3^\lambda&-\widetilde{\oper{A}}_3^\lambda
   \end{bmatrix}.
	\end{equation}
With $\+{\hat{v}}=(\hat{v}_r,\hat{v}_\theta,\hat{v}_z)$, we recall the notation $\+{\hat{v}}_{\theta}=\hat{v}_\theta\+e_\theta$ and $\+{\hat{v}}_{rz}=\hat{v}_r\+e_r+\hat{v}_z\+e_z$ introduced before. Then the components of $\Mt$ are now given by
	\begin{subequations}\label{eq:operators3d}
	\begin{flalign}
	\widetilde{\oper{A}}_1^\lambda h
	=\!
	-\Delta h+\lambda^2h+\int\sum_{\pm}\mu_e^\pm(\Qtsym-1) h\,d\+v&&
	\end{flalign}
	\begin{flalign}
	\widetilde{\oper{A}}_2^\lambda \+h
	=
	-\+\Delta \+h+\lambda^2 \+h
	\!-\!
	\left(\!r\!\int\sum_{\pm}\mu_p^\pm{\hat{v}}_\theta\,d\+v\right)\!\+h
	-\!
	\int\sum_{\pm}\+{\hat{v}}_\theta\mu_e^\pm\Qtsym~\!\![\+h\cdot\+{\hat{v}}_\theta]\,d\+v&&
	\end{flalign}
	\begin{flalign}
	\widetilde{\oper{A}}_3^\lambda\+h
	=
	-\+\Delta\+h+\lambda^2\+h-\int\sum_{\pm}\+{\hat{v}}_{rz}\mu_e^\pm\Qtsym{}[\+h\cdot\+{\hat{v}}_{rz}]\,d\+v&&
	\end{flalign}
	\begin{flalign}
	\widetilde{\oper{B}}_1^\lambda\+h
	=
	\int\sum_{\pm}\mu_e^\pm(\Qtsym-1)[\+h\cdot\+{\hat{v}}_\theta]\,d\+v&&
	\end{flalign}
	\begin{flalign}
	(\widetilde{\oper{B}}_1^\lambda)^*h
	=
	\int\sum_{\pm}\mu_e^\pm\+{\hat{v}}_\theta(\Qtsym-1)h\,d\+v&&
	\end{flalign}
	\begin{flalign}
	\widetilde{\oper{B}}_2^\lambda\+h
	=
	\int\sum_{\pm}\mu_e^\pm \+{\hat{v}}_{rz}\Qtskew{}[\+h\cdot\+{\hat{v}}_\theta]\,d\+v&&
	\end{flalign}
	\begin{flalign}
	(\widetilde{\oper{B}}_2^\lambda)^*\+h
	=
	-\int\sum_{\pm}\mu_e^\pm\+{\hat{v}}_\theta\Qtskew~[\+{\hat{v}}_{rz}\cdot\+h]\,d\+v&&
	\end{flalign}
	\begin{flalign}
	\widetilde{\oper{B}}_3^\lambda h
	=
	\int\sum_{\pm}\mu_e^\pm\+{\hat{v}}_{rz}\Qtskew~h\,d\+v&&
	\end{flalign}
	\begin{flalign}
	(\widetilde{\oper{B}}_3^\lambda)^* \+h
	=
	-\int\sum_{\pm}\mu_e^\pm\+{\hat{v}}_{rz}\Qtskew~[\+h\cdot\+{\hat{v}}_{rz}]\,d\+v.&&
	\end{flalign}
	\end{subequations}
These are derived from \eqref{eq:Gauss reformulated 3d} and \eqref{eq:Ampere reformulated 3d}, where some terms vanish due to parity in $(v_r,v_z)$, (see \autoref{properties of D 2}(c)). In particular, in every occurrence of $\lambda(\lambda+\Dt)^{-1}=\Qtsym+\Qtskew$, exactly one of these operators vanishes after integration $d\+v$. In addition, we have made use of \eqref{eq:perfect deriv 3d}. We further define an operator $\Jt$ as
\begin{equation*}
\Jt=\begin{bmatrix}
		\lambda^2-\+\Delta&0&0\\
		0&-\lambda^2+\Delta&0\\
		0&0&-\lambda^2+\+\Delta\\
	\end{bmatrix}
	-
	\Mt.
\end{equation*}

Let us briefly discuss these operators in further detail (their precise properties are treated in \autoref{sec:prop op cyl}):
	\begin{itemize}
	\item The operators
	\begin{align*}
	\widetilde{\oper{A}}_1^\lambda&:H^2_{cyl}(\mathbb{R}^3)\subset L^2_{cyl}(\mathbb{R}^3)\to L^2_{cyl}(\mathbb{R}^3)\\
	\widetilde{\oper{A}}_2^\lambda&:H^2_{\theta}(\mathbb{R}^3;\mathbb{R}^3)\subset L^2_{\theta}(\mathbb{R}^3;\mathbb{R}^3)\to L^2_{\theta}(\mathbb{R}^3;\mathbb{R}^3)\\
	\widetilde{\oper{A}}_3^\lambda&:H^2_{rz}(\mathbb{R}^3;\mathbb{R}^3)\subset L^2_{rz}(\mathbb{R}^3;\mathbb{R}^3)\to L^2_{rz}(\mathbb{R}^3;\mathbb{R}^3)
	\end{align*} 
	are self-adjoint, have essential spectrum in $[\lambda^2,\infty)$ and a finite number of eigenvalues in $(-\infty,\lambda^2)$.
	\item The operators
	\begin{align*}
	\widetilde{\oper{B}}_1^\lambda&:L^2_\theta(\mathbb{R}^3;\mathbb{R}^3)\to L^2_{cyl}(\mathbb{R}^3)\\	\widetilde{\oper{B}}_2^\lambda&:L^2_\theta(\mathbb{R}^3;\mathbb{R}^3)\to L^2_{rz}(\mathbb{R}^3;\mathbb{R}^3)\\
	\widetilde{\oper{B}}_3^\lambda&:L^2_{cyl}(\mathbb{R}^3)\to L^2_{rz}(\mathbb{R}^3;\mathbb{R}^3)
	\end{align*}
	are bounded, with bound independent of $\lambda$.
	\item $\Jt : L^2_\theta(\mathbb{R}^3;\mathbb{R}^3)\times L^2_{cyl}(\mathbb{R}^3)\times L^2_{rz}(\mathbb{R}^3;\mathbb{R}^3)\to L^2_\theta(\mathbb{R}^3;\mathbb{R}^3)\times L^2_{cyl}(\mathbb{R}^3)\times L^2_{rz}(\mathbb{R}^3;\mathbb{R}^3)$ is a bounded symmetric operator with bound independent of $\lambda$.
	\end{itemize}

\section{Solving the equivalent problem}\label{sec:solving}
The problem is now reduced to finding some $\lambda\in(0,\infty)$ for which the operators $\M$ (in the $1.5d$ case) and $\Mt$ (in the cylindrically symmetric case) have non-trivial kernels (not the same $\lambda$ in both cases, of course). Our method is to compare their spectrum for $\lambda=0$ and $\lambda$ very large, and use spectral continuity arguments to deduce that as $\lambda$ varies an eigenvalue must cross through $0$ (we shall show that  both operators are self-adjoint, see \autoref{lem:prop M} and \autoref{lem:prop mt} below, hence the spectrum lies on the real axis).

\subsection{The \texorpdfstring{$1.5d$}{1.5d} case}\label{sec:finding kernel 1.5d}

\subsubsection{Continuity of the spectrum at \texorpdfstring{$\lambda=0$}{lambda=0}}\label{subsec:Moving instability criterion}
Recall the condition \eqref{eq:thm1-condition} which we require for instability:
\begin{equation}\label{eigcountcond1}
\negeig(\oper{A}^0_2+(\oper{B}^0)^*(\oper{A}^0_1)^{-1}\oper{B}^0)>\negeig(\oper{A}^0_1).
\end{equation}
We wish to move this condition to values of $\lambda$ greater than $0$:
\begin{lemma}\label{eigcountcond1movedlemma}
Assume that \eqref{eigcountcond1} holds and that zero is  in the  resolvent set of $\oper{A}_1^0$. Then there exists $\lambda_*>0$ such that for all $\lambda\in[0,\lambda_*]$
\begin{equation*} 
\negeig(\oper{A}^\lambda_2+(\oper{B}^\lambda)^*(\oper{A}^\lambda_1)^{-1}\oper{B}^\lambda)>\negeig(\oper{A}^\lambda_1) .
\end{equation*}
\end{lemma}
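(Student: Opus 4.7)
The plan is to show that each side of \eqref{eigcountcond1} is continuous in $\lambda$ at $\lambda = 0$ in a sense strong enough to preserve the strict inequality for small $\lambda > 0$.

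\textbf{Convergence of the building blocks.} First I would establish that $\Q[\lambda] \tos \Q[0]$ on $\Ls_\pm$ as $\lambda \to 0^+$: by functional calculus applied to the skew-adjoint $\D$, this reduces to the pointwise limit $\lambda^2/(\lambda^2 + s^2) \to \mathbbm{1}_{\{s=0\}}$ together with dominated convergence for the spectral measure of $\D$. Combined with the explicit formulas in \eqref{eq:operators} and the integrability bound \eqref{eq:int-condition}, this translates into $\oper{A}_1^\lambda \tonr \oper{A}_1^0$, $\oper{A}_2^\lambda \tonr \oper{A}_2^0$, and $\oper{B}^\lambda \to \oper{B}^0$ in operator norm, with the same statement for $(\oper{B}^\lambda)^*$. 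The key upgrade from strong convergence of $\Q[\lambda]$ to norm convergence of the derived operators exploits that the $\+v$-integrations against $\mu_e^\pm, \mu_p^\pm$ factor through multiplication by $\mathbbm{1}_\Omega$, which is compact in the relevant sense. These are precisely the sort of statements treated in \autoref{sec:properties}.

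\textbf{The left-hand count stabilises.} Since $0 \in \rho(\oper{A}_1^0)$ by hypothesis, norm resolvent convergence provides a threshold $\lambda_* > 0$ such that $0 \in \rho(\oper{A}_1^\lambda)$ for all $\lambda \in [0, \lambda_*]$, and $(\oper{A}_1^\lambda)^{-1} \to (\oper{A}_1^0)^{-1}$ in norm as $\lambda \to 0^+$. Choosing a compact interval $[-M, -\epsilon]$ lying in $\rho(\oper{A}_1^0)$ that encloses all the finitely many (by \autoref{lemma:weyl-finite}) negative eigenvalues of $\oper{A}_1^0$, the corresponding spectral projections converge in norm, so $\negeig(\oper{A}_1^\lambda) = \negeig(\oper{A}_1^0)$ throughout $[0, \lambda_*]$.

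\textbf{The right-hand count does not decrease.} Putting the pieces together, the Schur complement $\oper{S}^\lambda := \oper{A}_2^\lambda + (\oper{B}^\lambda)^* (\oper{A}_1^\lambda)^{-1} \oper{B}^\lambda$ is self-adjoint and converges to $\oper{S}^0$ in the norm resolvent sense on $[0, \lambda_*]$. As $(\oper{B}^\lambda)^*(\oper{A}_1^\lambda)^{-1}\oper{B}^\lambda$ is a relatively compact perturbation of $\oper{A}_2^\lambda$, its essential spectrum is $[\lambda^2, \infty) \subset [0, \infty)$, so the strictly negative eigenvalues of $\oper{S}^0$ are isolated and finite in number. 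By norm resolvent continuity they persist as strictly negative eigenvalues of $\oper{S}^\lambda$ for all sufficiently small $\lambda$; any eigenvalue of $\oper{S}^0$ sitting at $0$ can only leave the count unchanged or contribute additional negative eigenvalues under the perturbation. Hence, after possibly shrinking $\lambda_*$, $\negeig(\oper{S}^\lambda) \geq \negeig(\oper{S}^0)$. Chaining this with the previous step and \eqref{eigcountcond1} gives
\begin{equation*}
\negeig(\oper{S}^\lambda) \geq \negeig(\oper{S}^0) > \negeig(\oper{A}_1^0) = \negeig(\oper{A}_1^\lambda) \qquad \text{for all } \lambda \in [0, \lambda_*],
\end{equation*}
as required.

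\textbf{Expected main obstacle.} The delicate point is the upgrade from the manifestly strong convergence $\Q[\lambda] \tos \Q[0]$ to norm resolvent convergence of the $\oper{A}_j^\lambda$ and norm convergence of $\oper{B}^\lambda$. Without this, one would be stuck with strong resolvent convergence only, under which $\negeig(\oper{A}_1^\lambda)$ could in principle drop as $\lambda \to 0^+$ and the required stabilisation would fail. The feature that saves the day is the compact $x$-support of $\mu^\pm$ (Assumption \eqref{eq:weight} and the surrounding text), enabling a compactness argument that transfers strong to norm convergence on the bounded image.
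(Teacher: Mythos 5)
Your argument follows the paper's proof essentially step for step: invertibility of $\oper{A}_1^\lambda$ for small $\lambda$ from norm resolvent continuity, stability of $\negeig(\oper{A}_1^\lambda)$ since its spectrum is discrete and $0$ stays in the resolvent set, and lower semicontinuity of $\negeig$ of the Schur complement $\oper{A}_2^\lambda+(\oper{B}^\lambda)^*(\oper{A}_1^\lambda)^{-1}\oper{B}^\lambda$, all hinging on the compact $x$-support of the equilibrium via \autoref{lemma:weyl-finite} and \autoref{lem:prop A}. One caveat: your claim that $\oper{B}^\lambda\to\oper{B}^0$ in operator norm is stronger than what holds (the paper only has strong convergence, and multiplication by $\mathbbm{1}_\Omega$ is not itself compact — compactness enters only through $\mathbbm{1}_\Omega$ composed with the resolvent of a Schr\"odinger operator, as in the proof of \autoref{lem:prop A}(b)); but this overstatement is harmless, since strong convergence of the bounded, $\Omega$-supported perturbation combined with that Rellich-type compactness already yields the norm resolvent continuity of the Schur complement that your argument actually uses.
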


\begin{proof}
The proof follows immediately from the following three simple steps:

\paragraph{Step 1. $\oper{A}^\lambda_1$ is invertible for small $\lambda\geq0$.} We know from \autoref{lem:prop A} (below) that $\oper{A}_1^\lambda$ is continuous in the norm resolvent sense and has discrete spectrum. The norm resolvent continuity implies that its spectrum varies continuously in $\lambda$, so as $0$ is not in its spectrum at $\lambda=0$ there exists $\lambda_*$ such that $0$ is not in the spectrum for $0\le\lambda\le\lambda_*$. Hence for all such $\lambda$, $\oper{A}^\lambda_1$ is invertible and the operator $\oper{A}^\lambda_2+(\oper{B}^\lambda)^*(\oper{A}^\lambda_1)^{-1}\oper{B}^\lambda$ is well defined.

\paragraph{Step 2.
$\negeig(\oper{A}_1^{\lambda})=\negeig(\oper{A}_1^0)$ for all $\lambda\in[0,\lambda_*]$.}
The spectrum of $\oper{A}_1^\lambda$ is purely discrete  and $0$ is in its resolvent set. This means that none of its eigenvalues can cross $0$ for small values of $\lambda$.

 \paragraph{Step 3.
 $\negeig(\oper{A}^{\lambda}_2+(\oper{B}^\lambda)^*(\oper{A}_1^{\lambda})^{-1}\oper{B}^{\lambda})\ge\negeig(\oper{A}_2^0+(\oper{B}^0)^*(\oper{A}_1^0)^{-1}\oper{B}^0)$ for all $\lambda\in[0,\lambda_*]$.}
Observe that
	\begin{itemize}
	\item
	$[0,\infty)\ni\lambda\mapsto\oper{A}^{\lambda}_2+(\oper{B}^\lambda)^*(\oper{A}_1^{\lambda})^{-1}\oper{B}^{\lambda}$ is norm resolvent continuous,
	\item
	$\oper{A}^{\lambda}_2+(\oper{B}^\lambda)^*(\oper{A}_1^{\lambda})^{-1}\oper{B}^{\lambda}$ has essential spectrum in $[\lambda^2,\infty)$,
	\item
	$\oper{A}^{\lambda}_2+(\oper{B}^\lambda)^*(\oper{A}_1^{\lambda})^{-1}\oper{B}^{\lambda}$ has finitely many negative eigenvalues.
	\end{itemize}
These statements follow from arguments similar to those appearing in the proof of \autoref{lem:prop A}(a)-(c), the last by the boundedness of the perturbation and the location of the essential spectrum (see  \autoref{lemma:weyl-finite}). Since $0$ is not in the resolvent set at $\lambda=0$ we pick $\sigma<0$ larger than all the (finitely many) negative eigenvalues of $\oper{A}^{0}_2+(\oper{B}^0)^*(\oper{A}_1^{0})^{-1}\oper{B}^{0}$. The continuous dependence of the spectrum (as a set) on the parameter $\lambda$ implies that for small values of $\lambda$ no eigenvalues cross $\sigma$ and the number of negative eigenvalues can only grow as $\lambda$ increases.
\end{proof}

\subsubsection{Truncation}
We follow the plan hinted at in \autoref{Abstract Section}: first we discretise the spectrum, then truncate. The only continuous part in the spectrum of $\M$ is due to $\oper{A}^\lambda_2$, hence we let $W(x)$ be a smooth positive potential function satisfying $W(x)\to\infty$ as $x\to\pm\infty$ which we shall add to $\oper{A}_2^\lambda$. It is well known that the Schr\"odinger operator $-\partial_x^2+W$ on $L^2(\mathbb{R})$ is self-adjoint (on an appropriate domain therein) with compact resolvent (and therefore discrete spectrum). Moreover, $C_0^\infty(\mathbb{R})$ is a core for both $\partial_x^2+W$ and $\partial_x^2$. Thus our approximating operator family is $\{\+{\oper{M}}^\lambda_{\varepsilon}\}_{\lambda\in[\lambda_*,\infty),\varepsilon\in[0,\infty)}$, where 
\begin{equation*}
\+{\oper{M}}^\lambda_\varepsilon
=
\begin{bmatrix}
    \oper{A}^\lambda_{2,\varepsilon}& (\oper{B}^\lambda)^*\\
  \oper{B}^\lambda &-\oper{A}^\lambda_{1}
   \end{bmatrix}
=
\underbrace{\begin{bmatrix}
-\partial_x^2+\varepsilon W&0\\
0&\partial_x^2
\end{bmatrix}
+
\begin{bmatrix}
\lambda^2&0\\
0&0
\end{bmatrix}}_{\+{\oper{A}}_\varepsilon^\lambda}
-\J
\end{equation*}
defined on $L^2(\mathbb{R})\times L^2_0(\Omega)$ and where $\lambda_*$ is as given in \autoref{eigcountcond1movedlemma}. For $\varepsilon>0$ this operator has discrete spectrum. As indicated  in the statement of \autoref{thm:approximation} we define truncated versions using the eigenspaces of the operator $\+{\oper{A}}_\varepsilon^\lambda$. As this operator is diagonal, we can choose the eigenvectors to lie in exactly one of $L^2(\mathbb{R})$ or $L^2_0(\Omega)$. We denote the $n$th truncation, a projection onto an eigenspace of dimension $2n$ consisting of $n$ eigenvectors in each of $L^2(\mathbb{R})$ and $L^2_0(\Omega)$, as $\+{\oper{M}}_{\varepsilon,n}^\lambda$ which is self-adjoint and defined for $\varepsilon>0,\lambda\ge0,n\in\mathbb{N}$. Moreover, the mapping $\lambda\mapsto\mathrm{sp}(\+{\oper{M}}_{\varepsilon,n}^\lambda)$ is continuous (that is, the set of eigenvalues varies continuously). In particular if there are $\lambda_*<\lambda^*$ for which $\negeig(\+{\oper{M}}_{\varepsilon,n}^{\lambda_*})\ne\negeig(\+{\oper{M}}_{\varepsilon,n}^{\lambda^*})$ then there must exist $\lambda_{\varepsilon,n}\in(\lambda_*,\lambda^*)$ for which $0\in\mathrm{sp}(\+{\oper{M}}_{\varepsilon,n}^\lambda)$. We have therefore just proved:
\begin{lemma}\label{spectral continuity argument}
Fix $\varepsilon>0,n\in\mathbb{N}$. Suppose that there exist $0<\lambda_*<\lambda^*<\infty$ such that $\negeig(\+{\oper{M}}_{\varepsilon,n}^{\lambda_*})\ne\negeig(\+{\oper{M}}_{\varepsilon,n}^{\lambda^*})$. Then there is a $\lambda_{\varepsilon,n}\in(\lambda_*,\lambda^*)$ for which $\ker(\+{\oper{M}}^\lambda_{\varepsilon,n})$ is non-trivial.
\end{lemma}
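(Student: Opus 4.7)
The plan is straightforward thanks to the finite-dimensional nature of the truncated operator. First I would observe that for fixed $\varepsilon>0$ and $n\in\mathbb{N}$, $\+{\oper{M}}^\lambda_{\varepsilon,n}$ is a self-adjoint operator acting on a fixed $2n$-dimensional subspace, and that the map $\lambda\mapsto\+{\oper{M}}^\lambda_{\varepsilon,n}$ is continuous in operator norm: the diagonal piece $\+{\oper{A}}_\varepsilon^\lambda$ depends polynomially on $\lambda$, and the perturbation $\J$ depends continuously on $\lambda$ via the strong continuity of the averaging family $\Q$ already established in the preceding sections.

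Next, by the classical theory of eigenvalues of self-adjoint matrices (see e.g.\ \cite{Kato1995}), the $2n$ real eigenvalues of $\+{\oper{M}}^\lambda_{\varepsilon,n}$, counted with multiplicity, can be listed as continuous functions $\mu_1(\lambda)\le\cdots\le\mu_{2n}(\lambda)$ of $\lambda$ on $[\lambda_*,\lambda^*]$.

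Then, since $\negeig(\+{\oper{M}}^\lambda_{\varepsilon,n})=\#\{j:\mu_j(\lambda)<0\}$ is integer valued but takes different values at $\lambda_*$ and $\lambda^*$, at least one of the curves $\mu_j$ must change sign somewhere on $(\lambda_*,\lambda^*)$. Applying the intermediate value theorem to that curve produces some $\lambda_{\varepsilon,n}\in(\lambda_*,\lambda^*)$ with $\mu_j(\lambda_{\varepsilon,n})=0$, so $0\in\mathrm{sp}(\+{\oper{M}}^{\lambda_{\varepsilon,n}}_{\varepsilon,n})$; in finite dimensions this is equivalent to $\ker(\+{\oper{M}}^{\lambda_{\varepsilon,n}}_{\varepsilon,n})\ne\{0\}$.

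There is no serious obstacle in this argument, as the whole proof reduces to the intermediate value theorem applied to continuous eigenvalue curves of a finite-dimensional self-adjoint family. The only point requiring care is the continuity of the eigenvalues in $\lambda$, but this follows routinely from the norm continuity of $\lambda\mapsto\+{\oper{M}}^\lambda_{\varepsilon,n}$ together with, for instance, Weyl's inequality for Hermitian matrices.
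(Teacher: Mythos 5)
Your argument is correct and is essentially the paper's own: the paper likewise relies on self-adjointness of the finite-dimensional truncation, continuity of its eigenvalues in $\lambda$, and the observation that an integer-valued count of negative eigenvalues can only change if an eigenvalue crosses $0$, which in finite dimensions means a non-trivial kernel. Your remark that the truncation subspace is fixed is consistent with the construction, since the $\lambda$-dependence of $\+{\oper{A}}_\varepsilon^\lambda$ is only a shift by $\lambda^2$ in one block and so does not alter its eigenvectors.
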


The next step is thus to establish estimates on $\negeig(\+{\oper{M}}_{\varepsilon,n}^{\lambda})$.

\subsubsection{\texorpdfstring{The spectrum for large $\lambda$}{The spectrum for large lambda}}
We begin by looking at $\negeig(\+{\oper{M}}_{\varepsilon,n}^{\lambda})$ when $\lambda$ is large. This turns out to be relatively simple due to the block form of the untruncated operator.

\begin{lemma}\label{finding lambda upper *}
There is $\lambda^*>0$ such that for all $\lambda\ge\lambda^*$, $\varepsilon>0$, and $n\in\mathbb{N}$, the truncated operator $\M_{\varepsilon,n}$ has spectrum composed of exactly $n$ positive and $n$ negative eigenvalues. In particular $\negeig(\+{\oper{M}}^\lambda_{\varepsilon,n})=n$.
\end{lemma}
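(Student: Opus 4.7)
The plan is to exploit the block structure of $\M_{\varepsilon,n}$ in the chosen truncation basis. Using the first $n$ eigenvectors of $\oper{A}_{2,\varepsilon}^\lambda$ in $L^2(\mathbb{R})$ together with the first $n$ Neumann eigenfunctions of $-\partial_x^2$ on $L^2_0(\Omega)$ as the $2n$-dimensional basis, the compressed operator takes the form
\[
\M_{\varepsilon,n}
\;=\;
\begin{bmatrix} P^\lambda & (B^\lambda)^* \\ B^\lambda & N^\lambda \end{bmatrix},
\]
where $P^\lambda$, $N^\lambda$ and $B^\lambda$ denote the compressions of $\oper{A}_{2,\varepsilon}^\lambda$, $-\oper{A}_1^\lambda$ and $\oper{B}^\lambda$ respectively. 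I will show that for large $\lambda$ the top block is uniformly strictly positive, the bottom block is uniformly strictly negative, and the off-diagonal coupling is dominated by the diagonal scaling. Haynsworth's inertia additivity (Sylvester's law of inertia applied to the block Gaussian factorisation by the Schur complement of $P^\lambda$) will then immediately yield signature $(n,n)$.

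For the top block, using the uniform bound on $\J$ recorded after \eqref{eq:operators}, one has the form estimate $\oper{A}_{2,\varepsilon}^\lambda \ge (\lambda^2 - C) I$, which passes to the compression, giving $P^\lambda \ge (\lambda^2 - C) I$ uniformly in $\varepsilon$ and $n$. For the bottom block, the strategy is to show that $\oper{A}_1^\lambda$ itself is positive on the full space $L^2_0(\Omega)$ for $\lambda$ large. Writing $\oper{A}_1^\lambda = -\partial_x^2 + K_1^\lambda$ with $K_1^\lambda = \int \sum_\pm \mu_e^\pm(\Q - 1)\,d\+v$, the identity $\Q - I = \D^2(\lambda^2 - \D^2)^{-1}$ together with the spectral theorem for the skew-adjoint $\D$ gives $\Q \tos I$ as $\lambda\to\infty$, hence $K_1^\lambda \tos 0$ on $L^2_0(\Omega)$. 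Since $\Omega$ is bounded, the resolvent of the Neumann Laplacian on $L^2_0(\Omega)$ is compact, and the standard fact that a strongly null, uniformly bounded family composed with a compact operator converges in norm yields $\oper{A}_1^\lambda \tonr -\partial_x^2$. The limit operator has purely discrete spectrum $\{\mu_k\}_{k\ge 1} \subset [\mu_1, \infty)$ with $\mu_1 > 0$, so Hausdorff-continuity of the spectrum under norm resolvent convergence produces a threshold $\lambda^*$, independent of $\varepsilon$ and $n$, such that $\oper{A}_1^\lambda \ge \tfrac{\mu_1}{2} I$ for all $\lambda \ge \lambda^*$. Compressions preserve this lower bound, so $N^\lambda \le -\tfrac{\mu_1}{2} I$ on the truncation, uniformly in $n$ and $\varepsilon$.

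With $P^\lambda$ invertible, the congruence
\[
\M_{\varepsilon,n}
\;\sim\;
\begin{bmatrix} P^\lambda & 0 \\ 0 & N^\lambda - B^\lambda (P^\lambda)^{-1}(B^\lambda)^* \end{bmatrix}
\]
implies, by Sylvester's law, that the inertias add. The Schur correction satisfies $\|B^\lambda (P^\lambda)^{-1}(B^\lambda)^*\| \le \|\oper{B}^\lambda\|^2/(\lambda^2 - C) \le C'/\lambda^2$, and is therefore smaller than $\tfrac{\mu_1}{4}$ for $\lambda$ beyond a further $(n,\varepsilon)$-independent threshold. Then $N^\lambda - B^\lambda (P^\lambda)^{-1}(B^\lambda)^* \le -\tfrac{\mu_1}{4} I$ while $P^\lambda > 0$, so the signature equals $(n,0) + (0,n) = (n,n)$, proving both claims of the lemma.

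I expect the main obstacle to be obtaining the threshold $\lambda^*$ uniformly in $n$ and $\varepsilon$. A naive min-max argument applied at the level of the $2n$-dimensional compressions would lose this uniformity, because $K_1^\lambda \to 0$ only strongly and not in norm. Routing through the norm resolvent convergence of the full unbounded operator $\oper{A}_1^\lambda$ --- which crucially depends on compactness of the Neumann resolvent on the bounded interval $\Omega$ --- is what transfers strict positivity of the limit $-\partial_x^2$ back to $\oper{A}_1^\lambda$ on the entire space at once, and hence to every compression simultaneously.
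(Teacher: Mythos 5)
Your argument is correct, but it is organised differently from the paper's, and is heavier than necessary at two points. The paper's own proof exploits the fact that on vectors of the form $(u_1,0)$ or $(0,u_2)$ lying in the truncation subspace the off-diagonal terms contribute nothing to the quadratic form: once one knows $\oper{A}_{2,\varepsilon}^\lambda\ge\oper{A}_{2}^\lambda\geq\gamma>0$ and $\oper{A}_1^\lambda\geq\gamma>0$ for all $\lambda\ge\Lambda$ (this is \autoref{lem:prop A}(d)), the compressed operator is positive definite on an $n$-dimensional subspace and negative definite on another, and since it acts on a $2n$-dimensional space the count $(n,n)$ follows immediately --- no control of the coupling $B^\lambda(P^\lambda)^{-1}(B^\lambda)^*$ is needed, whereas your Schur-complement/Haynsworth route requires that extra estimate and pushes $\lambda^*$ further up. The second real difference is how you obtain uniform positivity of $\oper{A}_1^\lambda$: you go through norm resolvent convergence $\oper{A}_1^\lambda\to-\partial_x^2$ as $\lambda\to\infty$, upgrading $\Q\tos1$ by compactness of the Neumann resolvent on the bounded set $\Omega$; the paper instead uses the quantitative bound $\norm{(\Q-1)h}_{\Ls_\pm}\le\norm{\D h}_{\Ls_\pm}/\lambda$ of \autoref{properties of Q}(c) together with $\norm{\D h}_{\Ls_\pm}\lesssim\norm{\partial_x h}_{L^2_0(\Omega)}$ for $h=h(x)$ and the Neumann spectral gap $K$, which yields $\ip{\oper{A}_1^\lambda h}{h}\ge K\bigl(1-C/(\sqrt{K}\lambda)\bigr)\norm{h}^2$ directly. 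This also shows that the obstacle you flagged is not actually there: the ``naive'' subspace/min-max count does not lose uniformity in $n$ and $\varepsilon$, because uniformity comes from this form estimate on the full space (which every compression inherits), not from any norm convergence of the perturbation.

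Two minor points to fix if you keep your route. First, the paper's truncation basis consists of eigenvectors of the diagonal operator $\+{\oper{A}}_\varepsilon^\lambda$ (i.e.\ of $-\partial_x^2+\varepsilon W$ on $L^2(\mathbb{R})$ and of the mean-zero Neumann Laplacian on $L^2_0(\Omega)$), not of $\oper{A}_{2,\varepsilon}^\lambda$; your argument only uses that the truncation subspace splits as an $n$-dimensional subspace of $L^2(\mathbb{R})$ plus an $n$-dimensional subspace of $L^2_0(\Omega)$, which the paper's truncation does satisfy, so the proof survives, but the basis should be described as in the paper. Second, in the norm-resolvent step you should also invoke the $\lambda$-uniform bound on the perturbation $\int\sum_\pm\mu_e^\pm(\Q-1)\,d\+v$ (from \autoref{properties of J}) to get a $\lambda$-uniform lower bound on $\oper{A}_1^\lambda$; only then does spectral non-expansion under norm resolvent convergence, which controls the spectrum on compact sets, rule out all spectrum below $\mu_1/2$ rather than just spectrum in a bounded window.
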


\begin{proof}
Take $\+u=(u_1,0)\in L^2(\mathbb{R})\times L^2_0(\Omega)$ with $u_1\in\dom{\oper{A}^\lambda_{2,\varepsilon}}$, $\norm{\+u}_{L^2(\mathbb{R})\times L^2(\Omega)}=1$ and $\+u$ in the $2n$ dimensional subspace associated with the truncation. Then,
\begin{equation*}
\begin{aligned}
\ip{\+{\oper{M}}^{\lambda}_{\varepsilon,n}u}{u}_{L^2(\mathbb{R})\times L^2_0(\Omega)}&=\ip{\oper{A}_{1,\varepsilon,n}^{\lambda}u_1}{u_1}_{L^2(\mathbb{R})}=\ip{\oper{A}_{1,\varepsilon}^{\lambda}u_1}{u_1}_{L^2(\mathbb{R})}\\
&=\ip{\oper{A}_{1}^{\lambda}u_1}{u_1}_{L^2(\mathbb{R})}+\varepsilon\norm{\sqrt{W}u_1}^2_{L^2(\mathbb{R})}.
\end{aligned}
\end{equation*}
As the second term is non-negative we may apply \autoref{lem:prop A}(d) to see that, for all large enough $\lambda$ (independently of $n$ and $\varepsilon$), $\+{\oper{M}}^{\lambda}_{\varepsilon,n}$ is positive definite on a subspace of dimension $n$, so has $n$ positive eigenvalues. Performing the same computation on $\+u=(0,u_2)$ in the subspace associated with the truncation and with $u_2\in\dom{\oper{A}_{1,\varepsilon}^\lambda}$, we obtain that for large enough $\lambda$, $\+{\oper{M}}^{\lambda}_{\varepsilon,n}$ is negative definite on a subspace of dimension $n$. As $\+{\oper{M}}^{\lambda}_{\varepsilon,n}$ has exactly $2n$ eigenvalues the proof is complete.
\end{proof}

\subsubsection{\texorpdfstring{The spectrum for small $\lambda$}{The spectrum for small lambda}}
We now consider $\mathrm{sp}(\+{\oper{M}}^{\lambda_*}_{\varepsilon,n})$. We recall the result on spectra of real block matrix operators in \cite{Ben-Artzi2011b}:
\begin{lemma}\label{eigenvalue counts of matrices}
Let $M$ be the real symmetric block matrix
\begin{equation*}
M=\begin{bmatrix}
   A_2&B^T\\
   B&-A_1
  \end{bmatrix}
\end{equation*}
with $A_1$ invertible. Then $M$ has the same number of negative eigenvalues as the matrix
\begin{equation*}
N=\begin{bmatrix}
   A_2+B^TA_1^{-1}B&0\\
   0&-A_1
  \end{bmatrix}.
\end{equation*}
\end{lemma}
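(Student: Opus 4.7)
The plan is to exhibit an explicit congruence $M = Q^{T} N Q$ with $Q$ invertible, and then invoke Sylvester's law of inertia to conclude that $M$ and $N$ have the same number of negative (as well as positive and zero) eigenvalues.

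The natural candidate is the block lower--triangular matrix arising from the Schur complement factorisation,
\[
Q = \begin{bmatrix} I & 0 \\ -A_1^{-1} B & I \end{bmatrix},
\qquad
Q^{T} = \begin{bmatrix} I & -B^{T} A_1^{-1} \\ 0 & I \end{bmatrix},
\]
which is well defined precisely because $A_1$ is invertible, and is invertible with inverse obtained by negating the off-diagonal block. The first step is a direct block-matrix computation verifying
\[
Q^{T}\begin{bmatrix} A_2 + B^{T} A_1^{-1} B & 0 \\ 0 & -A_1 \end{bmatrix} Q
= \begin{bmatrix} A_2 & B^{T} \\ B & -A_1 \end{bmatrix},
\]
i.e.\ $Q^{T} N Q = M$. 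This is a routine calculation: multiplying $N Q$ first produces the block lower--triangular matrix with diagonal $(A_2+B^{T}A_1^{-1}B,-A_1)$ and bottom-left entry $B$, and left-multiplying by $Q^{T}$ cancels the Schur complement term to recover $A_2$ in the top-left slot, while producing $B^{T}$ in the top-right slot.

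With the congruence established, the second step is to apply Sylvester's law of inertia for real symmetric matrices: two real symmetric matrices related by $M = Q^{T} N Q$ with $Q$ invertible have the same signature, and in particular the same number of negative eigenvalues. This finishes the proof.

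The only potential obstacle is purely bookkeeping in the block computation (making sure $A_1$ is invertible so $A_1^{-1}B$ is defined, and that the transpose of $-A_1^{-1}B$ is indeed $-B^{T}A_1^{-1}$, which uses symmetry of $A_1$); there is no substantive difficulty, since the lemma is finite-dimensional linear algebra and the Schur complement construction is completely explicit.
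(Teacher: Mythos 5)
Your proof is correct: the block computation $Q^{T}NQ=M$ checks out (with the symmetry of $A_1$, inherited from the symmetry of $M$, justifying $(-A_1^{-1}B)^{T}=-B^{T}A_1^{-1}$), and Sylvester's law of inertia then gives equality of the negative eigenvalue counts. The paper itself states this lemma without proof, citing \cite{Ben-Artzi2011b}; the Schur-complement congruence argument you give is precisely the standard proof of that result, so there is nothing to add.
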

%

\begin{lemma}\label{really finding lambda lower * 15d}
Assume  that \eqref{eigcountcond1} holds and that zero is in the resolvent set of $\oper{A}_1^0$. Then there exist $\lambda_*,\varepsilon_*>0$ such that for all $\varepsilon\in(0,\varepsilon_*)$ there is $N>0$ such that for all $n>N$ the operator $\+{\oper{M}}^{\lambda_*}_{\varepsilon,n}$ satisfies
\begin{equation*} 
 \negeig(\+{\oper{M}}^{\lambda_*}_{\varepsilon,n})\ge \negeig(\oper{A}^{0}_2+(\oper{B}^0)^*(\oper{A}_1^{0})^{-1}\oper{B}^{0})+n-\negeig(\oper{A}_1^0).
\end{equation*}
\end{lemma}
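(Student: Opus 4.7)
The plan is to apply Lemma \ref{eigenvalue counts of matrices} (the Haynsworth/Schur inertia identity) to the finite-dimensional truncation $\+{\oper{M}}^{\lambda_*}_{\varepsilon,n}$ and then estimate the two resulting inertia contributions separately via spectral approximation. First invoke Lemma \ref{eigcountcond1movedlemma} to pick $\lambda_*>0$ small enough that $0$ is in the resolvent set of $\oper{A}^{\lambda}_1$ for all $\lambda\in[0,\lambda_*]$, that $\negeig(\oper{A}^{\lambda_*}_1)=\negeig(\oper{A}^0_1)$, and that
\[
\negeig\bigl(\oper{A}^{\lambda_*}_2+(\oper{B}^{\lambda_*})^*(\oper{A}^{\lambda_*}_1)^{-1}\oper{B}^{\lambda_*}\bigr)\ \ge\ k_0\ :=\ \negeig\bigl(\oper{A}^0_2+(\oper{B}^0)^*(\oper{A}^0_1)^{-1}\oper{B}^0\bigr),
\]
exactly as in Steps~1--3 of its proof. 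Write
\[
\+{\oper{M}}^{\lambda_*}_{\varepsilon,n}=\begin{bmatrix} [\oper{A}^{\lambda_*}_{2,\varepsilon}]_n & [(\oper{B}^{\lambda_*})^*]_n\\ [\oper{B}^{\lambda_*}]_n & -[\oper{A}^{\lambda_*}_1]_n\end{bmatrix},
\]
where $[\,\cdot\,]_n$ denotes the Galerkin compression to the chosen $2n$-dimensional truncation subspace.

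Since $\oper{A}^{\lambda_*}_1$ acts on $L^2_0(\Omega)$ with $\Omega$ bounded, its spectrum is purely discrete and by Step~1 has a gap at $0$. The compressions $[\oper{A}^{\lambda_*}_1]_n$ converge to $\oper{A}^{\lambda_*}_1$ in the strong resolvent sense; combined with the gap this yields, for all $n\ge N_1$, that $[\oper{A}^{\lambda_*}_1]_n$ is invertible with uniformly bounded inverse and $\negeig([\oper{A}^{\lambda_*}_1]_n)=\negeig(\oper{A}^0_1)$. Lemma \ref{eigenvalue counts of matrices} then applies and gives
\[
\negeig(\+{\oper{M}}^{\lambda_*}_{\varepsilon,n})\ =\ \negeig(S_n)\ +\ \bigl(n-\negeig(\oper{A}^0_1)\bigr),\qquad S_n:=[\oper{A}^{\lambda_*}_{2,\varepsilon}]_n+[(\oper{B}^{\lambda_*})^*]_n[\oper{A}^{\lambda_*}_1]_n^{-1}[\oper{B}^{\lambda_*}]_n,
\]
reducing the proof to $\negeig(S_n)\ge k_0$ for $n$ large and $\varepsilon$ small.

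For this, use a min--max/test-subspace argument. Pick a $k_0$-dimensional subspace $V\subset C^\infty_0(\mathbb R)$ on which the quadratic form of $\oper{A}^{\lambda_*}_2+(\oper{B}^{\lambda_*})^*(\oper{A}^{\lambda_*}_1)^{-1}\oper{B}^{\lambda_*}$ is bounded above by $-\delta\|\cdot\|^2$ for some $\delta>0$; this is possible by approximating the genuine negative spectral subspace by compactly supported smooth functions. Fix $V$ first, and then choose $\varepsilon_*$ so small that $\varepsilon_*\sup\{\|\sqrt{W} v\|^2:v\in V,\ \|v\|=1\}<\delta/4$. For $v\in V$ and $n\to\infty$ one has $P_n v\to v$ in $L^2$, $\oper{B}^{\lambda_*}P_n v\to \oper{B}^{\lambda_*}v$ (boundedness of $\oper{B}^{\lambda_*}$), and $[\oper{A}^{\lambda_*}_1]_n^{-1}\to (\oper{A}^{\lambda_*}_1)^{-1}$ strongly on $L^2_0(\Omega)$; using that $P_n$ commutes with $-\partial_x^2+\varepsilon W+\lambda_*^2$, one also obtains $\oper{A}^{\lambda_*}_{2,\varepsilon} P_n v\to \oper{A}^{\lambda_*}_{2,\varepsilon} v$ in $L^2$. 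Assembling the pieces yields
\[
\langle S_n P_n v, P_n v\rangle\ \longrightarrow\ \bigl\langle\bigl(\oper{A}^{\lambda_*}_2+(\oper{B}^{\lambda_*})^*(\oper{A}^{\lambda_*}_1)^{-1}\oper{B}^{\lambda_*}\bigr)v,\,v\bigr\rangle+\varepsilon\|\sqrt{W} v\|^2\ \le\ -\tfrac{\delta}{2}\|v\|^2
\]
uniformly on the unit sphere of $V$, for all $\varepsilon<\varepsilon_*$ and all $n$ sufficiently large (depending on $\varepsilon$). Since $P_n|_V$ is eventually injective, $P_n V$ is a bona fide $k_0$-dimensional subspace of the truncation on which $S_n$ is strictly negative, hence $\negeig(S_n)\ge k_0$, which combined with the inertia decomposition above gives the stated bound.

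The main obstacle is the simultaneous convergence of $S_n$ to the untruncated Schur complement on the fixed finite-dimensional $V$: one needs the compressed inverse $[\oper{A}^{\lambda_*}_1]_n^{-1}$ to converge strongly to $(\oper{A}^{\lambda_*}_1)^{-1}$, which relies on $\oper{A}^{\lambda_*}_1$ having purely discrete spectrum with a gap at $0$ (both ensured by Step~1 and the boundedness of $\Omega$). The $\varepsilon W$ perturbation is harmless because $V$ is finite-dimensional and fixed before $\varepsilon_*$ is chosen.
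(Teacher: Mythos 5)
Your proposal is correct, and it rests on the same two pillars as the paper's argument: \autoref{eigcountcond1movedlemma} to transport the hypothesis \eqref{eigcountcond1} from $\lambda=0$ to a fixed $\lambda_*>0$, and the inertia identity of \autoref{eigenvalue counts of matrices} applied to the truncated block operator. Where you genuinely diverge is in how the two remaining limits are treated. The paper handles the $\varepsilon W$ regularisation by invoking stability of eigenvalues under strong resolvent perturbation (Kato) for the Schur complement at $\lambda=0$, and is essentially silent about the passage to the finite truncation, compressing everything into the single identity $\negeig(\+{\oper{M}}^{\lambda_*}_{\varepsilon,n})=\negeig(\oper{A}^{\lambda_*}_{2,\varepsilon}+(\oper{B}^{\lambda_*})^*(\oper{A}_1^{\lambda_*})^{-1}\oper{B}^{\lambda_*})+n-\negeig(\oper{A}^{\lambda_*}_1)$, which mixes the finite-dimensional and untruncated operators; the justification of the ``for all $n>N$'' quantifier is left implicit (it is of the same nature as \autoref{thm:approximation}). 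You instead fix a $k_0$-dimensional test space $V\subset C^\infty_0(\mathbb{R})$ on which the untruncated Schur complement at $\lambda_*$ is uniformly negative, absorb the $\varepsilon W$ term by choosing $\varepsilon_*$ after $V$, and verify convergence of the compressed Schur complement on $P_nV$ directly; this is more elementary and actually supplies the missing $n$-dependence, at the modest price of redoing by hand what the abstract approximation machinery provides. One point to tighten: for the compressed block $[\oper{A}^{\lambda_*}_1]_n$, bare strong resolvent convergence plus a gap at $0$ is not by itself enough to exclude spurious eigenvalues near $0$; the clean justification is the min--max inequality for Ritz values of the semibounded operator $\oper{A}^{\lambda_*}_1$ with compact resolvent (each Ritz value lies above the corresponding true eigenvalue and converges to it, since the trial spaces are spectral subspaces of the Neumann Laplacian and exhaust a form core), which gives invertibility with uniformly bounded inverse and $\negeig([\oper{A}^{\lambda_*}_1]_n)=\negeig(\oper{A}^{0}_1)$ for $n$ large, exactly as you use it.
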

\begin{proof}
 The number $\lambda_*$ is the one given in \autoref{eigcountcond1movedlemma}, and satisfies that for all $\lambda\in[0,\lambda_*]$ the kernel of $\oper{A}_1^\lambda$ is trivial. Since eigenvalues (counting multiplicity) are stable under strong resolvent perturbations (see \cite[VIII.3.5.Thm 3.15.]{Kato1995}), there exists $\varepsilon_*>0$ such that $\negeig(\oper{A}^{0}_{2,\varepsilon}+(\oper{B}^0)^*(\oper{A}_1^{0})^{-1}\oper{B}^{0})\geq\negeig(\oper{A}^{0}_2+(\oper{B}^0)^*(\oper{A}_1^{0})^{-1}\oper{B}^{0})$ for all $\varepsilon\in[0,\varepsilon_*]$. The result then follows from \autoref{eigenvalue counts of matrices}, since $\negeig(\+{\oper{M}}^{\lambda_*}_{\varepsilon,n})= \negeig(\oper{A}^{\lambda_*}_{2,\varepsilon}+(\oper{B}^{\lambda_*})^*(\oper{A}_1^{\lambda_*})^{-1}\oper{B}^{\lambda_*})+n-\negeig(\oper{A}^{\lambda_*}_1)$.
\end{proof}

\subsection{The cylindrically symmetric case}\label{sec:finding kernel 3d}
For brevity we write
	\begin{equation*}
	\Mt
	=
	\begin{bmatrix}
	\widetilde{\oper{A}}_2^\lambda & (\widetilde{\+{\oper{B}}}_4^\lambda)^*\\
	\widetilde{\+{\oper{B}}}_4^\lambda & -\widetilde{\+{\oper{A}}}_4^\lambda
	\end{bmatrix}
	\end{equation*}
where
	\begin{equation*}
	\widetilde{\+{\oper{A}}}_4^\lambda
	=
	\begin{bmatrix}
	\widetilde{\oper{A}}_1^\lambda&(\widetilde{\oper{B}}_3^\lambda)^*\\
	\widetilde{\oper{B}}_3^\lambda&\widetilde{\oper{A}}_3^\lambda	
	\end{bmatrix}\qquad\text{and}\qquad\widetilde{\+{\oper{B}}}_4^\lambda
	=
	\begin{bmatrix}
	\widetilde{\oper{B}}_1^\lambda \\
	\widetilde{\oper{B}}_2^\lambda
	\end{bmatrix}.
	\end{equation*}
\subsubsection{Continuity of the spectrum at \texorpdfstring{$\lambda=0$}{lambda=0}}\label{subsec:Moving instability criterion 3d}
\begin{lemma}
Assume that \eqref{eq:thm2-condition} holds, that $\widetilde{\oper{A}}_3^0$ does not have $0$ as an $L^6$-eigenvalue (see \autoref{def:L6kernel}) and that $\widetilde{\oper{A}}_1^0$ does not have $0$ as an eigenvalue.  Then there exists $\lambda_*>0$ such that for $\lambda\in[0,\lambda_*]$,
\begin{equation*}
\negeig(\widetilde{\oper{A}}_2^\lambda+(\widetilde{\+{\oper{B}}}_4^\lambda)^*(\widetilde{\+{\oper{A}}}_4^\lambda)^{-1}\widetilde{\+{\oper{B}}}_4^\lambda)> \negeig(\widetilde{\+{\oper{A}}}_4^\lambda).
\end{equation*}
\end{lemma}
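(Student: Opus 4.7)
The proof will parallel that of \autoref{eigcountcond1movedlemma}, with the crucial additional observation that at $\lambda=0$ the $2\times2$ block operator $\widetilde{\+{\oper{A}}}_4^\lambda$ becomes block-diagonal. Indeed, since $\Qtskew=-\lambda\widetilde{\oper{D}}_\pm(\lambda^2-\widetilde{\oper{D}}_\pm^2)^{-1}$ converges strongly to $0$ as $\lambda\to 0^+$, both $\widetilde{\oper{B}}_2^0=0$ and $\widetilde{\oper{B}}_3^0=0$, so
\[
\widetilde{\+{\oper{A}}}_4^0=\begin{bmatrix}\widetilde{\oper{A}}_1^0 & 0\\ 0 & \widetilde{\oper{A}}_3^0\end{bmatrix},\qquad \widetilde{\+{\oper{B}}}_4^0=\begin{bmatrix}\widetilde{\oper{B}}_1^0\\ 0\end{bmatrix}.
\]
Consequently $\negeig(\widetilde{\+{\oper{A}}}_4^0)=\negeig(\widetilde{\oper{A}}_1^0)+\negeig(\widetilde{\oper{A}}_3^0)$ and $(\widetilde{\+{\oper{B}}}_4^0)^*(\widetilde{\+{\oper{A}}}_4^0)^{-1}\widetilde{\+{\oper{B}}}_4^0=(\widetilde{\oper{B}}_1^0)^*(\widetilde{\oper{A}}_1^0)^{-1}\widetilde{\oper{B}}_1^0$, so that the hypothesis \eqref{eq:thm2-condition} is exactly the desired inequality at $\lambda=0$. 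The goal is therefore to transport this inequality to a right neighbourhood of $0$.

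The first step is to show $\widetilde{\+{\oper{A}}}_4^\lambda$ is invertible for all sufficiently small $\lambda\geq 0$, with $\negeig(\widetilde{\+{\oper{A}}}_4^\lambda)=\negeig(\widetilde{\oper{A}}_1^0)+\negeig(\widetilde{\oper{A}}_3^0)$. For $\lambda>0$ the mass term $\lambda^2$ pushes the essential spectra of $\widetilde{\oper{A}}_1^\lambda$ and $\widetilde{\oper{A}}_3^\lambda$ into $[\lambda^2,\infty)$, so $0$ sits in a gap and it suffices to exclude discrete eigenvalues near $0$. The hypotheses that $0$ is not an eigenvalue of $\widetilde{\oper{A}}_1^0$ and not an $L^6$-eigenvalue of $\widetilde{\oper{A}}_3^0$ are precisely the obstruction to a threshold resonance producing such an eigenvalue as $\lambda$ is switched on (see \autoref{rek:L6}). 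Once this is established, norm-resolvent continuity of $\lambda\mapsto\widetilde{\+{\oper{A}}}_4^\lambda$ (the cylindrical analogue of \autoref{lem:prop A}) both preserves invertibility and, since the spectrum is purely discrete in a neighbourhood of $0$, freezes the negative eigenvalue count on $[0,\lambda_*]$ for some $\lambda_*>0$.

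The second step is to study the self-adjoint Schur-complement operator
\[
\widetilde{\oper{S}}^\lambda:=\widetilde{\oper{A}}_2^\lambda+(\widetilde{\+{\oper{B}}}_4^\lambda)^*(\widetilde{\+{\oper{A}}}_4^\lambda)^{-1}\widetilde{\+{\oper{B}}}_4^\lambda,
\]
which is well defined and norm-resolvent continuous on $[0,\lambda_*]$ by the first step together with the uniform boundedness of $\widetilde{\+{\oper{B}}}_4^\lambda$. Its essential spectrum lies in $[\lambda^2,\infty)$ and, by the cylindrical analogue of \autoref{lemma:weyl-finite}, it has only finitely many negative eigenvalues. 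Choosing $\sigma<0$ strictly above all negative eigenvalues of $\widetilde{\oper{S}}^0$, norm-resolvent continuity prevents any eigenvalue from crossing $\sigma$ from below (and permits the count to grow only if new eigenvalues emerge from the essential spectrum), yielding $\negeig(\widetilde{\oper{S}}^\lambda)\geq\negeig(\widetilde{\oper{S}}^0)$ on $[0,\lambda_*]$, possibly after shrinking $\lambda_*$. Combining with the first step,
\[
\negeig(\widetilde{\oper{S}}^\lambda)\geq\negeig(\widetilde{\oper{S}}^0)>\negeig(\widetilde{\oper{A}}_1^0)+\negeig(\widetilde{\oper{A}}_3^0)=\negeig(\widetilde{\+{\oper{A}}}_4^\lambda),
\]
which is the required inequality.

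The main obstacle is the first step. In contrast to the $1.5d$ setting where $\oper{A}_1^0$ acted on a bounded domain and had purely discrete spectrum, here $\widetilde{\oper{A}}_1^0$ and $\widetilde{\oper{A}}_3^0$ both have essential spectrum reaching down to $0$ (being Schr\"odinger operators on all of $\mathbb{R}^3$ with a perturbation supported on $\Omega$). The ordinary ``no eigenvalue at $0$'' hypothesis is insufficient for $\widetilde{\oper{A}}_3^0$ because a non-$L^2$ but $L^6$ distributional solution could still destabilise the spectrum as $\lambda^2$ is added; ruling out such $L^6$-eigenfunctions is exactly the content of \autoref{def:L6kernel} and is what the potential-formulated Maxwell structure requires. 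Everything else is a faithful transcription of the scheme used in the $1.5d$ case.
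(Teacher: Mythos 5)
Your overall skeleton matches the paper's: you correctly observe that $\widetilde{\oper{B}}_2^0=\widetilde{\oper{B}}_3^0=0$ so that $\widetilde{\+{\oper{A}}}_4^0$ is block-diagonal and \eqref{eq:thm2-condition} is exactly the desired inequality at $\lambda=0$, and you correctly identify that the work consists of transporting the counts $\negeig(\widetilde{\+{\oper{A}}}_4^\lambda)$ and $\negeig(\widetilde{\oper{S}}^\lambda)$ from $\lambda=0$ to a right neighbourhood. But the step you yourself call ``the main obstacle'' is precisely the step you never carry out, and it cannot be dispatched by the phrases you use. You assert that the hypotheses ``are precisely the obstruction to a threshold resonance'' and that, once invertibility is known, norm-resolvent continuity ``freezes the negative eigenvalue count'' because ``the spectrum is purely discrete in a neighbourhood of $0$.'' At $\lambda=0$ the essential spectrum of $\widetilde{\+{\oper{A}}}_4^0$ reaches down to $0$, so there is no spectral gap there, and norm-resolvent continuity alone does not prevent negative eigenvalues from being absorbed into (or emerging from) the essential spectrum at $0$ as $\lambda\to0$. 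Ruling this out is the actual content of the paper's argument: one assumes a sequence $\lambda_n\to0$ with eigenvalues $\sigma_n\to 0^-$ (or a failure of the uniform lower bound $\norm{\mathbbm{1}_\Omega\widetilde{\+{\oper{A}}}_4^\lambda\+u}\ge C\norm{\mathbbm{1}_\Omega\+u}$ for solutions vanishing outside $\Omega$), derives a uniform bound on $\norm{\nabla\+u_n}_{L^2}$ from the energy identity, uses the Sobolev inequality to get weak $L^6$ limits, Rellich to get $L^2_{loc}$ convergence (so the limit is nonzero because $\norm{\mathbbm{1}_\Omega\+u_n}=1$), passes to the limit in the equation and applies elliptic regularity to produce a nontrivial $L^6$-eigenfunction of $0$, contradicting the hypotheses. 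Nothing in your proposal substitutes for this compactness construction; without it neither Step 1 (invertibility, and the meaning of $(\widetilde{\+{\oper{A}}}_4^0)^{-1}$) nor Step 2 (constancy of $\negeig(\widetilde{\+{\oper{A}}}_4^\lambda)$) is established.

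Two further points are missing. First, your hypotheses give only that $0$ is not an ($L^2$-) eigenvalue of $\widetilde{\oper{A}}_1^0$, whereas the compactness argument above produces an $L^6$-eigenfunction of the block operator; one must therefore upgrade the hypothesis on $\widetilde{\oper{A}}_1^0$ by showing that any $L^6$-eigenfunction of $\widetilde{\oper{A}}_1^0$ at $0$ is automatically square integrable (this uses that the mean perturbed charge vanishes, via the Green's function of the Laplacian, as in \cite[Lemma 3.2]{Lin2007}); your proposal silently assumes the two notions coincide for $\widetilde{\oper{A}}_1^0$. Second, since $\widetilde{\+{\oper{A}}}_4^0$ is \emph{not} invertible on the whole space ($0$ lies in its essential spectrum), the Schur complement at $\lambda=0$ only makes sense after localisation: because $\widetilde{\+{\oper{B}}}_4^\lambda=\widetilde{\+{\oper{B}}}_4^\lambda\+{\oper{P}}$ with $\+{\oper{P}}$ the multiplication by $\mathbbm{1}_\Omega$, what one actually needs is that $\+{\oper{P}}(\widetilde{\+{\oper{A}}}_4^\lambda)^{-1}\+{\oper{P}}$ is uniformly bounded for small $\lambda>0$ and has a strong limit as $\lambda\to0$ (again proved by the same compactness argument); your claim that $\widetilde{\oper{S}}^\lambda$ is norm-resolvent continuous down to $\lambda=0$ ``by the first step together with the uniform boundedness of $\widetilde{\+{\oper{B}}}_4^\lambda$'' skips exactly this construction.
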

\begin{proof}
We first note that as the mean perturbed charge is zero, it follows from direct computation on the Green's function of the Laplacian (see \cite[Lemma 3.2]{Lin2007}) that any $L^6$-eigenfunction of $\widetilde{\oper{A}}_1^0$ will also be square integrable and so be a proper eigenfunction.
Note also that $\widetilde{\oper{B}}_3^0=0$. Thus our assumptions imply that $\widetilde{\+{\oper{A}}}_4^0$ has no $L^6$-eigenfunction of $0$.
\newline

We model the proof on that of \autoref{eigcountcond1movedlemma}, splitting it into 4 steps.

\paragraph{Step 1. $\widetilde{\+{\oper{A}}}_4^\lambda$ is invertible for small $\lambda\ge0$ when restricted to functions supported in $\Omega$. }
Let $\+{\oper{P}}\in\bounded{L^2_{cyl}(\mathbb{R}^3)\times L^2_{rz}(\mathbb{R}^3;\mathbb{R}^3)}$ be multiplication by the indicator function of $\Omega$. We claim that for all small enough $\lambda>0$, $\+{\oper{P}}(\widetilde{\+{\oper{A}}}_4^\lambda)^{-1}\+{\oper{P}}$ is a well defined bounded operator that is strongly continuous in $\lambda>0$ and has a strong limit as $\lambda\to0$. To prove this, we argue that if this were not the case, then zero would be an $L^6$-eigenvalue, a contradiction.

As $L^2_{cyl}(\mathbb{R}^3)\times L^2_{rz}(\mathbb{R}^3;\mathbb{R}^3)$ is a closed subspace of $L^2(\mathbb{R}^3;\mathbb{R}^4)$ we may work in the larger space to ease notation. To this end, let $\norm{\cdot}$ and $\ip{\cdot}{\cdot}$ denote the $L^2(\mathbb{R}^3;\mathbb{R}^4)$ norm and inner product. We can express $\widetilde{\+{\oper{A}}}_4^\lambda$ in the form
\begin{equation*}
\widetilde{\+{\oper{A}}}_4^\lambda\+u=-\+\Delta \+u+\lambda^2\+u+\+{\oper{K}}^\lambda\+u
\end{equation*}
where $\+{\oper{K}}^\lambda$ is uniformly bounded, strongly continuous in $\lambda\ge0$ and $\+{\oper{K}}^\lambda=\+{\oper{P}}\+{\oper{K}}^\lambda\+{\oper{P}}$.

\subparagraph{Step 1.1. $\widetilde{\+{\oper{A}}}_4^\lambda$ is bounded below when restricted to functions supported in $\Omega$.}
First we claim that there exist constants $\lambda'>0$ and $C>0$ such that we have the uniform lower bound
\begin{equation}\label{operator lower bound}
 \norm{\mathbbm{1}_\Omega\widetilde{\+{\oper{A}}}_4^\lambda\+u^\lambda}\ge C\norm{\mathbbm{1}_\Omega \+u^\lambda},\qquad \forall\lambda\in(0,\lambda']
\end{equation}
where the constant $C$ does not depend on $\lambda$ or on $\+u^\lambda$ and where $\+u^\lambda$ satisfies $\widetilde{\+{\oper{A}}}_4^\lambda\+ u^\lambda=0$ outside $\Omega$.
Indeed, if not there would be sequences $\lambda_n\to0$, $\+u_n$ with $\norm{\mathbbm{1}_\Omega\+u_n}_{L^2}=1$ that satisfy
\begin{equation}\label{equation for operator lower bound1}
\widetilde{\+{\oper{A}}}_4^{\lambda_n}\+u_n=-\+\Delta \+u+\lambda^2_n\+u_n+\+{\oper{K}}^{\lambda_n}\+u_n=\+f_n\to0
\end{equation}
as $n\to\infty$, with $\+f_n$ supported in $\Omega$. Hence,
\begin{equation*}\label{equation for operator lower bound2}
\lambda_n^2\norm{\+u_n}^2+\norm{\nabla \+u_n}^2+\ip{\+{\oper{K}}^{\lambda_n}\+u_n}{\+u_n}=\ip{\+f_n}{\+u_n}\to0
\end{equation*}
so that $\norm{\nabla \+u_n}^2$ is uniformly bounded for large enough $n$. Hence there exists a subsequence (we abuse notation and keep the same sequence) such that $ \nabla \+u_n\tow \+v\text{ weakly in }L^2(\mathbb{R}^3;\mathbb{R}^4)$ for some $\+v\in L^2(\mathbb{R}^3;\mathbb{R}^4)$. By the standard Sobolev inequality $\norm{\varphi}_{L^6(\mathbb{R}^3)}\le C\norm{\nabla\varphi}_{L^2(\mathbb{R}^3)}$ we have $ \+u_n\tow \+u\text{ weakly in }L^6(\mathbb{R}^3;\mathbb{R}^4)$ for some $ \+u\in L^6(\mathbb{R}^3;\mathbb{R}^4)$. Furthermore, by Rellich's theorem we have $ \+u_n\to \+u\text{ in }L^2_{loc}(\mathbb{R}^3;\mathbb{R}^4)$. This implies that necessarily $\+v=\nabla\+u$. In particular we deduce that the limit $\+u$ is in $L^6(\mathbb{R}^3;\mathbb{R}^4)$ and $\norm{\mathbbm{1}_\Omega\+u}=1$ so $\+u\ne0$. Passing to the limit in \eqref{equation for operator lower bound1}, $\+u$ satisfies
\begin{equation*}
-\+\Delta \+u+\+{\oper{K}}^0\+u=0
\end{equation*}
in the sense of distributions and by elliptic regularity $\+u\in H^2_{loc}(\mathbb{R}^3;\mathbb{R}^4)$. In fact $u$ is an $L^6$-eigenfunction of $0$, which we assumed impossible. This proves the claim.

\subparagraph{Step 1.2. $\widetilde{\+{\oper{A}}}_4^\lambda$ is invertible for all small enough $\lambda>0$.}

For any $\lambda>0$, $0$ does not lie in the essential spectrum of $\widetilde{\+{\oper{A}}}_4^\lambda$ so is either an eigenvalue or in the resolvent set. Let $\lambda>0$ be small enough that \eqref{operator lower bound} holds, then any eigenfunction $\+u$ of $0$ satisfies all the assumptions of the claim above, and hence $\norm{\mathbbm{1}_\Omega\+u}\le C \norm{\mathbbm{1}_\Omega\widetilde{\+{\oper{A}}}_4^\lambda\+u}=0$ so that $\+u=0$ inside $\Omega$. Clearly this implies that $\+u=0$ in $\mathbb{R}^3$ which is a contradiction. In the same way we deduce a uniform bound $1/C$ for the operator $\+{\oper{P}}(\widetilde{\+{\oper{A}}}_4^\lambda)^{-1}\+{\oper{P}}$ for such small $\lambda>0$.

\subparagraph{Step 1.3. $\+{\oper{P}}(\widetilde{\+{\oper{A}}}_4^0)^{-1}\+{\oper{P}}$ is well defined and bounded.}
Finally we give a meaning to $\+{\oper{P}}(\widetilde{\+{\oper{A}}}_4^0)^{-1}\+{\oper{P}}$, (which is required as $\widetilde{\+{\oper{A}}}_4^0$ is not invertible on the whole space). We define it to be the strong operator limit of $\+{\oper{P}}(\widetilde{\+{\oper{A}}}_4^\lambda)^{-1}\+{\oper{P}}$ as $\lambda\to0$. Indeed, suppose that $\+f$ is fixed with support in $\Omega$ and $\lambda_n\to0$. Then we wish to compute the limit of $\+{\oper{P}}\+u_n$ for $\+u_n=(\widetilde{\+{\oper{A}}}_4^{\lambda_n})^{-1}\+{\oper{P}}\+f$ as $n\to\infty$ and show that it is independent of the sequence $\lambda_n\to0$. Indeed $\+u_n$ will satisfy
\begin{equation*}
\widetilde{\+{\oper{A}}}_4^{\lambda_n}\+u_n=\lambda^2_n\+u_n-\+\Delta \+u_n+\+{\oper{K}}^{\lambda_n}\+u_n=\+f.
\end{equation*}
By the same argument as before we can extract a subsequence and limit $\+u\in L^6(\mathbb{R}^3;\mathbb{R}^4)$ with convergences as in Step 1.1. In particular $\+{\oper{P}}\+u_n\to \+{\oper{P}}\+ u$. Furthermore, any convergent subsequence $\+u_{n_k}$ for any sequence $\lambda_n\to0$ has the same limit of $\+{\oper{P}}\+u_{n_k}$ as otherwise their difference would be an $L^6$-eigenfunction in the kernel, which we assumed to not exist. We have shown that for any sequences $\lambda_n\to0$ and $\+u_n$ we can extract a subsequence converging to a limit that does not depend on the subsequence or the original sequence, therefore the whole sequence converges to this limit, which completes the proof of the existence of the strong operator limit. The bound in operator norm is carried over by strong continuity.

\paragraph{Step 2. $\negeig(\widetilde{\+{\oper{A}}}_4^\lambda)=\negeig(\widetilde{\+{\oper{A}}}_4^0)$ for all $\lambda\in[0,\lambda_*]$.} $\widetilde{\+{\oper{A}}}_4^\lambda$ is norm resolvent continuous in $\lambda\ge0$, so the only way the number of negative eigenvalues could change is for an eigenvalue to be absorbed into the essential spectrum at $0$ as $\lambda\to0$. Assume this happens, then we have a sequence $\lambda_n\to0$, a sequence of negative eigenvalues $\sigma_n\to0$ and eigenfunctions $\+u_n$ which satisfy
\begin{equation*}
-\+\Delta\+u_n+\lambda_n^2\+u_n+\+{\oper{K}}^{\lambda_n}\+u_n=\sigma_n\+u_n.
\end{equation*}
By the same argument as in the previous steps, we may take subsequences and obtain a contradiction.

\paragraph{Step 3. $\negeig(\widetilde{\oper{A}}_2^\lambda+(\widetilde{\+{\oper{B}}}_4^\lambda)^*(\widetilde{\+{\oper{A}}}_4^\lambda)^{-1}\widetilde{\+{\oper{B}}}_4^\lambda)\ge \negeig(\widetilde{\oper{A}}_2^0+(\widetilde{\+{\oper{B}}}_4^0)^*(\widetilde{\+{\oper{A}}}_4^0)^{-1}\widetilde{\+{\oper{B}}}_4^0)$ for all $\lambda\in[0,\lambda_*]$.}
This may be proved in the same way as Step 3 of \autoref{eigcountcond1movedlemma}.
\paragraph{Step 4. $\negeig(\widetilde{\oper{A}}_2^0+(\widetilde{\+{\oper{B}}}_4^0)^*(\widetilde{\+{\oper{A}}}_4^0)^{-1}\widetilde{\+{\oper{B}}}_4^0)>\negeig(\widetilde{\+{\oper{A}}}_4^0)$.}
As $\widetilde{\oper{B}}_2^0=0$ and $\widetilde{\oper{B}}_3^0=0$ we have,
\begin{equation*}
\begin{aligned}
\negeig&(\widetilde{\oper{A}}_2^0+(\widetilde{\+{\oper{B}}}_4^0)^*(\widetilde{\+{\oper{A}}}_4^0)^{-1}\widetilde{\+{\oper{B}}}_4^0)\\
&=\negeig(\widetilde{\oper{A}}_2^0+(\widetilde{\oper{B}}_1^0)^*(\widetilde{\oper{A}}_1^0)^{-1}\widetilde{\oper{B}}^0_1)>\negeig(\widetilde{\oper{A}}_1^0)+\negeig(\widetilde{\oper{A}}_3^0)=\negeig(\widetilde{\+{\oper{A}}}_4^0)
\end{aligned}
\end{equation*}
where the inequality is obtained from the assumption of the lemma.
\end{proof}

\subsubsection{Finding a non-trivial kernel}
The next few steps of the proof follow those of the $1.5d$ case, hence we only provide a short overview.

\emph{Truncation.}
As the domain is unbounded, each Laplacian appearing in the problem contributes an essential spectrum on $[0,\infty)$. We therefore introduce a smooth positive potential function $W:\mathbb{R}^3\to\mathbb{R}$ satisfying $W(x)\to\infty$ as $|x|\to\infty$ and denote by $W^{\otimes n}$ the $n$-dimensional vector-valued function with $n$ copies of $W$. Then we define
\begin{equation*}
\widetilde{\+{\oper{M}}}^\lambda_\varepsilon
=
	\begin{bmatrix}
	\widetilde{\oper{A}}_{2,\varepsilon}^\lambda & (\widetilde{\+{\oper{B}}}_4^\lambda)^*\\
	\widetilde{\+{\oper{B}}}_4^\lambda & -\widetilde{\+{\oper{A}}}_{4,\varepsilon}^\lambda
	\end{bmatrix}
=
\underbrace{\begin{bmatrix}
-\Delta+\varepsilon W^{\otimes 3}&0\\
0&\Delta-\varepsilon W^{\otimes 4}
\end{bmatrix}
+
\begin{bmatrix}
\lambda^2&0\\
0&-\lambda^2
\end{bmatrix}}_{\widetilde{\+{\oper{A}}}_\varepsilon^\lambda}
-\widetilde{\+{\oper{J}}}^\lambda.
\end{equation*}
As above we can naturally define finite-dimensional operators $\widetilde{\+{\oper{M}}}_{\varepsilon,n}^{\lambda}$, for which we can easily prove:
\begin{lemma}\label{spectral continuity argument 3d}
Fix $\varepsilon>0,n\in\mathbb{N}$. Suppose that there exist $0<\lambda_*<\lambda^*<\infty$ such that $\negeig({\widetilde{\+{\oper{M}}}}_{\varepsilon,n}^{\lambda_*})\ne\negeig(\widetilde{\+{\oper{M}}}_{\varepsilon,n}^{\lambda^*})$. Then there is a $\lambda_{\varepsilon,n}\in(\lambda_*,\lambda^*)$ for which $\ker({\widetilde{\+{\oper{M}}}}^\lambda_{\varepsilon,n})$ is non-trivial.
\end{lemma}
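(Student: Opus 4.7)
The plan is to mimic the proof of \autoref{spectral continuity argument} essentially verbatim, since the structural ingredients are unchanged in the cylindrically symmetric setting. First I would verify that $\widetilde{\+{\oper{M}}}_{\varepsilon,n}^\lambda$ is a self-adjoint matrix on a finite-dimensional real inner product space for each $\lambda>0$. This is immediate from the construction: the ``diagonal'' part $\widetilde{\+{\oper{A}}}_\varepsilon^\lambda$ is self-adjoint with compact resolvent once the confining potentials $\varepsilon W^{\otimes k}$ are added (since $W(x)\to\infty$), so its eigenvectors form an orthonormal basis of the ambient space $L^2_\theta(\mathbb{R}^3;\mathbb{R}^3)\oplus L^2_{cyl}(\mathbb{R}^3)\oplus L^2_{rz}(\mathbb{R}^3;\mathbb{R}^3)$. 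Restricting the symmetric operator $\widetilde{\+{\oper{M}}}_\varepsilon^\lambda=\widetilde{\+{\oper{A}}}_\varepsilon^\lambda-\widetilde{\+{\oper{J}}}^\lambda$ to the span of the first $n$ eigenvectors in each block yields a symmetric matrix, whose spectrum is real and finite.

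Next I would verify that the map $\lambda\mapsto\mathrm{sp}(\widetilde{\+{\oper{M}}}_{\varepsilon,n}^\lambda)$ is continuous on $[\lambda_*,\lambda^*]$ with respect to the Hausdorff distance. This is precisely the content of \autoref{thm:approximation} (applied with the parameter interval rescaled from $[0,1]$ to $[\lambda_*,\lambda^*]$, which is irrelevant to the argument): the required hypotheses, namely sectoriality and holomorphy of $\widetilde{\+{\oper{A}}}_\varepsilon^\lambda$, the spectral gap above $\lambda_*^2>0$, the uniform boundedness and strong continuity of $\widetilde{\+{\oper{J}}}^\lambda$, and the compactness structure of the perturbation transmitted through the equilibrium support $\Omega$, are exactly the properties summarised at the end of \autoref{Sec:equiv cyl} and are the same properties that underlie the $1.5d$ proof. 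The compactification axiom (v) is supplied by the added potentials $\varepsilon W^{\otimes k}$.

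With continuity of the finite spectrum in hand, the result follows from an intermediate-value argument. Enumerate the eigenvalues of $\widetilde{\+{\oper{M}}}_{\varepsilon,n}^\lambda$ in increasing order as continuous functions $\mu_1(\lambda)\le\dots\le\mu_N(\lambda)$ on $[\lambda_*,\lambda^*]$, where $N$ is the total dimension of the truncation subspace. If $\negeig(\widetilde{\+{\oper{M}}}_{\varepsilon,n}^{\lambda_*})\ne\negeig(\widetilde{\+{\oper{M}}}_{\varepsilon,n}^{\lambda^*})$, then some $\mu_k$ has opposite signs at the two endpoints, so by the intermediate value theorem $\mu_k(\lambda_{\varepsilon,n})=0$ for some $\lambda_{\varepsilon,n}\in(\lambda_*,\lambda^*)$, which is exactly the statement that $\ker(\widetilde{\+{\oper{M}}}_{\varepsilon,n}^{\lambda_{\varepsilon,n}})$ is non-trivial.

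I do not expect any genuine obstacle here: the only mild subtlety is that the truncation subspace itself varies with $\lambda$, but this is absorbed into \autoref{thm:approximation}, whose whole purpose is to deliver Hausdorff continuity of the finite-dimensional spectra despite the $\lambda$-dependence of the approximating projectors. Given that piece, the proof is essentially a one-line application of the intermediate value theorem, exactly parallel to the $1.5d$ case.
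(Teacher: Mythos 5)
Your argument is correct and is essentially the paper's own: the paper proves the $1.5d$ analogue (\autoref{spectral continuity argument}) by exactly this reasoning — the truncated operator is a finite-dimensional symmetric operator whose eigenvalues vary continuously in $\lambda$ (via the framework of \autoref{thm:approximation}), so a change in $\negeig$ forces an eigenvalue to cross zero — and for the cylindrical case it simply states that the same proof applies, which is what you have written out. The only cosmetic difference is that you spell out the intermediate-value step with ordered eigenvalue branches, at the same level of rigour as the paper's sketch.
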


\emph{The spectrum for large $\lambda$.}
This is again similar to the $1.5d$ case, in particular due to the appearance of the $\lambda^2$ terms. We have:
\begin{lemma}\label{finding lambda upper * 3d}
There is $\lambda^*>0$ such that for all $\lambda\ge\lambda^*$, $\varepsilon>0$, and $n\in\mathbb{N}$, the truncated operator $\Mt_{\varepsilon,n}$ has spectrum composed of exactly $n$ positive and $n$ negative eigenvalues. In particular $\negeig({\widetilde{\+{\oper{M}}}}^\lambda_{\varepsilon,n})=n$.
\end{lemma}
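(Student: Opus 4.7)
The plan is to mirror the proof of \autoref{finding lambda upper *} from the $1.5d$ case verbatim, exploiting the fact that the $\pm\lambda^2$ diagonal terms dominate every bounded perturbation once $\lambda$ is sufficiently large. Since the truncation is carried out with respect to eigenfunctions of the block-diagonal operator $\widetilde{\+{\oper{A}}}_\varepsilon^\lambda$, the $2n$-dimensional truncated subspace decomposes as an orthogonal sum $V_1\oplus V_2$, where $V_1$ is an $n$-dimensional subspace of $L^2_\theta(\mathbb{R}^3;\mathbb{R}^3)$ (the first diagonal block) and $V_2$ is an $n$-dimensional subspace of $L^2_{cyl}(\mathbb{R}^3)\times L^2_{rz}(\mathbb{R}^3;\mathbb{R}^3)$ (the second diagonal block), in direct analogy with the $1.5d$ case.

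Next I would observe that both $\widetilde{\oper{A}}_2^\lambda$ and $\widetilde{\+{\oper{A}}}_4^\lambda$ can be written in the form $-\+\Delta+\lambda^2+\+{\oper{K}}^\lambda$ with $\+{\oper{K}}^\lambda$ uniformly bounded in $\lambda\ge 0$. For $\widetilde{\+{\oper{A}}}_4^\lambda$ this requires absorbing the off-diagonal couplings $\widetilde{\oper{B}}_3^\lambda$ into $\+{\oper{K}}^\lambda$, which is harmless because $\widetilde{\oper{B}}_3^\lambda$ was shown at the end of \autoref{Sec:equiv cyl} to be bounded uniformly in $\lambda$. Hence one can pick $\lambda^*>0$ (independent of $\varepsilon$ and $n$) such that $\widetilde{\oper{A}}_2^\lambda\ge 1$ and $\widetilde{\+{\oper{A}}}_4^\lambda\ge 1$ as quadratic forms for all $\lambda\ge\lambda^*$.

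Then, for any nonzero $\+u=(u_1,0)\in V_1\oplus V_2$ I would compute
\[
\ip{\widetilde{\+{\oper{M}}}^\lambda_{\varepsilon,n}\+u}{\+u}=\ip{\widetilde{\oper{A}}_2^\lambda u_1}{u_1}+\varepsilon\norm{(W^{\otimes 3})^{1/2}u_1}^2\ge\norm{u_1}^2>0,
\]
so $\widetilde{\+{\oper{M}}}^\lambda_{\varepsilon,n}$ is positive definite on $V_1$ and has at least $n$ positive eigenvalues. The symmetric computation on $\+u=(0,u_2)\in V_1\oplus V_2$ yields
\[
\ip{\widetilde{\+{\oper{M}}}^\lambda_{\varepsilon,n}\+u}{\+u}=-\ip{\widetilde{\+{\oper{A}}}_4^\lambda u_2}{u_2}-\varepsilon\norm{(W^{\otimes 4})^{1/2}u_2}^2\le-\norm{u_2}^2<0,
\]
so one also has at least $n$ negative eigenvalues. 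Since $\dim(V_1\oplus V_2)=2n$, the two counts must in fact be exactly $n$ each, which gives the claim.

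The only subtlety, compared to the $1.5d$ case, is the internal $2\times 2$ structure of the second diagonal block $-\widetilde{\+{\oper{A}}}_{4,\varepsilon}^\lambda$ arising from the coupling via $\widetilde{\oper{B}}_3^\lambda$. This is the step I would expect to require most care, but it reduces to the uniform-in-$\lambda$ boundedness of $\widetilde{\oper{B}}_3^\lambda$ already established in \autoref{Sec:equiv cyl}; no new ideas are needed.
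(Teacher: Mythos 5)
Your proof is correct and is essentially the same argument the paper uses, which simply defers to the $1.5d$ case (\autoref{finding lambda upper *}): test on $(u_1,0)$ and $(0,u_2)$ in the truncated subspace, use the lower bound $\widetilde{\oper{A}}_i^\lambda>\gamma$ for $\lambda\geq\Lambda$ from \autoref{lem:prop A 3d}(d), and conclude by counting dimensions. The point you flag about absorbing $\widetilde{\oper{B}}_3^\lambda$ into the bounded perturbation of $\widetilde{\+{\oper{A}}}_4^\lambda$ is exactly what \autoref{lem:prop A 3d}(d) already handles, so no extra work is needed.
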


\emph{The spectrum for small $\lambda$.}
Again this is similar to the $1.5d$ case.
\begin{lemma}\label{really finding lambda lower *}
Assume  that \eqref{eq:thm2-condition} holds and that zero is neither an eigenvalue of $\widetilde{\oper{A}}_1^0$ nor is it an $L^6$-eigenvalue of $\widetilde{\oper{A}}_3^0$. Then there exist $\lambda_*,\varepsilon_*>0$ such that for all $\varepsilon\in(0,\varepsilon_*)$ there is $N>0$ such that for all $n>N$ the operator $\+{\widetilde{\oper{M}}}^{\lambda_*}_{\varepsilon,n}$ satisfies
\begin{equation*}
 \negeig({\widetilde{\+{\oper{M}}}}^{\lambda_*}_{\varepsilon,n})\ge \operatorname{neg}\left(\widetilde{\oper{A}}_2^0+\left(\widetilde{\oper{B}}_1^0\right)^*\left(\widetilde{\oper{A}}_1^0\right)^{-1}\widetilde{\oper{B}}_1^0\right)+n-\negeig\left({\widetilde{\oper{A}}}_1^0\right)-\negeig\left({\widetilde{\oper{A}}}_3^0\right).
\end{equation*}
\end{lemma}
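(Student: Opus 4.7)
The strategy is a direct translation of the three-step argument used in \autoref{really finding lambda lower * 15d}, with the single scalar operator $\oper{A}_1^\lambda$ replaced by the $2\times 2$ block operator $\widetilde{\+{\oper{A}}}_4^\lambda$. I will take $\lambda_*>0$ to be the value produced by the preceding lemma, so that at $\lambda=\lambda_*$ both the instability condition on $\widetilde{\oper{A}}_2^\lambda+(\widetilde{\+{\oper{B}}}_4^\lambda)^*(\widetilde{\+{\oper{A}}}_4^\lambda)^{-1}\widetilde{\+{\oper{B}}}_4^\lambda$ holds and the Schur complement is well defined (recall that $\widetilde{\+{\oper{B}}}_4^\lambda$ has range in functions supported in $\Omega$, which is the subspace on which $\widetilde{\+{\oper{A}}}_4^\lambda$ was shown to be invertible). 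Since the regularized operators $\widetilde{\oper{A}}_{2,\varepsilon}^{\lambda_*}+(\widetilde{\+{\oper{B}}}_4^{\lambda_*})^*(\widetilde{\+{\oper{A}}}_{4,\varepsilon}^{\lambda_*})^{-1}\widetilde{\+{\oper{B}}}_4^{\lambda_*}$ converge in the strong resolvent sense to their $\varepsilon=0$ counterparts as $\varepsilon\to0$, the stability of eigenvalues counted with multiplicity under such perturbations (\cite[VIII.3.5.Thm 3.15.]{Kato1995}) gives $\varepsilon_*>0$ with
\begin{equation*}
\negeig\!\left(\widetilde{\oper{A}}_{2,\varepsilon}^{\lambda_*}+(\widetilde{\+{\oper{B}}}_4^{\lambda_*})^*(\widetilde{\+{\oper{A}}}_{4,\varepsilon}^{\lambda_*})^{-1}\widetilde{\+{\oper{B}}}_4^{\lambda_*}\right)\geq \negeig\!\left(\widetilde{\oper{A}}_2^{0}+(\widetilde{\oper{B}}_1^0)^*(\widetilde{\oper{A}}_1^{0})^{-1}\widetilde{\oper{B}}_1^{0}\right)
\end{equation*}
for $\varepsilon\in(0,\varepsilon_*)$, where we have used the block-diagonal identity $\widetilde{\+{\oper{A}}}_4^0=\mathrm{diag}(\widetilde{\oper{A}}_1^0,\widetilde{\oper{A}}_3^0)$ (since $\widetilde{\oper{B}}_3^0=0$) and that $\widetilde{\oper{B}}_2^0=0$, so that the off-diagonal contribution of $\widetilde{\+{\oper{B}}}_4^0$ reduces to $\widetilde{\oper{B}}_1^0$ acting against $(\widetilde{\oper{A}}_1^0)^{-1}$ alone.

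Next I apply \autoref{eigenvalue counts of matrices} to the truncated operator $\widetilde{\+{\oper{M}}}^{\lambda_*}_{\varepsilon,n}$, regarded as a real symmetric block matrix with respect to the splitting of the truncation subspace into $n$ eigenfunctions of the $\widetilde{\oper{A}}_{2,\varepsilon}^{\lambda_*}$-block and $n$ eigenfunctions of the $\widetilde{\+{\oper{A}}}_{4,\varepsilon}^{\lambda_*}$-block. Provided the finite-dimensional compression $\widetilde{\+{\oper{A}}}_{4,\varepsilon,n}^{\lambda_*}$ is invertible (which holds for large enough $n$ since $\widetilde{\+{\oper{A}}}_{4,\varepsilon}^{\lambda_*}$ has trivial kernel on the relevant subspace, and norm resolvent continuity in $n$ preserves the gap around $0$), the lemma yields
\begin{equation*}
\negeig(\widetilde{\+{\oper{M}}}^{\lambda_*}_{\varepsilon,n})=\negeig\!\left(\widetilde{\oper{A}}_{2,\varepsilon,n}^{\lambda_*}+(\widetilde{\+{\oper{B}}}_{4,n}^{\lambda_*})^*(\widetilde{\+{\oper{A}}}_{4,\varepsilon,n}^{\lambda_*})^{-1}\widetilde{\+{\oper{B}}}_{4,n}^{\lambda_*}\right)+n-\negeig(\widetilde{\+{\oper{A}}}_{4,\varepsilon,n}^{\lambda_*}).
\end{equation*}
Here the term $n-\negeig(\widetilde{\+{\oper{A}}}_{4,\varepsilon,n}^{\lambda_*})$ counts the negative eigenvalues of $-\widetilde{\+{\oper{A}}}_{4,\varepsilon,n}^{\lambda_*}$ on the $n$-dimensional compression.

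Finally, for $n$ large the negative spectra of $\widetilde{\+{\oper{A}}}_{4,\varepsilon,n}^{\lambda_*}$ and of the Schur complement stabilize to those of $\widetilde{\+{\oper{A}}}_{4,\varepsilon}^{\lambda_*}$ and its full Schur complement (by the convergence theorem from \autoref{Abstract Section} applied to the part of the spectrum in a neighbourhood of $(-\infty,0]$, together with \autoref{lemma:weyl-finite} which bounds the count uniformly). Combining the steps, I obtain
\begin{equation*}
\negeig(\widetilde{\+{\oper{M}}}^{\lambda_*}_{\varepsilon,n})\geq \negeig\!\left(\widetilde{\oper{A}}_2^0+(\widetilde{\oper{B}}_1^0)^*(\widetilde{\oper{A}}_1^0)^{-1}\widetilde{\oper{B}}_1^0\right)+n-\negeig(\widetilde{\oper{A}}_1^0)-\negeig(\widetilde{\oper{A}}_3^0),
\end{equation*}
which is the desired bound. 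The main obstacle is the justification that eigenvalue counts for the Schur complement pass correctly to the truncation: since the essential spectrum $[\lambda_*^2,\infty)$ sits strictly above zero and the Schur complement is a finite-rank perturbation of a coercive Schr\"odinger operator (plus $\varepsilon W$), the finite number of negative eigenvalues is insensitive to truncation once $n$ exceeds some threshold; this is where invertibility of $\widetilde{\+{\oper{A}}}_4^{\lambda_*}$ on the range of $\widetilde{\+{\oper{B}}}_4^{\lambda_*}$ (supported in $\Omega$) and the compactness property (iv) of \autoref{Abstract Section} are genuinely used.
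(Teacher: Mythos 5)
Your proposal is correct and follows essentially the same route as the paper, which omits the details of this proof and simply refers to the $1.5d$ argument (\autoref{really finding lambda lower * 15d}): $\lambda_*$ from the preceding continuity lemma, $\varepsilon_*$ from Kato's stability of eigenvalues under the $\varepsilon W$ regularisation, and \autoref{eigenvalue counts of matrices} applied to the truncation, using $\widetilde{\oper{B}}_2^0=\widetilde{\oper{B}}_3^0=0$ so that the Schur complement at $\lambda=0$ reduces to $\widetilde{\oper{A}}_2^0+(\widetilde{\oper{B}}_1^0)^*(\widetilde{\oper{A}}_1^0)^{-1}\widetilde{\oper{B}}_1^0$ and $\negeig(\widetilde{\+{\oper{A}}}_4^0)=\negeig(\widetilde{\oper{A}}_1^0)+\negeig(\widetilde{\oper{A}}_3^0)$. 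Your additional care about how the negative-eigenvalue counts pass to the finite-dimensional compressions for large $n$ is consistent with (and somewhat more explicit than) the paper's treatment.
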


\section{Proofs of the main theorems}\label{sec:existence}
In this section we complete the proofs of \autoref{theorem1} and \autoref{theorem2}. In both settings -- the $1.5d$ and the cylindrically symmetric -- we first show that the results of \autoref{sec:solving} imply that there exists some $\lambda>0$ such that the equivalent problems \eqref{Formally definition of M} and \eqref{Formally definition of M 2} have a non-trivial solution (the $\lambda$ need not be the same in both cases, of course). Then we show that these non-trivial solutions lead to genuine non-trivial solutions of the linearised RVM in either case.

\subsection{The \texorpdfstring{$1.5d$}{1.5d} case}\label{sec:existence 15d}
\subsubsection{Existence of a non-trivial kernel of the equivalent problem}\label{sec:exist non triv ker 1.5d}
By \autoref{finding lambda upper *} and \autoref{really finding lambda lower * 15d} we have $0<\lambda_*<\lambda^*<\infty$ and $\varepsilon_*>0$ such that for any $\varepsilon<\varepsilon_*$ there is an $N_{\varepsilon}$ such that for $n>N_{\varepsilon}$ we have,
\begin{equation*}
\begin{aligned}
\negeig(\oper{M}^{\lambda_*}_{\varepsilon,n})&\ge \negeig(\oper{A}^{0}_2+(\oper{B}^0)^*(\oper{A}_1^{0})^{-1}\oper{B}^{0})+n-\negeig(\oper{A}_1^0)\\
&>n=\negeig(\+{\oper{M}}^{\lambda^*}_{\varepsilon,n}),
\end{aligned}
\end{equation*}
where the strict inequality is due to the assumption \eqref{eq:thm1-condition}.  Fix $\varepsilon\in(0,\varepsilon_*)$. By \autoref{spectral continuity argument} for each $n>N_{\varepsilon}$ there exists $\lambda_{\varepsilon,n}\in(\lambda_*,\lambda^*)$ such that $0\in\operatorname{sp}(\+{\oper{M}}^{\lambda_{\varepsilon,n}}_{\varepsilon,n})$. By compactness of the interval $[\lambda_*,\lambda^*]$ we may pass to a subsequence where $\lambda_{\varepsilon,n_k}\to\lambda_{\varepsilon}$ as $k\to\infty$, for some $\lambda_{\varepsilon}\in[\lambda_*,\lambda^*]$. By \autoref{thm:approximation} the spectra of the approximations $\+{\oper{M}}^{\lambda_{\varepsilon,n_k}}_{\varepsilon,n_k}$ converge in Hausdorff distance in $(-K,\lambda_*^2)$ to the spectrum of $\+{\oper{M}}^{\lambda_\varepsilon}_{\varepsilon}$, where $K>0$ is the spectral gap of the Neumann Laplacian $\partial_x^2$ on $L^2_0(\Omega)$. This implies that $0\in \operatorname{sp}(\+{\oper{M}}^{\lambda_{\varepsilon}}_{\varepsilon})$. We now repeat this argument to send $\varepsilon\downarrow0$, obtaining $\lambda\in[\lambda_*,\lambda^*]$ with $0\in\operatorname{sp}(\+{\oper{M}}^{\lambda})$. Finally, the discreteness of the spectrum of $\+{\oper{M}}^{\lambda}$ in $(-\infty,\lambda^2)$, (\autoref{lem:prop M}), ensures that $0$ is an eigenvalue of $\+{\oper{M}}^{\lambda}$, i.e. $\+{\oper{M}}^{\lambda}$ has a non-trivial kernel.

\subsubsection{Existence of a growing mode}
Now that we know that there exist some $\lambda\in(0,\infty)$ and some $u=\begin{bmatrix}\psi&\phi\end{bmatrix}^T\in H^2(\mathbb{R})\times H^2_{0,n}(\Omega)$ that solve \eqref{Formally definition of M} we show that a genuine growing mode as defined in \eqref{eq:purely-growing} really exists. To this end, we use $\phi,\psi$ and $\lambda$ to define
	\[
	E_1=-\partial_x\phi \qquad E_2=-\lambda\psi \qquad B=\partial_x\psi
	\]
(which lie in $H^1(\Omega)$, $H^2(\mathbb{R})$ and $H^1(\mathbb{R})$, respectively) and to define $f^\pm(x,v)$ as in \eqref{f in 1.5d}:
	\[f^\pm=\pm\mu^\pm_e\phi\pm{\mu_p^\pm}\psi\pm\lambda(\lambda+\D)^{-1}[\mu_e^\pm(-\phi+\hat{v}_2\psi)].\]
Observe that $f^\pm$ are both in $L^2(\mathbb{R}\times\mathbb{R}^2)$ since $\mu_e^\pm$ and $\mu_p^\pm$ are continuous functions that are compactly supported in the spatial variable which satisfy the integrability condition \eqref{eq:int-condition}. In fact, $f^\pm$ are in the domains of $\D$, respectively, since $e^\pm$ and $p^\pm$ are constant along trajectories and $\phi$ and $\psi$ are twice differentiable.

\begin{lemma}\label{cont eq 1.5d}
The functions $f^\pm$ solve the linearised Vlasov equations \eqref{final linearised RVM}.
\end{lemma}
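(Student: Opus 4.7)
The plan is essentially to apply the operator $(\lambda + \oper{D}_\pm)$ to the definition of $f^\pm$ and check that the output coincides with the right-hand side of \eqref{final linearised RVM}. Before doing so, the only real content is to verify that each ingredient lies in the correct domain so that the algebra is justified.

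First I would check the regularity/domain conditions. Since $\phi \in H^2_{0,n}(\Omega)$ and $\psi \in H^2(\mathbb{R})$ (from the existence of a nontrivial kernel in \autoref{sec:exist non triv ker 1.5d}), and $\mu_e^\pm, \mu_p^\pm$ are $C^1$ functions of the conserved quantities $e^\pm, p^\pm$ with compact $x$-support $\Omega$, the products $\mu_e^\pm\phi$ and $\mu_p^\pm\psi$ lie in $\dom(\oper{D}_\pm)$: indeed, $\oper{D}_\pm(\mu_e^\pm\phi)=\mu_e^\pm\,\oper{D}_\pm\phi=\mu_e^\pm\hat v_1\partial_x\phi$ since $\oper{D}_\pm\mu_e^\pm=0$ (and similarly for $\mu_p^\pm\psi$), and the integrability weight \eqref{eq:int-condition} makes the result lie in $\Ls_\pm$. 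The same calculation shows that $\mu_e^\pm(-\phi+\hat v_2\psi)\in\Ls_\pm$, so by \autoref{properties of D}(a) the resolvent $(\lambda+\oper{D}_\pm)^{-1}$ applied to it is a well-defined element of $\dom(\oper{D}_\pm)$. Hence $f^\pm\in\dom(\oper{D}_\pm)$, and we may legitimately apply $(\lambda+\oper{D}_\pm)$ to each summand in the defining formula \eqref{f in 1.5d}.

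Next I would carry out the one-line computation. Applying $(\lambda+\oper{D}_\pm)$ termwise,
\begin{equation*}
(\lambda+\oper{D}_\pm)f^\pm = \pm(\lambda+\oper{D}_\pm)\bigl(\mu_e^\pm\phi+\mu_p^\pm\psi\bigr) \pm \lambda\,(\lambda+\oper{D}_\pm)(\lambda+\oper{D}_\pm)^{-1}\bigl[\mu_e^\pm(-\phi+\hat v_2\psi)\bigr],
\end{equation*}
and the second factor on the right collapses to $\pm\lambda\mu_e^\pm(-\phi+\hat v_2\psi)$ by the identity $(\lambda+\oper{D}_\pm)(\lambda+\oper{D}_\pm)^{-1}=\mathrm{Id}$, which is the right-hand side of \eqref{final linearised RVM}.

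There is no real obstacle here; the lemma is essentially built in to the construction of $f^\pm$ via inversion of $(\lambda+\oper{D}_\pm)$. The only thing to be slightly careful about is that the resolvent identity can be applied: this is where the previous verification that $\mu_e^\pm(-\phi+\hat v_2\psi)\in\Ls_\pm$ and $(\lambda+\oper{D}_\pm)^{-1}\Ls_\pm\subset\dom(\oper{D}_\pm)$ matters. If desired, one can further unpack $\oper{D}_\pm(\mu_e^\pm\phi+\mu_p^\pm\psi)=\hat v_1(\mu_e^\pm\partial_x\phi+\mu_p^\pm\partial_x\psi)$ and substitute $E_1=-\partial_x\phi$, $E_2=-\lambda\psi$, $B=\partial_x\psi$ to recover the original form \eqref{RVM 1.5D linearised1} of the linearised Vlasov equation, confirming that $f^\pm$ together with the $E_1, E_2, B$ constructed from $\phi,\psi,\lambda$ solve the full linearised Vlasov equation.
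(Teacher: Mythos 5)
Your proposal is correct and follows the same route as the paper: the paper's proof simply applies $(\lambda+\D)$ to the defining expression \eqref{f in 1.5d} and observes it collapses back to \eqref{final linearised RVM}, with the domain considerations you spell out already noted just before the lemma. Your extra verification of the domain and resolvent-identity details is a harmless elaboration of the same argument.
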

\begin{proof}
This is almost a tautology: applying the operators $\lambda+\D$ to the expressions for $f^\pm$, respectively, one is left precisely with the expressions \eqref{final linearised RVM}.
\end{proof}


\begin{lemma}
The functions $f^\pm$ belong to $L^1(\mathbb{R}\times\mathbb{R}^2)$.
\end{lemma}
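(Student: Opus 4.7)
The plan is to decompose $f^\pm$ into its three natural constituents coming from the formula above,
\begin{equation*}
f^\pm = \pm\mu_e^\pm\phi \pm \mu_p^\pm\psi \pm \lambda(\lambda+\D)^{-1}\bigl[\mu_e^\pm(-\phi+\hat{v}_2\psi)\bigr],
\end{equation*}
and show that each term is integrable on $\mathbb{R}\times\mathbb{R}^2$.

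For the first two terms, the integrability condition \eqref{eq:int-condition} (together with \eqref{eq:weight}) yields $\int(|\mu_e^\pm|+|\mu_p^\pm|)\,d\+vdx<\infty$. Since $\Omega\subset\mathbb{R}$ is bounded, the one-dimensional Sobolev embedding gives $\phi\in L^\infty(\Omega)$ from $\phi\in H^2_{0,n}(\Omega)$, and $\psi\in L^\infty(\mathbb{R})$ from $\psi\in H^2(\mathbb{R})$. As $\mu_e^\pm$ and $\mu_p^\pm$ are compactly supported in $x$ inside $\Omega$, the pointwise bounds $|\mu_e^\pm\phi|\leq\|\phi\|_{L^\infty}|\mu_e^\pm|$ and $|\mu_p^\pm\psi|\leq\|\psi\|_{L^\infty}|\mu_p^\pm|$ give immediately that both products lie in $L^1(\mathbb{R}\times\mathbb{R}^2)$.

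For the third term, I would first observe that its argument $h:=\mu_e^\pm(-\phi+\hat{v}_2\psi)$ is itself in $L^1(\mathbb{R}\times\mathbb{R}^2)$, since $|\hat{v}_2|\leq 1$ and $\phi,\psi$ are bounded, so that $|h|\leq(\|\phi\|_{L^\infty}+\|\psi\|_{L^\infty})|\mu_e^\pm|$. Then the key claim is that $\lambda(\lambda+\D)^{-1}$ is a contraction on $L^1$. This follows from the semigroup representation recorded in \autoref{rem:previous-definitions-of-Q},
\begin{equation*}
\lambda(\lambda+\D)^{-1}h=\int_0^\infty\lambda e^{-\lambda s}\,e^{-s\D}h\,ds,
\end{equation*}
combined with the fact that the characteristic vector field of $\D$ is divergence-free in $(x,\+v)$ (a direct computation using that $E_i^0$, $B^0$ depend only on $x$ while $\hat{v}_i$ depends only on $\+v$). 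Hence the Vlasov flow is measure-preserving, so $e^{-s\D}$ is an $L^1$-isometry, and Minkowski's integral inequality gives
\begin{equation*}
\bigl\|\lambda(\lambda+\D)^{-1}h\bigr\|_{L^1}\leq\int_0^\infty\lambda e^{-\lambda s}\|h\|_{L^1}\,ds=\|h\|_{L^1}<\infty.
\end{equation*}

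The only subtle point will be reconciling two a priori different definitions of the resolvent: on the one hand the functional-analytic one in the weighted space $\Ls_\pm$, on the other the one coming from the $L^1$ semigroup above. Since $h$ lies in both $\Ls_\pm$ and $L^1$, the two definitions must produce the same measurable function: approximate $h$ by smooth compactly supported $h_n$, on which both constructions agree pointwise, then pass to the limit using the semigroup contractivity in each space together with a dominated-convergence argument. This is the main (and only) technical obstacle; once it is settled, summing the $L^1$-bounds on the three pieces completes the proof.
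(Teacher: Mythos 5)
Your proof is correct and follows essentially the same route as the paper: for the non-averaged terms the paper uses Cauchy--Schwarz together with the boundedness/integrability of $\mu_e^\pm,\mu_p^\pm$ while you use the equivalent $L^\infty$ Sobolev bound on $\phi,\psi$, and for the averaged term the paper explicitly offers precisely the ergodic-average representation and $L^1$-contractivity of the Vlasov flow that you carry out. The reconciliation of the $\Ls_\pm$-resolvent with the $L^1$-semigroup formula that you flag is the same implicit point the paper addresses by invoking \autoref{rem:previous-definitions-of-Q}.
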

\begin{proof}
Dropping the $\pm$ for brevity, the first term making up $f$ is estimated as follows	
	\begin{equation*}
	\|\mu_e\phi\|_{L^1(\mathbb{R}^3)}\lesssim\|\mu_e\|_{L^2(\mathbb{R}^3)}\|\phi\|_{L^2(\mathbb{R})}\lesssim\|\mu_e\|^{1/2}_{L^\infty(\mathbb{R}^3)}\|\mu_e\|^{1/2}_{L^1(\mathbb{R}^3)}\|\phi\|_{L^2(\mathbb{R})}<\infty.
	\end{equation*}
The other terms are estimated similarly (for the terms involving the averaging operator this may be seen by writing the ergodic average explicitly (see \autoref{rem:previous-definitions-of-Q}) or by using boundedness of the averaging operator on $\Ls_\pm$). This implies that $f^\pm\in L^1(\mathbb{R}\times\mathbb{R}^2)$.
\end{proof}

We now define the charge and current densities $\rho$ and $j_i$ by
 \begin{equation*}
	\rho=\int (f^+-f^-)\;dv \qquad j_i=\int \hat{v}_i (f^+-f^-)\;dv,\quad i=1,2.
 \end{equation*}
 Integrating $f^\pm$ in the momentum variable $v$ alone, we obtain that $\rho\in L^1(\mathbb{R})$ as well as $j_i\in L^1(\mathbb{R})$ since $|\hat{v}_i|\leq1$. In particular $\rho,j_i$ are distributions on $\mathbb{R}$.

\begin{lemma}
The continuity equation $\lambda\rho+\partial_xj_1=0$ holds in the sense of distributions.
\end{lemma}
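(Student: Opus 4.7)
The plan is to derive the continuity equation directly from the linearised Vlasov equations \eqref{final linearised RVM} (equivalently, their original form \eqref{RVM 1.5D linearised1}), which we already know from \autoref{cont eq 1.5d} that $f^\pm$ satisfy, by testing with a scalar test function $\zeta\in C_c^\infty(\mathbb{R})$ and integrating over phase space. Concretely, I would view $\zeta(x)$ as a function of $(x,\+v)$ independent of $\+v$ and pair it with the identity $\lambda f^\pm+\D f^\pm=G^\pm$, where $G^\pm=\mp\mu_e^\pm\hat{v}_1 E_1\pm\mu_p^\pm\hat{v}_1 B\mp(\mu_e^\pm\hat{v}_2+\mu_p^\pm)E_2$. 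Summing the resulting identities with the appropriate $\pm$ signs should give exactly $\langle\lambda\rho+\partial_x j_1,\zeta\rangle=0$.

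For the left-hand side, the term $\lambda\int\!\!\int\zeta f^\pm\,d\+vdx$ is immediately $\lambda\int\rho^\pm\zeta\,dx$, and for $\int\!\!\int\zeta\D f^\pm\,d\+vdx$ I would split $\D$ into its three summands. The transport piece $\hat{v}_1\partial_x$ yields $-\int\!\!\int\zeta'\hat{v}_1 f^\pm\,d\+vdx=-\int j_1^\pm\zeta'\,dx$ after integrating by parts in $x$ (using that $\zeta$ and the $x$-support of $f^\pm$ are both compact). The two momentum-transport pieces $\pm(E_1^0+E_1^{ext}+\hat{v}_2(B^0+B^{ext}))\partial_{v_1}$ and $\pm(E_2^0+E_2^{ext}-\hat{v}_1(B^0+B^{ext}))\partial_{v_2}$ vanish after integration by parts in $v_1$ and $v_2$ respectively, because the coefficient of $\partial_{v_i}$ depends only on $x$ and the other momentum variable. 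Combining, $\int\!\!\int\zeta(\lambda f^\pm+\D f^\pm)\,d\+vdx=\lambda\int\rho^\pm\zeta\,dx-\int j_1^\pm\zeta'\,dx$.

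For the right-hand side, $\int\!\!\int\zeta G^\pm\,d\+vdx$ should be seen to vanish term by term. From \eqref{eq:constants-1.5} the conserved quantities $e^\pm$ and $p^\pm$ are even, respectively independent, in $v_1$, so $\mu^\pm$ and hence $\mu_e^\pm,\mu_p^\pm$ are even in $v_1$; therefore $\int\mu_e^\pm\hat{v}_1\,d\+v=\int\mu_p^\pm\hat{v}_1\,d\+v=0$ by odd parity in $v_1$, killing the $E_1$ and $B$ contributions. The remaining $E_2$ contribution vanishes after the $v_2$-integration because of the perfect-derivative identity \eqref{eq:perfect deriv}, namely $\int(\mu_e^\pm\hat{v}_2+\mu_p^\pm)\,dv_2=0$. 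Multiplying by $(\pm)$ and summing over $\pm$ then gives $\lambda\int\rho\zeta\,dx-\int j_1\zeta'\,dx=0$, which is the continuity equation in $\mathcal{D}'(\mathbb{R})$.

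The main technical obstacle is justifying the integrations by parts and the application of Fubini's theorem, since $f^\pm$ is only a priori known to lie in $L^1\cap\Ls_\pm$ rather than being smooth. This is handled by the explicit construction \eqref{f in 1.5d}: the terms $\mu_e^\pm\phi,\mu_p^\pm\psi$ inherit compact $x$-support from $\mu^\pm$ and polynomial-type decay in $\+v$ from \eqref{eq:weight}--\eqref{eq:int-condition} with $\alpha>2$, and the averaging-operator term is controlled using the boundedness of $\Q$ on $\Ls_\pm$ (or by approximation with a $v$-cutoff $\chi_R(\+v)\to1$ and passing to the limit via dominated convergence). These give enough decay in $\+v$ at infinity for the $v$-boundary terms to vanish and enough regularity, via $f^\pm\in\dom{\D}$, for the $x$-integration by parts to be legitimate.
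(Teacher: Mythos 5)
Your overall strategy is the same as the paper's: integrate the linearised Vlasov equation over momentum (you do it in weak form against $\zeta\in C_c^\infty(\mathbb{R})$), kill the right-hand side using evenness of $\mu^\pm$ in $v_1$ together with the perfect-derivative identity \eqref{eq:perfect deriv}, and then subtract the two species. The paper works with the potential form \eqref{final linearised RVM} and uses that $\D$ acting on functions of $x$ alone equals $\hat v_1\partial_x$, hence produces odd functions of $v_1$; your field-form computation with $G^\pm$ is equivalent, and your handling of the technical issues (momentum cutoffs, dominated convergence, Fubini) is at least as explicit as the paper's admittedly informal argument.

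One justification, however, is wrong as stated. You claim the two momentum-transport pieces of $\D$ integrate to zero separately ``because the coefficient of $\partial_{v_i}$ depends only on $x$ and the other momentum variable.'' In the relativistic setting this is false: $\hat v_2=v_2/\left<\+v\right>$ depends on $v_1$, and $\hat v_1$ depends on $v_2$. Integrating by parts in $v_1$ leaves the remainder $\mp\int (B^0+B^{ext})\,(\partial_{v_1}\hat v_2)\,f^\pm\,d\+v$, and the $v_2$-piece leaves an analogous remainder; neither vanishes on its own. What saves the step is that $\partial_{v_1}\hat v_2=\partial_{v_2}\hat v_1=-v_1v_2\left<\+v\right>^{-3}$, so the two remainders cancel against each other: the Lorentz force is divergence-free in $\+v$, and only the \emph{sum} of the two force terms integrates to zero. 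This is precisely the fact the paper uses implicitly when it identifies $\int\D f^\pm\,d\+v$ with $\partial_x j_1^\pm$. With that correction your proof goes through; note also that your coefficient of $\partial_{v_2}$ matches \eqref{eq:vlasov-1.5} (the correct force; the printed \eqref{eq:D} appears to contain a typo with $\hat v_2$ in place of $\hat v_1$), so the expression you used is the right one.
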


\begin{proof}
This follows from integrating the linearised Vlasov equations in the momentum variable. Indeed, we informally have
	\begin{align*}
	 \int(\lambda+\D)f^\pm\,dv&=\pm\int\left[(\lambda+\D)(\mu^\pm_e\phi+{\mu_p^\pm}\psi)+\lambda\mu^\pm_e(-\phi+\hat{v}_2\psi)\right]\,dv\\
	 &=\pm\int\lambda\psi\left(\mu_p^\pm+\mu_e^\pm\hat{v}_2\right)\,dv\pm\int\D\left(\mu^\pm_e\phi+{\mu_p^\pm}\psi\right)\,dv=0,
	\end{align*}
where the first term on the right hand side vanishes due to the identity \eqref{eq:perfect deriv} and the second term vanishes since $\mu^\pm$ are even in $\hat{v}_1$, whereas $\D=\hat{v}_1\partial_x$ when applied to functions of $x$ alone (recall that $\mu^\pm$ are constant along trajectories of $\D$, as are $\mu^\pm_e$ and $\mu^\pm_p$). We obtain the continuity equation by subtracting the ``$-$'' expression above from the ``$+$'' expression. Owing to the low regularity of $f^\pm$, $\rho$ and $j_1$ this is true in a weak sense.
\end{proof}

\begin{lemma}\label{Recovery of Maxwell's equations 1.5d}
Maxwell's equations  \eqref{Maxwell's equations 1.5d} hold.
\end{lemma}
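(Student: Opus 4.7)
The plan is to handle the three components of \eqref{Maxwell's equations 1.5d} in two stages. The Gauss equation \eqref{Gauss's equation 1.5d} and the $\psi$-Ampère equation \eqref{Ampere's equation 1 1.5d} are built into the construction of $\M$. Indeed, by the very derivation leading from \eqref{final linearised RVM} through \eqref{eq:Gauss reformulated}--\eqref{eq:Ampere reformulated} to \eqref{Formally definition of M}, substituting the expression \eqref{f in 1.5d} for $f^\pm$ into $\rho$ and $j_2$ gives precisely the two scalar entries of $\M[\psi,\phi]^T$. Since $[\psi,\phi]$ was chosen in the kernel of $\M$, the two equations $(-\partial_x^2+\lambda^2)\psi=j_2$ on $\mathbb{R}$ and $-\partial_x^2\phi=\rho$ on $\Omega$ follow by unwinding the definitions \eqref{eq:operators}, noting along the way that the odd-in-$v_1$ part $f^\pm_{od}$ drops out after integration in $\+v$.

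The remaining equation \eqref{Ampere's equation 2 1.5d}, namely $\lambda\partial_x\phi=j_1$ in $\Omega$, is not encoded in $\M$; indeed the remark following \eqref{Gauss's equation 1.5d} indicated that this redundancy was deliberate, since continuity allows one of \eqref{Ampere's equation 2 1.5d}--\eqref{Ampere's equation 1 1.5d} to be recovered from the other Maxwell equations. I therefore plan to derive \eqref{Ampere's equation 2 1.5d} from the previous lemma (the continuity equation $\lambda\rho+\partial_xj_1=0$) together with the Gauss equation just proved. The first preliminary step is to check that $f^\pm$, and hence $j_1$, has $x$-support contained in $\overline{\Omega}$: for $x\notin\Omega$ the first two terms in \eqref{f in 1.5d} vanish because $\mu_e^\pm$ and $\mu_p^\pm$ do, while in the third term $\mu_e^\pm$ is a function of $e^\pm,p^\pm$ and therefore lies in $\ker(\D)$, so it may be pulled out of the averaging operator $\lambda(\lambda+\D)^{-1}$ (for instance via the representation in \autoref{rem:previous-definitions-of-Q}), again producing zero for $x\notin\Omega$. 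The second preliminary observation is that, thanks to the Neumann condition on $\phi$, the function $E_1:=-\partial_x\phi\cdot\mathbbm{1}_\Omega$ extended by zero belongs to $H^1(\mathbb{R})$, and the pointwise identity $\partial_xE_1=\rho$ holds on the whole real line in the distributional sense.

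Combining these, I differentiate the (extended) Gauss identity and substitute into continuity to obtain
\[
\partial_x\bigl(\lambda E_1+j_1\bigr)=\lambda\partial_xE_1+\partial_xj_1=\lambda\rho+\partial_xj_1=0\qquad\text{in }\mathcal{D}'(\mathbb{R}).
\]
Hence $\lambda E_1+j_1$ is a constant on $\mathbb{R}$; since both $E_1$ and $j_1$ have support in $\overline{\Omega}$ and thus vanish at infinity, this constant must be zero. Therefore $\lambda E_1=-j_1$, which reads $\lambda\partial_x\phi=j_1$ on $\Omega$, as required.

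The only genuine care needed is in the bookkeeping: verifying that $j_1$ is supported in $\overline{\Omega}$ (using the $\D$-invariance of $\mu_e^\pm$), and that the extension of $-\partial_x\phi$ by zero is distributionally differentiated without producing boundary masses (which is ensured by the Neumann condition $\partial_x\phi|_{\partial\Omega}=0$). Once these two points are in place, the derivation of \eqref{Ampere's equation 2 1.5d} is a one-line consequence of continuity and Gauss, and \eqref{Maxwell's equations 1.5d} is complete.
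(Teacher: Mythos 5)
Your proposal is correct and follows essentially the same route as the paper: you read \eqref{Gauss's equation 1.5d} and \eqref{Ampere's equation 1 1.5d} directly off the rows of $\M\,[\psi,\phi]^T=0$, then recover \eqref{Ampere's equation 2 1.5d} by combining Gauss with the continuity equation to get $\partial_x(\lambda E_1+j_1)=0$ and fixing the integration constant by support/boundary considerations (you do this by extending $E_1$ by zero to $\mathbb{R}$, the paper by matching values on $\partial\Omega$, a cosmetic difference). Your extra check that $j_1$ is supported in $\overline\Omega$, using that $\mu_e^\pm\in\ker(\D)$ can be pulled through the averaging operator, is a nice unpacking of a fact the paper states without detail.
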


\begin{proof}
Equations \eqref{Ampere's equation 1 1.5d} and \eqref{Gauss's equation 1.5d} hold due to \eqref{Formally definition of M} and the definitions of the operators \eqref{eq:operators}. Indeed, from the second line of \eqref{Formally definition of M}, we have
\begin{align*}
	0
	&=
	\left(\int\sum_\pm\mu_p^\pm\,d\+v\right) \psi+\int\sum_\pm\mu_e^\pm\Q~[\hat{v}_2\psi]\,d\+v+\partial_x^2 \phi-\int\sum_{\pm}\mu_e^\pm(\Q-1) \phi\,d\+v\\
	&=\partial_x^2 \phi+\int\sum_\pm\left(\mu_p^\pm\psi+\mu_e^\pm\Q~[\hat{v}_2 \psi]-\mu_e^\pm(\Q-1) \phi\right)\, d\+v\\
	&\stackrel{\eqref{f in 1.5d}}{=}
	\partial_x^2\phi+\int (f^+-f^-)\,d\+v.
	\end{align*}
	which is  \eqref{Gauss's equation 1.5d}. Similarly, \eqref{Ampere's equation 1 1.5d} is obtained from the first line of \eqref{Formally definition of M}.
	
 We therefore just need to show that \eqref{Ampere's equation 2 1.5d} holds. However this is a simple consequence of \eqref{Gauss's equation 1.5d} and the continuity equation. Indeed, we may first write
	\begin{equation*}
	-\lambda \partial_xE_1
	=
	\lambda \partial^2_x\phi
	\stackrel{\eqref{Gauss's equation 1.5d}}{=}
	-\lambda\rho
	\stackrel{\text{cont. eq.}}{=}
	\partial_xj_1
	\end{equation*}
which is the derivative of \eqref{Ampere's equation 2 1.5d}. Next, as $\phi\in H^2_{n,0}(\Omega)$, its derivative $E_1$ vanishes on $\partial\Omega$, and $j_1$ also vanishes there due to the compact support of the equilibrium in $\Omega$. Thus, $-\lambda E_1$ and $j_1$ have the same derivative inside $\Omega$ and the same values on $\partial\Omega$, which means they must be equal.
\end{proof}

\begin{lemma}
 The charge and current densities $\rho$, $j_1$ and $j_2$ are elements in $L^1(\mathbb{R})\cap L^2(\mathbb{R})$.
\end{lemma}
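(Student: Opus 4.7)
The plan is to leverage the fact that Maxwell's equations \eqref{Maxwell's equations 1.5d} have already been established in \autoref{Recovery of Maxwell's equations 1.5d}, together with the regularity of the potentials $\phi$ and $\psi$. The $L^1$ membership was proved in the preceding lemma, so the only work is to obtain the $L^2$ estimates, and these come for free from the elliptic relations connecting $\rho,j_1,j_2$ to $\phi$ and $\psi$.

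First I would treat $\rho$. By Gauss's equation \eqref{Gauss's equation 1.5d} we have $\rho=-\partial_x^2\phi$ in $\Omega$, and since $f^\pm$ are supported in $\Omega\times\mathbb{R}^2$ we have $\rho\equiv 0$ outside $\Omega$. Because $\phi\in H^2_{0,n}(\Omega)$, it follows that $\partial_x^2\phi\in L^2(\Omega)$, so $\rho\in L^2(\mathbb{R})$. Next, for $j_1$, equation \eqref{Ampere's equation 2 1.5d} yields $j_1=\lambda\partial_x\phi$ in $\Omega$, and as above $j_1$ vanishes outside $\Omega$; since $\partial_x\phi\in H^1(\Omega)\subset L^2(\Omega)$, we get $j_1\in L^2(\mathbb{R})$.

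For $j_2$ the argument is even simpler: by Amp\`ere's equation \eqref{Ampere's equation 1 1.5d}, $j_2=(-\partial_x^2+\lambda^2)\psi$ on all of $\mathbb{R}$, and since $\psi\in H^2(\mathbb{R})$ both $\partial_x^2\psi$ and $\lambda^2\psi$ lie in $L^2(\mathbb{R})$, so $j_2\in L^2(\mathbb{R})$. Combined with the previously established $L^1$ bounds, this gives $\rho,j_1,j_2\in L^1(\mathbb{R})\cap L^2(\mathbb{R})$.

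There is no real obstacle here; the statement is essentially a bookkeeping consequence of the Maxwell system \eqref{Maxwell's equations 1.5d} combined with the regularity built into the functional setup for $\phi\in H^2_{0,n}(\Omega)$ and $\psi\in H^2(\mathbb{R})$. The only subtle point is to remember that $\rho$ and $j_1$ are supported in $\Omega$ (so that the Neumann/compactly supported representation used in the proof of \autoref{Recovery of Maxwell's equations 1.5d} really does give an $L^2$ function on all of $\mathbb{R}$, rather than something with non-integrable tails), and this is already guaranteed by the compact support of $\mu^\pm$ in $x$.
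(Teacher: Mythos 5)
Your proof is correct and follows exactly the paper's (much terser) argument: the $L^1$ membership comes from integrating $f^\pm\in L^1$ in $\+v$, and the $L^2$ membership follows from Maxwell's equations \eqref{Maxwell's equations 1.5d} together with $\phi\in H^2_{0,n}(\Omega)$, $\psi\in H^2(\mathbb{R})$ and the compact support of $\rho,j_1$ in $\Omega$. No substantive difference from the paper's proof.
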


\begin{proof}
This follows from Maxwell's equations and the regularity of $\psi,\phi$ which are in $H^2(\mathbb{R})$ and $H^2_{0,n}(\Omega)$ respectively.
 %
%
\end{proof}

%
%
%
%
%
%
%
%
%

This concludes the proof of \autoref{theorem1}.

\subsection{The cylindrically symmetric case}
\subsubsection{Existence of a non-trivial kernel of the equivalent problem}
The proof of the existence of a non-trivial kernel in the cylindrically symmetric case is in complete analogy to the one in the $1.5d$ case presented in \autoref{sec:exist non triv ker 1.5d} and is therefore omitted.

\subsubsection{Existence of a growing mode}

Let $\lambda>0$ and $u=\begin{bmatrix}\?A_\theta&\varphi&\?A_{rz}\end{bmatrix}^T\in H^2_\theta(\mathbb{R}^3;\mathbb{R}^3)\times H^2_{cyl}(\mathbb{R}^3)\times H^2_{rz}(\mathbb{R}^3;\mathbb{R}^3)$ be such that  \eqref{Formally definition of M 2} is satisfied, i.e. $\Mt u=0$. Let $H^2_{cyl}(\mathbb{R}^3;\mathbb{R}^3)\ni\?A=\?A_\theta+\?A_{rz}$ as in \eqref{eq:A-decompose} and define
	\[
	\?E=-\nabla\varphi \qquad  \qquad \?B=\nabla\times\?A
	\]
(which each lie in $H^1_{cyl}(\mathbb{R}^3;\mathbb{R}^3)\subseteq H^1(\mathbb{R}^3;\mathbb{R}^3)$). Furthermore, define
	\[f^\pm=\pm\mu_e^\pm\varphi\pm r\mu_p^\pm(\?A\cdot\+e_\theta)\pm\mu_e^\pm\lambda(\lambda+\Dt)^{-1}(-\varphi+\?A\cdot\+{\hat{v}}).\]

As in the $1.5d$ case we begin by establishing that $f^\pm$ are integrable and satisfy the linearised Vlasov and continuity equations. The proof of this result is analogous to the corresponding results in the $1.5d$ case, so is omitted.
\begin{lemma}\label{cont eq 3d}
The functions $f^\pm$ solve the linearised Vlasov equations \eqref{RVM 3D linearised1} in the sense of distributions, and belong to $L^1(\mathbb{R}^3\times\mathbb{R}^3)$. Furthermore, the charge and current densities $\rho$ and $\?j$ defined by
\begin{equation*}
	\rho=\int (f^+-f^-)\;d\+v \qquad \?j=\int \+{\hat{v}} (f^+-f^-)\;d\+v,
 \end{equation*}
 belong to $L^1(\mathbb{R}^3)$ and $L^1(\mathbb{R}^3;\mathbb{R}^3)$, respectively, and satisfy the continuity equation $\lambda\rho+\nabla\cdot\?j=0$ in the sense of distributions.
\end{lemma}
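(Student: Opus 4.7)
The plan is to follow the three-step argument given for the $1.5d$ case in \autoref{cont eq 1.5d} and the two lemmas that follow it, since the present statement simply bundles those conclusions into one. All three parts flow from the definition of $f^\pm$ in \eqref{f in 3d}. For the linearised Vlasov equation, I would apply $(\lambda + \Dt)$ directly to \eqref{f in 3d}: since $\mu_e^\pm, \mu_p^\pm \in \ker(\Dt)$ (being functions of the conserved quantities $e^\pm,p^\pm$), and since $(\lambda+\Dt)$ cancels $(\lambda+\Dt)^{-1}$ in the last summand, the result is precisely the right-hand side of \eqref{RVM 3D linearised1}. Because $\varphi\in H^2_{cyl}$, $\?A\in H^2_{cyl}(\mathbb{R}^3;\mathbb{R}^3)$, and $\mu_{e/p}^\pm$ satisfy \eqref{eq:int-condition}, $f^\pm$ lies in $\Ns_\pm$ and in the domain of $\Dt$, so the identity holds pointwise (and therefore distributionally).

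For $L^1$ integrability of $f^\pm$, each summand of \eqref{f in 3d} is handled separately. For $\mu_e^\pm\varphi$, the inner integral $\int|\mu_e^\pm|\,d\+v$ is uniformly bounded by $\int w^\pm\,d\+v<\infty$ (using $\alpha>3$); since $\mu_e^\pm$ has compact $\+x$-support in $\Omega$, Cauchy--Schwarz gives $\|\mu_e^\pm\varphi\|_{L^1(\mathbb{R}^3\times\mathbb{R}^3)}\lesssim |\Omega|^{1/2}\|\varphi\|_{L^2(\Omega)}<\infty$. The term $r\mu_p^\pm(\?A\cdot\?e_\theta)$ is treated identically, using boundedness of $r$ on $\Omega$. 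The third summand is estimated using that $\lambda(\lambda+\Dt)^{-1}$ is a contraction on $\Ns_\pm$ by \autoref{properties of D 2}(a), together with the same Cauchy--Schwarz device on the compact set $\Omega$. Integrability of $\rho$ and $\?j$ then follows from Fubini and $|\+{\hat{v}}|\le 1$.

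For the continuity equation, I would integrate the linearised Vlasov equations in $\+v$ and subtract the ``$-$'' from the ``$+$'' equation. Working in Cartesian coordinates (where $\Dt = \+{\hat{v}}_{xyz}\cdot\nabla_{\+x} \pm (\?E^0+\?E^{ext}+\+{\hat{v}}_{xyz}\times(\?B^0+\?B^{ext}))\cdot\nabla_{\+v}$), the $\nabla_{\+v}$ part integrates to zero (using the $w^\pm$-decay at infinity in $\+v$) and the $\nabla_{\+x}$ part becomes $\nabla\cdot\?j$ on the left-hand side. On the right-hand side of \eqref{RVM 3D linearised1}, the $\lambda$ contributions $\pm\lambda\int\mu_e^\pm\varphi\,d\+v$ cancel between the first and third summands of \eqref{f in 3d}; the $A_\theta$ couplings combine via \eqref{eq:perfect deriv 3d} $\int(r\mu_p^\pm+\hat{v}_\theta\mu_e^\pm)\,d\+v=0$; and the residual $A_r, A_z$ contributions vanish by joint parity of $\mu_e^\pm$ in $(v_r,v_z)$, since $\mu^\pm$ depends on $\+v$ only through $\langle\+v\rangle$ and $v_\theta$. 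The $\Dt$-terms on the right-hand side reduce analogously to $\nabla_{\+x}$ of integrals that vanish after summing over $\pm$ and applying the same parity.

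The main obstacle is the bookkeeping in the continuity-equation step: tracking parity in $(v_r,v_z)$ and applying \eqref{eq:perfect deriv 3d} in the right places. The apparent geometric singularities at $r=0$ in the cylindrical form of $\Dt$ \eqref{eq:dt} are best avoided by carrying out all integration-by-parts computations in Cartesian form, where the divergence structure is manifest; the low regularity of $f^\pm$ forces interpretation in the distributional sense, but pairing against a smooth compactly supported test function and applying Fubini makes each step rigorous.
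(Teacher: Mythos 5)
Your proposal follows the paper's intended route (the paper simply cites the $1.5d$ analogy and omits the proof), and the Vlasov-equation and $L^1$-integrability steps are handled correctly. The continuity-equation plan --- integrate \eqref{RVM 3D linearised1} in $\+v$, verify the right-hand side integrates to zero using \eqref{eq:perfect deriv 3d} together with parity in $(v_r,v_z)$, then subtract the two signs --- is also the right one.

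However your treatment of the $\Dt$-terms is imprecise and, as stated, has a gap. You write them as ``$\nabla_{\+x}$ of integrals that vanish \ldots\ by parity''. If one does pull $\nabla_{\+x}$ outside one obtains $\nabla\cdot\int\+{\hat{v}}\left(\mu_e^\pm\varphi + r\mu_p^\pm(\?A\cdot\?e_\theta)\right)d\+v$, and the $\theta$-component of this vector does \emph{not} vanish by parity --- only the $r$- and $z$-components do. One must additionally observe that the surviving field has the form $g(r,z)\?e_\theta$, whose divergence is $r^{-1}\partial_\theta g = 0$ by cylindrical symmetry. The cleaner route, which is the exact $1.5d$ analogue (compare $\D(\mu_e^\pm\phi)=\mu_e^\pm\hat{v}_1\partial_x\phi$ in the paper's $1.5d$ proof), is to keep $\Dt$ inside: since $\mu_e^\pm,\mu_p^\pm\in\ker(\Dt)$ and both $\varphi$ and $r(\?A\cdot\?e_\theta)$ are cylindrically symmetric functions of $\+x$ alone, one has $\Dt(\mu_e^\pm\varphi)=\mu_e^\pm\Dt\varphi=\mu_e^\pm(\hat{v}_r\partial_r+\hat{v}_z\partial_z)\varphi$ and $\Dt\left(r\mu_p^\pm(\?A\cdot\?e_\theta)\right)=\mu_p^\pm(\hat{v}_r\partial_r+\hat{v}_z\partial_z)\left(r(\?A\cdot\?e_\theta)\right)$, each manifestly odd in $(v_r,v_z)$, so each $d\+v$-integral vanishes directly. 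Also, the sum over $\pm$ is not needed for this cancellation: as in the $1.5d$ case, the right-hand side integrates to zero for each sign individually, and the subtraction over $\pm$ only enters to assemble $\rho$ and $\?j$ on the left-hand side. With this fix the argument is complete and matches the paper's omitted proof.
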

Next we recover Maxwell's equations from \eqref{Formally definition of M 2} and the continuity equation.

\begin{lemma}\label{Elimination of Ampere 2 3d}
Both the Lorenz gauge condition $\lambda\varphi+\nabla\cdot\?A=0$ (see \eqref{eq:lin-lorenz-gauge})   and Maxwell's equations \eqref{eq:maxwell-lorenz} are satisfied.
\end{lemma}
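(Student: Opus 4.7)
The plan is to get the Maxwell equations \eqref{eq:maxwell-lorenz} essentially for free from the equation $\Mt u=0$, and then to derive the Lorenz gauge condition as a consequence of the Maxwell equations together with the continuity equation proved in \autoref{cont eq 3d}.

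First I would observe that the system $\Mt u=0$ was built precisely to encode \eqref{eq:Gauss reformulated 3d} and \eqref{eq:Ampere reformulated 3d}, which under the ansatz $e^{\lambda t}$ are
\begin{equation*}
\lambda^2\varphi-\Delta\varphi=\int(f^+-f^-)\,d\+v=\rho, \qquad
\lambda^2\?A-\+\Delta\?A=\int(f^+-f^-)\+{\hat v}\,d\+v=\?j.
\end{equation*}
The decomposition $\?A=\?A_\theta+\?A_{rz}$ exhausts all cylindrically symmetric $L^2_{cyl}(\mathbb{R}^3;\mathbb{R}^3)$ vector fields (as noted after \eqref{eq:A-decompose}), and the vector Laplacian $\+\Delta$ reduces on this decomposition, so the two block rows of $\Mt u=0$ corresponding to the $\theta$ and $rz$ components of $\?A$ together give the full vector wave equation. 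Hence \eqref{eq:maxwell-lorenz-charge}-\eqref{eq:maxwell-lorenz-current} hold in the sense of distributions (and in fact in $L^2$).

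Next I would derive the Lorenz gauge. Apply $\nabla\cdot$ to $\lambda^2\?A-\+\Delta\?A=\?j$. Since $\?A\in H^2(\mathbb{R}^3;\mathbb{R}^3)$, the divergence $\nabla\cdot\?A$ lies in $H^1(\mathbb{R}^3)$ and $\nabla\cdot$ commutes with $\+\Delta$, yielding
\begin{equation*}
\lambda^2(\nabla\cdot\?A)-\Delta(\nabla\cdot\?A)=\nabla\cdot\?j.
\end{equation*}
By the continuity equation from \autoref{cont eq 3d}, $\nabla\cdot\?j=-\lambda\rho$. Substituting $\rho=\lambda^2\varphi-\Delta\varphi$ from the scalar Maxwell equation already obtained, and setting $u_L:=\nabla\cdot\?A+\lambda\varphi$, I get
\begin{equation*}
(-\Delta+\lambda^2)u_L=0.
\end{equation*}
Because $\varphi\in H^2(\mathbb{R}^3)$ and $\nabla\cdot\?A\in H^1(\mathbb{R}^3)$, we have $u_L\in L^2(\mathbb{R}^3)$, and the operator $-\Delta+\lambda^2$ is strictly positive on $L^2(\mathbb{R}^3)$ for any $\lambda>0$. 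Therefore $u_L=0$, which is exactly the Lorenz gauge condition $\nabla\cdot\?A+\lambda\varphi=0$.

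The main obstacles are bookkeeping rather than conceptual: one must check that the divergence manipulations are justified in the correct distributional/Sobolev sense (which is fine because the potentials live in $H^2$), and that $\rho,\?j$ from \autoref{cont eq 3d} really coincide with the right-hand sides appearing in $\Mt u=0$, which is immediate from the definition \eqref{f in 3d} of $f^\pm$ used to construct the operators in \eqref{eq:operators3d}. No heavy estimates are needed.
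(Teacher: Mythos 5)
Your proposal is correct and follows essentially the same route as the paper: the Maxwell equations \eqref{eq:maxwell-lorenz} are read off directly from the rows of $\Mt u=0$ via the definitions \eqref{eq:operators3d}, and the Lorenz gauge is obtained by combining them with the continuity equation of \autoref{cont eq 3d} to get $(-\Delta+\lambda^2)[\lambda\varphi+\nabla\cdot\?A]=0$ and invoking invertibility of $-\Delta+\lambda^2$. No gaps.
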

\begin{proof}
In the same way as the 1.5d case, \eqref{eq:maxwell-lorenz-charge} is obtained from the second line of \eqref{Formally definition of M 2},\eqref{eq:m lambda 2 3d} and the definitions, \eqref{eq:operators3d}, of the operators therein. Similarly, \eqref{eq:maxwell-lorenz-current} is obtained from the first and third lines of \eqref{Formally definition of M 2},\eqref{eq:m lambda 2 3d}.

It remains to show that the Lorenz gauge condition holds. Using \eqref{eq:maxwell-lorenz-charge} and \eqref{eq:maxwell-lorenz-current} in the continuity equation, we have, in the sense of distributions,
\begin{equation*}
\begin{aligned}
0&=\lambda(-\Delta+\lambda^2)\varphi+\nabla\cdot[(-\+\Delta+\lambda^2)\?A]\\
&=(-\Delta+\lambda^2)[\lambda \varphi+\nabla\cdot\?A].
\end{aligned}
\end{equation*}
As $-\Delta+\lambda^2$ is invertible, this implies that $\lambda \varphi+\nabla\cdot\?A=0$.
\end{proof}

This concludes the proof of \autoref{theorem2}.

\section{Properties of the operators}\label{sec:properties}
Here we gather all important properties of the operators defined in \autoref{sec:equiv}, as well as the operators defined in \eqref{the 0 operators 1.5d} and  \eqref{eq:the 0 operators 3d}.

\subsection{The \texorpdfstring{$1.5d$}{1.5d} case}\label{sec:prop operators 1.5d}
As the only dependence on $\lambda$ is through the operators $\Q$ we start with them:

\begin{lemma}\label{properties of Q}
In the respective spaces $\Ls_\pm$, $\Q$ satisfy:
\begin{enumerate}[(a)]
 \item $\norm{\Q}_{\bounded{\Ls_\pm}}=1$.
\item $\Q$ can be extended from $\lambda>0$ to $\operatorname{Re}\lambda>0$ as a holomorphic operator valued function. In particular it is continuous for $\lambda>0$ in operator norm topology.
\item As $\mathbb{R}\ni\lambda\to\infty$, $\Q\tos1$, and for $u\in\dom{\D}$, $\norm{(\Q-1)u}_{\Ls_\pm}\le\norm{\D u}_{\Ls_\pm}/\lambda$.
\item As $\lambda\to0$, $\Q$ converges strongly to the projection operator $\Q[0]$ defined in \autoref{def:projection operators 1.5d}.
\item For any $\lambda\ge0$, $\Q$ is symmetric.
\end{enumerate}
\end{lemma}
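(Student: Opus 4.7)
The natural approach is to run everything through the functional calculus for the skew-adjoint operator $\D$. By \autoref{properties of D}(a), $\D$ is skew-adjoint on $\Ls_\pm$, so its spectrum lies in $i\mathbb{R}$, and the operator $-\D^2$ is self-adjoint and non-negative with spectrum in $[0,\infty)$. I will write $-\D^2=\int_0^\infty t\,dE(t)$ for the associated projection-valued measure, so that
\begin{equation*}
\Q=\lambda^2(\lambda^2-\D^2)^{-1}=\int_0^\infty\frac{\lambda^2}{\lambda^2+t}\,dE(t),\qquad\lambda>0.
\end{equation*}
Once this representation is in place, the five claims reduce to elementary properties of the scalar functions $\phi_\lambda(t)=\lambda^2/(\lambda^2+t)$ on $[0,\infty)$.

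For (a) and (e), $\phi_\lambda$ is real valued and bounded by $1$ (with supremum $1$ attained at $t=0$), so $\Q$ is self-adjoint and $\norm{\Q}_{\bounded{\Ls_\pm}}=\sup_{t\ge 0}\phi_\lambda(t)=1$; this is in particular stronger than the symmetry asserted in (e). For (b), I will use the factorisation $\Q=\lambda^2(\lambda-\D)^{-1}(\lambda+\D)^{-1}$, which is legitimate because the spectrum of $\D$ is purely imaginary so both resolvents exist and are holomorphic $\bounded{\Ls_\pm}$-valued functions on $\{\operatorname{Re}\lambda\neq 0\}$ by the standard theory of resolvents (see \cite{Kato1995}). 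Norm continuity on $(0,\infty)$ is then immediate from holomorphy.

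For (c), the spectral representation gives
\begin{equation*}
(\Q-1)u=\int_0^\infty\frac{-t}{\lambda^2+t}\,dE(t)u,
\end{equation*}
and dominated convergence in the scalar spectral measure $d\langle E(t)u,u\rangle$ (with dominating function $1$) yields $\Q u\to u$ as $\lambda\to\infty$. For the quantitative bound, I compare the spectral integrands: for $u\in\dom{\D}$,
\begin{equation*}
\norm{(\Q-1)u}^2=\int_0^\infty\frac{t^2}{(\lambda^2+t)^2}\,d\langle E(t)u,u\rangle,\qquad\norm{\D u}^2=\int_0^\infty t\,d\langle E(t)u,u\rangle,
\end{equation*}
so the inequality $t^2/(\lambda^2+t)^2\le t/\lambda^2$ (equivalent to $\lambda^2 t\le(\lambda^2+t)^2$, a one-line AM--GM) gives $\norm{(\Q-1)u}\le\norm{\D u}/\lambda$. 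For (d), $\phi_\lambda(t)\to\mathbbm{1}_{\{0\}}(t)$ pointwise as $\lambda\to 0$ while remaining bounded by $1$, so dominated convergence shows $\Q\tos E(\{0\})$; since $\D$ is skew-adjoint, $E(\{0\})$ is precisely the orthogonal projection onto $\ker(-\D^2)=\ker(\D)$, which is $\Q[0]$ by \autoref{def:projection operators 1.5d}.

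The only step requiring care is (d), where one needs the identification $\ker(-\D^2)=\ker(\D)$; this is a soft consequence of skew-adjointness ($\D^2 u=0\Rightarrow\ip{\D^2 u}{u}=-\norm{\D u}^2=0$), so no genuine obstacle arises. All other steps are direct applications of the spectral theorem, and the argument is manifestly insensitive to the choice of weight $w^\pm$ because the spectral calculus is carried out within the abstract Hilbert space $\Ls_\pm$, consistent with \autoref{rem:Q0-does-not-depend-upon-weight}.
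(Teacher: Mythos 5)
Your argument is essentially the paper's: both run all five claims through the spectral theorem/functional calculus for the skew-adjoint operator $\D$. The only structural difference is cosmetic: for (d) the paper works with the spectral measure of $-i\D$, first showing $\lambda(\lambda\pm\D)^{-1}\tos\Q[0]$ and then composing, $\Q=\lambda(\lambda-\D)^{-1}\lambda(\lambda+\D)^{-1}\tos(\Q[0])^2=\Q[0]$, whereas you integrate the single function $\lambda^2/(\lambda^2+t)$ against the spectral measure of $-\D^2$ and pass to the limit in one step; both are correct, and your route is marginally more streamlined.

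One small point to repair in (a): the functional calculus gives $\norm{\Q}_{\bounded{\Ls_\pm}}=\sup\{\lambda^2/(\lambda^2+t)\,:\,t\in\operatorname{sp}(-\D^2)\}$, not the supremum over all of $[0,\infty)$, so your computation as written only yields $\norm{\Q}\le1$. Equality requires knowing that $0\in\operatorname{sp}(-\D^2)$, i.e.\ that $\ker\D$ is nontrivial. This is immediate in the present setting, since $\mathbbm{1}_\Omega\in\Ls_\pm$ (the weight is integrable) and constants in $x$ lie in $\ker\D$ -- this is exactly the paper's observation that $\Q\mathbbm{1}=\mathbbm{1}$ -- but it should be stated, as your pointwise supremum at $t=0$ does not by itself see the spectrum of $-\D^2$. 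The remaining items, including the identification $E(\{0\})=$ projection onto $\ker(-\D^2)=\ker(\D)=\operatorname{ran}\Q[0]$ and the trivial extension of (e) to $\lambda=0$, are fine.
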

\begin{proof}
$\norm{\Q}_{\Ls_\pm}\le1$ follows from $\norm{(\D+\lambda)^{-1}}_{\Ls_\pm}\le\frac1{|\lambda|}$ as $i\D$ is self-adjoint and the nearest point of the spectrum of $\D$ is $0$. That $\norm{\Q}_{\bounded{\Ls_\pm}}=1$ is proved by observing that $\Q1=1$. Part (b) follows from the analyticity of resolvents as functions of $\lambda$. For (c) we compute using functional calculus for $u\in\dom{\D}$:
\begin{align*}
\norm{\Q u-u}_{\Ls_\pm}&=\norm{\left(\frac{\lambda^2}{\lambda^2-\D^2}-1\right)u}_{\Ls_\pm}=\norm{\frac{\D^2}{\lambda^2-\D^2}u}_{\Ls_\pm}\\
&\le\norm{\frac{\D}{\lambda+\D}}_{\bounded{\Ls_\pm}}\norm{\frac{1}{\lambda-\D}}_{\bounded{\Ls_\pm}}\norm{\D u}_{\Ls_\pm}\\
&\le1\cdot\frac1{\lambda}\cdot\norm{\D u}_{\Ls_\pm}\to0\text{\quad as $\lambda\to\infty$}
\end{align*}
and deduce the strong convergence $\Q\tos1$ by the density of $\dom{\D}$ in $\Ls_\pm$.

For (d) we introduce the spectral measure of the selfadjoint operator $-i\D$, which we denote by $M_\pm$. The projection onto $\ker(\D)$ is then $\Q[0]=M_\pm(\{0\})=\int_\mathbb{R}\chi(\alpha)\,dM_\pm(\alpha)$ where $\chi(0)=1$ and $\chi(\alpha)=0$ when $\alpha\neq0$. Recall that $\lambda(\lambda+\D)^{-1}=\int_\mathbb{R}\frac{\lambda}{\lambda+i\alpha}\,dM_\pm(\alpha)$. We compute for $u\in\Ls_\pm$,
\begin{align*}
\norm{\lambda(\lambda+\D)^{-1} u-M_\pm(\{0\})u}^2_{\Ls_\pm}&=\norm{\int_\mathbb{R}\left(\frac\lambda{\lambda+i\alpha}-\chi(\alpha)\right)\,dM_\pm(\alpha)u}^2_{\Ls_\pm}\\
&=\int_\mathbb{R}\left|\frac\lambda{\lambda+i\alpha}-\chi(\alpha)\right|^2\,d\norm{M_\pm(\alpha)u}^2_{\Ls_\pm},
\end{align*}
the last equality being due to orthogonality of spectral projections. This now tends to $0$ as $\lambda\to0$ by the dominated convergence theorem. Replacing $\D$ with $-\D$, which has the same kernel, we deduce that $\lambda(\lambda-\D)^{-1}\tos\Q[0]$. Finally we have $\Q=\lambda(\lambda-\D)^{-1}\lambda(\lambda+\D)^{-1}\tos(\Q[0])^2=\Q[0]$ by the composition of strong operator convergence.
To show (e) for $\lambda>0$ we simply note that $\D^2$ are selfadjoint, and extend to $\lambda=0$ by the strong operator convergence.
\end{proof}

These results carry through to the other operators.
\begin{lemma}\label{properties of J}
The operators $\J$ and $\oper{B}^\lambda$ have the properties:
\begin{enumerate}[(a)]
\item For all $\lambda\in[0,\infty)$, $\oper{B}^\lambda$ maps $L^2(\mathbb{R})$ into $L^2_0(\Omega)$ and $\J$ maps $L^2(\mathbb{R})\times L^2_0(\Omega)\to L^2(\mathbb{R})\times L^2_0(\Omega)$.
\item The familes $\{\J\}_{\lambda\in[0,\infty)}$ and $\{\oper{B}^\lambda\}_{\lambda\in[0,\infty)}$ are both uniformly bounded in operator norm.
\item Both $(0,\infty)\ni\lambda\mapsto\J$ and $(0,\infty)\ni\lambda\mapsto\oper{B}^\lambda$ are continuous in  the operator norm topology.
\item As $\lambda\to0$, $\J\to {\J[0]}$ and $\oper{B}^\lambda\to\oper{B}^0$ in the strong operator topology.
\item For any $\lambda\ge0$ the operator $\J$ is symmetric.
\item Let $\+{\oper{P}}$ be the multiplication operator acting in $L^2(\mathbb{R})\times L^2_0(\Omega)$ defined by
	\begin{equation*} 
	\+{\oper{P}}=\begin{bmatrix}\mathbbm{1}_\Omega&0\\0&\mathbbm{1}_\Omega\end{bmatrix}
	\end{equation*}
where $\mathbbm{1}_\Omega$ is the indicator function of the set $\Omega$. Then $\J=\J\+{\oper{P}}$.
\end{enumerate}
\end{lemma}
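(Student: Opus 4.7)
The plan is to reduce each of the six claims to a corresponding property of the averaging operators $\Q$ established in \autoref{properties of Q}, since both $\oper{B}^\lambda$ and $\J$ depend on $\lambda$ only through $\Q$. The recurring ingredients I will use are: (i) the uniform bound $\norm{\Q}_{\bounded{\Ls_\pm}}=1$; (ii) the commutation of $\Q$ with multiplication by $\mu_e^\pm$, and also by the weight $w^\pm$ itself, which holds because these are functions of the conserved quantities $(e^\pm,p^\pm)$ and hence lie in $\ker(\D)$; and (iii) the integrability implied by \eqref{eq:int-condition}, which gives $\int(|\mu_e^\pm|^2+|\mu_p^\pm|^2)/w^\pm\,d\+v\le 2\int w^\pm\,d\+v<\infty$ pointwise in $x$.

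Parts (a) and (f) follow almost immediately from the compact $x$-support of $\mu^\pm$, $\mu_e^\pm$ and $\mu_p^\pm$: every term in the defining formulas of $\oper{B}^\lambda$ and $\J$ carries one of these factors, so the outputs are supported in $\Omega$, and (f) is then automatic from the convention that $h\in L^2(\mathbb{R})$ is embedded in $\Ls_\pm$ via $h\mapsto\mathbbm{1}_\Omega h$. The remaining content of (a) is that $\oper{B}^\lambda h\in L^2_0(\Omega)$: integrating over $\Omega$ and using self-adjointness of $\Q$ on $\Ls_\pm$ together with $\Q\mu_e^\pm=\mu_e^\pm$ to transfer $\Q$ off of $[\hat{v}_2 h]$ yields
\[
\int_\Omega(\oper{B}^\lambda h)(x)\,dx=\int_\Omega h(x)\sum_{\pm}\int(\mu_p^\pm+\hat{v}_2\mu_e^\pm)\,d\+v\,dx,
\]
which vanishes by the perfect-derivative identity \eqref{eq:perfect deriv}.

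For (b), Cauchy--Schwarz in $d\+v$ based on ingredient (iii) gives a pointwise-in-$x$ bound on each summand of $\oper{B}^\lambda h$ and $\J\+u$ by $\Ls_\pm$-norms of expressions of the form $\Q f$, which are then controlled by $\norm{\+u}$ via $\norm{\Q}\le 1$; integrating in $x$ yields the uniform operator bound. Pulling a difference $\Q-\Q[\lambda']$ out of the $v$-integral via the same estimate reduces operator-norm continuity (c) for $\lambda>0$ to the norm continuity of $\Q$ from \autoref{properties of Q}(b), and replacing it with $\Q-\Q[0]$ reduces the strong convergence (d) to $\Q\tos\Q[0]$ from \autoref{properties of Q}(d). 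Symmetry (e) follows from the manifestly symmetric form $\sum_\pm\int\+a\,\mu_e^\pm(\Q-1)(\+a\cdot\+u)\,d\+v$ with $\+a=[\hat{v}_2,-1]^T$, combined with self-adjointness of $\Q$ on $\Ls_\pm$; the one subtle point, which I expect to be the main obstacle, is that the pairing on $L^2(\mathbb{R})\times L^2_0(\Omega)$ is unweighted while $\Q$ is self-adjoint with respect to the weight $w^\pm$. This is handled by inserting $w^\pm$ inside the integral and using that both $\mu_e^\pm/w^\pm$ and $w^\pm$ commute with $\Q$ (by ingredient (ii)) to shuffle the weight back out. Beyond this measure-theoretic bookkeeping between $L^2(\mathbb{R})$, $L^2_0(\Omega)$ and $\Ls_\pm$, no genuine obstacle arises.
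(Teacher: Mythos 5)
Your proposal is correct and follows essentially the same route as the paper: everything is reduced to the properties of $\Q$ (uniform bound, norm continuity, strong convergence to $\Q[0]$, symmetry) via the factorization through the weighted spaces $\Ls_\pm$, together with the decay and compact-support assumptions on $\mu^\pm,\mu_e^\pm,\mu_p^\pm$. In fact you supply slightly more detail than the paper on two points it leaves implicit, namely the mean-zero property needed for the range $L^2_0(\Omega)$ (via \eqref{eq:perfect deriv}) and the bookkeeping between the unweighted pairing and the $w^\pm$-weighted self-adjointness of $\Q$, both handled correctly by commuting functions of $(e^\pm,p^\pm)$ past $\Q$.
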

\begin{proof}
Part (a) is easily verifiable. We note that due to the relation 
\begin{equation*}
\oper{B}^\lambda=-\begin{bmatrix}0&1\end{bmatrix}\J\begin{bmatrix}
1\\0
\end{bmatrix}
\end{equation*}
it is sufficient to prove the results for $\J$.
We observe that due to the decay assumptions \eqref{eq:int-condition} on $\mu^\pm$, the moment 
\begin{equation*}
-\sum_{\pm}\int\mu^\pm\frac{1+v_1^2}{\<v^3}\,d\+v
\end{equation*}
is bounded in $L^\infty(\mathbb{R})$ and is real valued, so it is a bounded symmetric multiplication operator from $L^2(\mathbb{R})$ to $L^2(\mathbb{R})$. Next we decompose the second part of $\J$ as
\begin{equation}\label{J formula for properties}
\sum_{\pm}\int\mu_e^\pm\oper{T}_\pm(\Q-1)\oper{T}_\pm^*\begin{bmatrix}
\psi\\\phi
\end{bmatrix}
\,d\+v
\end{equation}
where $\oper{T}_\pm:\Ls_\pm\times\Ls_\pm\to\Ls_\pm$ is multiplication by the vector $\begin{bmatrix}
\hat{v}_2&-1
\end{bmatrix}$, and we have used the natural (and bounded) inclusions from $L^2(\mathbb{R})$ and $L^2_0(\Omega)$ into $\Ls_\pm$. Clearly $\oper{T}_\pm$ are bounded and we know that $\Q$ have bound 1 by \autoref{properties of Q}. Finally, we note that due to the decay assumptions on $\mu^\pm_e$ and its compact support in $x$, that multiplication by $\mu^\pm_e$ followed by integration $d\+v$ is bounded from $\Ls_\pm$ to $L^2(\mathbb{R})$ and $L^2(\Omega)$. Therefore $\J$ has a uniform bound in operator norm. Parts (c) and (d) then follow from the corresponding results for $\Q$ in \autoref{properties of Q} using \eqref{J formula for properties}.  (e) is clear from the symmetry of $\Q$ and \eqref{J formula for properties}. Finally (f) follows from the compact spatial support of $\mu^\pm,\mu^\pm_e,\mu^\pm_p$ inside $\Omega$.
\end{proof}

\begin{lemma}[Properties of $\oper{A}_1^\lambda$ and $\oper{A}_2^\lambda$]\label{lem:prop A}
Let $0\leq\lambda  <\infty$.
	\begin{enumerate}[(a)]
	\item The operator $\oper{A}_1^\lambda $ is self-adjoint on $L^2_0(\Omega)$ and the operator $\oper{A}_2^\lambda $ is self-adjoint on $L^2(\mathbb{R})$ with the respective domains $H^2_{0,n}(\Omega)$ and $H^2(\mathbb{R})$. 
	\item Both $[0,\infty)\ni\lambda\mapsto\oper{A}_1^\lambda$ and $[0,\infty)\ni\lambda\mapsto\oper{A}_2^\lambda$  are continuous  in the norm resolvent topology.
	 
	\item The spectrum of $\oper{A}_1^\lambda$ is purely discrete. The spectrum of $\oper{A}_2^\lambda$ is discrete and made up of finitely many eigenvalues in $(-\infty,\lambda^2)$ and continuous (possibly with embedded eigenvalues) in $[\lambda^2,\infty)$.
	 
	\item There exist constants $\gamma>0$ and $\Lambda  >0$ such that for all $\lambda  \geq\Lambda  $, $\oper{A}_i^\lambda>\gamma$, $i=1,2$.
	
	\end{enumerate}
\end{lemma}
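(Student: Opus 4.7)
Part (a) follows from the Kato-Rellich theorem: the principal parts --- the Neumann Laplacian on $H^2_{0,n}(\Omega)\subset L^2_0(\Omega)$ and $-\partial_x^2+\lambda^2$ on $H^2(\mathbb{R})\subset L^2(\mathbb{R})$ --- are standard self-adjoint operators, and the $\Q$-dependent perturbations are bounded and symmetric by \autoref{properties of J}. Self-adjointness on the stated domains is then immediate.

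For part (b), on $(0,\infty)$ the operator-norm continuity of $\Q$ stated in \autoref{properties of Q}(b), inherited by $\J$ via \autoref{properties of J}(c), combined with the second resolvent identity gives norm resolvent continuity directly. The delicate case is $\lambda\to0$, where only strong convergence $\Q\tos\Q[0]$ is available. I would write
\[
(\oper{A}_i^\lambda-z)^{-1}-(\oper{A}_i^0-z)^{-1}=(\oper{A}_i^\lambda-z)^{-1}(\oper{K}_i^0-\oper{K}_i^\lambda)(\oper{A}_i^0-z)^{-1},
\]
denoting the bounded perturbation by $\oper{K}_i^\lambda$. By \autoref{properties of J}(f) the difference $\oper{K}_i^0-\oper{K}_i^\lambda$ factors through multiplication by $\mathbbm{1}_\Omega$, while $\mathbbm{1}_\Omega(\oper{A}_i^0-z)^{-1}$ maps $L^2$ into $H^2(\Omega)$ and is therefore compact by Rellich-Kondrachov. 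Composing a strongly convergent family with a compact operator yields norm convergence, which upgrades the strong convergence to norm resolvent convergence at $\lambda=0$.

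For part (c), $\oper{A}_1^\lambda$ is a bounded perturbation of the Neumann Laplacian on the bounded domain $\Omega$, which has compact resolvent, so $\oper{A}_1^\lambda$ has purely discrete spectrum. For $\oper{A}_2^\lambda$, the free part $-\partial_x^2+\lambda^2$ has essential spectrum $[\lambda^2,\infty)$, and the perturbation is relatively compact because it factors through $\mathbbm{1}_\Omega$ (\autoref{properties of J}(f)) and Rellich-Kondrachov applies on the bounded set $\Omega$. Weyl's theorem then preserves the essential spectrum, and the finiteness of the eigenvalues in $(-\infty,\lambda^2)$ follows from \autoref{lemma:weyl-finite} applied to $\oper{A}_2^\lambda-\lambda^2$.

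Part (d) is immediate for $\oper{A}_2^\lambda$: the bounded perturbation has a $\lambda$-independent norm bound $C$ by \autoref{properties of J}(b), so $\langle\oper{A}_2^\lambda h,h\rangle\ge(\lambda^2-C)\|h\|^2$ is positive once $\lambda^2>C+\gamma$. The main obstacle is $\oper{A}_1^\lambda$, which lacks an explicit $\lambda^2$ term. Here I would combine the Poincar\'e-Neumann inequality $\|\partial_xh\|^2\ge c_P\|h\|^2$ on $L^2_0(\Omega)$ with the quantitative bound $\|(\Q-1)h\|_{\Ls_\pm}\le\|\D h\|_{\Ls_\pm}/\lambda$ from \autoref{properties of Q}(c), noting that $\|\D h\|_{\Ls_\pm}\le C'\|\partial_xh\|$ when $h$ depends only on $x$ (since $\D h=\hat v_1\partial_xh$ and $\int w^\pm\,d\+v$ is uniformly bounded on $\Omega$). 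Cauchy-Schwarz in $\Ls_\pm$ together with Young's inequality then yields
\[
\langle\oper{A}_1^\lambda h,h\rangle\ge(1-\varepsilon)\|\partial_xh\|^2-\frac{C''}{\varepsilon\lambda^2}\|h\|^2\ge\Bigl((1-\varepsilon)c_P-\frac{C''}{\varepsilon\lambda^2}\Bigr)\|h\|^2,
\]
which is bounded below by a positive $\gamma$ once $\lambda$ is taken sufficiently large.
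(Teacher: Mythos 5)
Your proof is correct and mirrors the paper's proof step by step: Kato--Rellich for (a), the second resolvent identity combined with compactness of $\mathbbm{1}_\Omega(\oper{A}_i^\lambda-z)^{-1}$ to upgrade strong to norm convergence for (b), Weyl's theorem together with \autoref{lemma:weyl-finite} for (c), and the estimate $\norm{(\Q-1)h}\le\norm{\D h}/\lambda$ together with the Neumann spectral gap on $L^2_0(\Omega)$ for (d). The only cosmetic differences are that you invoke the operator-norm continuity of $\J$ on $(0,\infty)$ before switching to the compactness argument at $\lambda=0$ (the paper runs the compactness argument uniformly for all $\lambda,\sigma\geq0$), and that you insert a Young inequality in (d) where the paper absorbs the cross term directly into $\norm{\partial_x h}^2$ via the Poincar\'e constant.
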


\begin{proof}
Clearly $-\partial_x^2$ is symmetric. The perturbative terms are symmetric as well since $\Q$ are symmetric, see \autoref{properties of Q}. Self-adjointness is guaranteed by standard arguments, such as the Kato-Rellich theorem.

Let us prove (b), considering first $\oper{A}^\lambda_2$. It is sufficient to prove that $(\oper{A}^{\lambda}_2-i)^{-1}\to(\oper{A}^\sigma_2-i)^{-1}$ in operator norm as $\lambda\to\sigma$ (with $\lambda,\sigma\ge0$). 
We use the second resolvent identity to obtain
\begin{equation*}
\begin{aligned}
(\oper{A}^{\lambda}_2-i)^{-1}-(\oper{A}^\sigma_2-i)^{-1}&=(\oper{A}^{\lambda}_2-i)^{-1}(\oper{A}^{\sigma}_2-\oper{A}^{\lambda}_2)(\oper{A}^\sigma_2-i)^{-1}\\
&=(\oper{A}^{\lambda}_2-i)^{-1}((\sigma^2-\lambda^2)-(\oper{J}_{11}^\sigma-\oper{J}_{11}^\lambda))(\oper{A}^\sigma_2-i)^{-1}
\end{aligned}
\end{equation*}
where $\oper{J}_{11}^\lambda$ is the upper left component of $\+{\oper{J}}^\lambda$ written in block matrix form. Hence, as the resolvents are each bounded in operator norm by $1$,
\begin{equation*}
\norm{(\oper{A}^{\lambda}_2-i)^{-1}\!-(\oper{A}^\sigma_2-i)^{-1}}_{\bounded{L^2(\mathbb{R}}}\!\!\le |\sigma^2-\lambda^2|+\norm{(\oper{J}_{11}^\sigma-\oper{J}_{11}^\lambda)(\oper{A}^\sigma_2-i)^{-1}}_{\bounded{L^2(\mathbb{R}}}.
\end{equation*}
It thus suffices using \autoref{properties of J}(f) to show that $(\oper{J}_{11}^\sigma-\oper{J}_{11}^\lambda)\oper{P}(\oper{A}^\sigma_2-i)^{-1}\to0$ in operator norm, where $\oper{P}$ is the multiplication operator on $L^2(\mathbb{R})$ given by the indicator function of the set $\Omega$. $\oper{P}$ is relatively compact with respect to $-\partial_x^2$ by the Rellich theorem, and hence also relatively compact with respect to $\oper{A}^\sigma_2$ as it also has the domain $H^2(\mathbb{R})$ by part (a). Hence $\oper{P}(\oper{A}^\sigma_2-i)^{-1}$ is compact, which allows us to upgrade the strong convergence $\oper{J}_{11}^\lambda\tos \oper{J}_{11}^\sigma$ given by \autoref{properties of J} to operator norm convergence. The norm resolvent continuity of $\oper{A}^\lambda_2$ follows. The proof for $\oper{A}^\lambda_1$ is analogous, but lacking the $|\sigma^2-\lambda^2|$ term.

Part (c) is simple: both operators have a differential part (Laplacian) and a relatively compact perturbation. Hence both conclusions follow from Weyl's theorem  \cite[IV, Theorem 5.35]{Kato1995}. The finiteness of the discrete spectrum below the essential part in the case of $\oper{A}_2^\lambda$ is a consequence of \autoref{lemma:weyl-finite}. For part (d), we begin with $\oper{A}^\lambda_1$. Fix an arbitrary $h\in H^2_{0,n}(\Omega)$, then we compute,
\begin{equation*}
\ip{\oper{A}_1^\lambda h}{h}_{L^2_0(\Omega)}=\norm{\partial_x h}_{L^2_0(\Omega)}^2-\sum_\pm\iint\overline{h}\mu_e^\pm(1-\Q)h\,d\+vdx
\end{equation*}
Now we note as $h\in H^2_{0,n}(\Omega)$, $h$ is in $\dom{\D}$ when interpreted in $\Ls_\pm$. We now use \autoref{properties of Q}(c) to estimate,
\begin{equation*}
\begin{aligned}
\ip{\oper{A}_1^\lambda h}{h}_{L^2_0(\Omega)}&\ge \norm{\partial_x h}^2_{L^2_0(\Omega)}-\frac{1}{\lambda}\sum_\pm\norm{\mu_e^\pm/w^\pm}_{L^\infty(\Omega\times\mathbb{R}^3)}\norm{\D h}_{\Ls_\pm}\norm{h}_{\Ls_\pm}\\
&\ge \norm{\partial_x h}^2_{L^2_0(\Omega)}-\frac C{\sqrt{K}\lambda}\norm{\partial_x h}^2_{L^2_0(\Omega)}\\
&\ge K\norm{h}^2_{L^2_0(\Omega)}\left(1-\frac{C}{\sqrt{K}\lambda}\right)
\end{aligned}
\end{equation*}
where $K$ is the spectral gap of the Laplacian on the bounded domain $\Omega$, and we have used 
\begin{equation*}
\norm{\D h}^2_{\Ls_\pm}=\iint w^\pm|\hat{v}_1\partial_xh|^2\,d\+vdx\le\norm{\partial_xh}_{L^2_0(\Omega)}^2\sup_{x\in\Omega}\int w^\pm|\hat{v}_1|^2\,d\+v
\end{equation*}
and the natural bounded inclusions from $L^2_0(\Omega)$ into $\Ls_\pm$. We now just take $\Lambda>C/\sqrt{K}$.

For $\oper{A}^\lambda_2$ the proof is easier due to the $\lambda^2$ term. For $h\in H^2(\mathbb{R})$ we compute, using the formulation \eqref{eq:J lambda},
\begin{equation*}
\begin{aligned}
\ip{\oper{A}_2^\lambda h}{h}_{L^2(\mathbb{R})}&=\norm{\partial_xh}_{L^2(\mathbb{R})}^2+\lambda^2\norm{h}^2_{L^2(\mathbb{R})}-\ip{\J\begin{bmatrix}
h\\0
\end{bmatrix}}{\begin{bmatrix}
h\\0
\end{bmatrix}}_{L^2(\mathbb{R})\times L^2_0(\Omega)}\\
&\ge (\lambda^2-C')\norm{h}^2_{L^2(\mathbb{R})}
\end{aligned}
\end{equation*}
where we have used that the uniform bound in operator norm of $\J$ given by \autoref{properties of J}. We now take $\Lambda>\sqrt{C'}$.
\end{proof}

\begin{lemma}[Properties of $\M$]\label{lem:prop M}
For each $\lambda\in[0,\infty)$, the operator $\M$ is self-adjoint on $L^2(\mathbb{R})\times L^2_0(\Omega)$ with domain $H^2(\mathbb{R})\times H^2_{0,n}(\Omega)$. For any $\lambda\ge0$, the operator $\M$ has essential spectrum $[\lambda^2,\infty)$. The family $\{\M\}_{\lambda\in[0,\infty)}$ is continuous in the norm resolvent topology.
\end{lemma}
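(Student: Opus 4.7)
Write $\M = \+{\oper{A}}^\lambda - \J$, where
\[
\+{\oper{A}}^\lambda = \begin{bmatrix}-\partial_x^2+\lambda^2 & 0\\ 0 & \partial_x^2\end{bmatrix}
\]
is a diagonal direct sum of the free Schr\"odinger operator $-\partial_x^2+\lambda^2$ on $L^2(\mathbb{R})$ with domain $H^2(\mathbb{R})$, and the Neumann Laplacian on $L^2_0(\Omega)$ (restricted to mean-zero functions) with domain $H^2_{0,n}(\Omega)$. Each summand is a standard self-adjoint operator on the given domain, so $\+{\oper{A}}^\lambda$ is self-adjoint on $H^2(\mathbb{R})\times H^2_{0,n}(\Omega)$. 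Since $\J$ is bounded and symmetric by \autoref{properties of J}(b),(e), the Kato--Rellich theorem implies that $\M=\+{\oper{A}}^\lambda-\J$ is self-adjoint on the same domain.

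For the essential spectrum, I would invoke Weyl's theorem in the form that $\sigma_\mathrm{ess}(\M)=\sigma_\mathrm{ess}(\+{\oper{A}}^\lambda)$ whenever $\J$ is $\+{\oper{A}}^\lambda$-relatively compact. To verify this, use the factorisation $\J=\J\+{\oper{P}}$ provided by \autoref{properties of J}(f), where $\+{\oper{P}}$ is multiplication by $\mathbbm{1}_\Omega$ in each component. The Rellich--Kondrachov theorem shows that $\+{\oper{P}}(\+{\oper{A}}^\lambda-i)^{-1}$ is compact (the resolvent maps $L^2$ into $H^2$, and multiplication by a compactly supported function followed by restriction to $\Omega$ embeds $H^2$ compactly into $L^2$). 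Hence $\J(\+{\oper{A}}^\lambda-i)^{-1}$ is compact. Now $\+{\oper{A}}^\lambda$ is a direct sum: the Neumann Laplacian on $L^2_0(\Omega)$ has compact resolvent (bounded domain) and therefore no essential spectrum, while $-\partial_x^2+\lambda^2$ has essential spectrum $[\lambda^2,\infty)$. Therefore $\sigma_\mathrm{ess}(\+{\oper{A}}^\lambda)=[\lambda^2,\infty)$, and Weyl's theorem gives $\sigma_\mathrm{ess}(\M)=[\lambda^2,\infty)$.

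For norm resolvent continuity, I would mimic the argument in the proof of \autoref{lem:prop A}(b). The second resolvent identity yields
\[
(\M-i)^{-1}-(\M[\sigma]-i)^{-1}=(\M-i)^{-1}\bigl((\M[\sigma]-\M)\bigr)(\M[\sigma]-i)^{-1},
\]
and the difference $\M[\sigma]-\M$ splits into a scalar multiple of the identity in the first diagonal block, namely $(\sigma^2-\lambda^2)$ times the projection onto the first factor, plus $-(\J[\sigma]-\J)$. The scalar part is trivially $O(|\sigma^2-\lambda^2|)$ in operator norm. For the second part, insert the factorisation $\J[\sigma]-\J=(\J[\sigma]-\J)\+{\oper{P}}$ and observe that $\+{\oper{P}}(\M[\sigma]-i)^{-1}$ is compact (since it differs from $\+{\oper{P}}(\+{\oper{A}}^\sigma-i)^{-1}$ by a compact operator, by resolvent identity and boundedness of $\J[\sigma]$). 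Strong convergence $\J\tos\J[\sigma]$ from \autoref{properties of J}(c)-(d), combined with compactness of this factor, upgrades to norm convergence, yielding the desired norm resolvent continuity on all of $[0,\infty)$.

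The one place requiring some care is the endpoint $\lambda\to 0$, where one must use the strong-to-norm upgrade (via compactness of $\+{\oper{P}}(\M[\sigma]-i)^{-1}$) rather than norm continuity of $\J$ itself, since $\{\J\}$ is only strongly continuous at $0$. Apart from this point, every step reduces to bookkeeping with Weyl's theorem, the Kato--Rellich theorem, and the properties of $\J$ already established in \autoref{properties of J}.
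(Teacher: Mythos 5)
Your proposal is correct and follows exactly the route the paper intends: the paper leaves this proof to the reader, stating only that it ``mimics (and uses) the preceding proofs'', and your argument is precisely that --- Kato--Rellich with the bounded symmetric perturbation $\J$ for self-adjointness, Weyl's theorem via the factorisation $\J=\J\+{\oper{P}}$ and Rellich compactness for the essential spectrum (the $L^2_0(\Omega)$ block contributing none since the Neumann Laplacian on the bounded $\Omega$ has compact resolvent), and the second resolvent identity with the strong-to-norm upgrade through a compact factor for norm resolvent continuity, exactly as in \autoref{lem:prop A}(b). No gaps; your remark about the endpoint $\lambda\to0$, where only strong continuity of $\J$ is available, is the right point of care.
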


\begin{proof}
The proof essentially mimics (and uses) the preceding proofs, and is therefore left for the reader.
\end{proof}

\subsection{The cylindrically symmetric case}\label{sec:prop op cyl}\label{sec:prop operators 3d}
As many of the proofs are the same as in the 1.5 dimensional case above, we give the details only where they differ.
\begin{lemma}\label{properties of Qt}
In the respective spaces $\Ns_\pm$, $\Qtsym$ and $\Qtskew$ satisfy:
\begin{enumerate}[(a)]
 \item $\norm{\Qtsym}_{\bounded{\Ns_\pm}}=1$ and $\norm{\Qtskew}_{\bounded{\Ns_\pm}}\le\frac12$. 

\item $\Qtsym$ and $\Qtskew$ can be extended from $\lambda>0$ to $\operatorname{Re}\lambda>0$ as holomorphic operator valued functions. In particular they are continuous for $\lambda>0$ in operator norm topology.
\item As $\mathbb{R}\ni\lambda\to\infty$, $\Qtsym\tos1$, and for $u\in\dom{\Dt}$, we have the bound $\norm{(\Qtsym-1)u}_{\Ns_\pm}\le\norm{\Dt u}_{\Ns_\pm}/\lambda$.
\item As $\mathbb{R}\ni\lambda\to\infty$, $\Qtskew\tos0$, and for $u\in\dom{\Dt}$, we have the bound $\norm{\Qtsym{}u}_{\Ns_\pm}\le\norm{\Dt u}_{\Ns_\pm}/\lambda$.
\item As $\lambda\to0$, $\Qtsym$ converge strongly to the projection operators $\Qt[0]$ defined in \autoref{def:projection operators 3d}.
\item As $\lambda\to0$, $\Qtskew$ converge strongly to $0$.
\item For any $\lambda\ge0$, $\Qtsym$ are symmetric and $\Qtskew$ are skew-symmetric.
\end{enumerate}
\end{lemma}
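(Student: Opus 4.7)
My plan is to mirror the proof of \autoref{properties of Q}, working through the functional calculus of the skew-adjoint operator $\Dt$ supplied by \autoref{properties of D 2}(a). Setting $T_\pm := -i\Dt$, which is self-adjoint with real spectrum, and denoting its spectral measure by $M_\pm$, I would first record the representations
\begin{equation*}
\Qtsym = \int_\mathbb{R}\frac{\lambda^2}{\lambda^2+\alpha^2}\,dM_\pm(\alpha),\qquad \Qtskew = -i\int_\mathbb{R}\frac{\lambda\alpha}{\lambda^2+\alpha^2}\,dM_\pm(\alpha),
\end{equation*}
from which every claim follows by reading off the appropriate scalar inequality for the multiplier.

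For (a), $\sup_\alpha \lambda^2/(\lambda^2+\alpha^2) = 1$ is attained at $\alpha = 0$, so $\norm{\Qtsym}_{\bounded{\Ns_\pm}} \le 1$, with equality realised by applying the operator to any nonzero element of $\ker(\Dt)$ (e.g.\ the constant $\mathbbm{1}_\Omega$, just as in \autoref{properties of Q}). The AM-GM inequality $\lambda^2 + \alpha^2 \ge 2\lambda|\alpha|$ gives the refined bound $\lambda|\alpha|/(\lambda^2+\alpha^2) \le 1/2$, whence $\norm{\Qtskew}_{\bounded{\Ns_\pm}} \le 1/2$. For (b), I decompose, for $\mathrm{Re}(\lambda) > 0$,
\begin{equation*}
\Qtsym = \tfrac12\bigl[\lambda(\lambda + \Dt)^{-1} + \lambda(\lambda - \Dt)^{-1}\bigr], \quad \Qtskew = \tfrac12\bigl[\lambda(\lambda + \Dt)^{-1} - \lambda(\lambda - \Dt)^{-1}\bigr];
\end{equation*}
both resolvents are analytic in $\lambda$ on the right half-plane because $\pm\Dt$ have purely imaginary spectrum, which gives the holomorphic extension.

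For (c) and (d), I invoke the AM-GM estimates $\alpha^2/(\lambda^2+\alpha^2) \le |\alpha|/\lambda$ and $\lambda|\alpha|/(\lambda^2+\alpha^2) \le |\alpha|/\lambda$, which give, for $u \in \dom{\Dt}$, both $\norm{(\Qtsym - 1)u}_{\Ns_\pm} \le \norm{\Dt u}_{\Ns_\pm}/\lambda$ and $\norm{\Qtskew u}_{\Ns_\pm} \le \norm{\Dt u}_{\Ns_\pm}/\lambda$ (I read the $\Qtsym$ appearing in the bound of (d) as a typo for $\Qtskew$); strong convergence to $1$ and $0$ as $\lambda \to \infty$ then extends to all of $\Ns_\pm$ by density of $\dom{\Dt}$. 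For (e) and (f), the scalar multipliers $\lambda^2/(\lambda^2+\alpha^2)$ and $\lambda\alpha/(\lambda^2+\alpha^2)$ converge pointwise as $\lambda \to 0$ to $\chi_{\{0\}}(\alpha)$ and to $0$, respectively, with the uniform bounds $1$ and $1/2$; so dominated convergence applied to the spectral measure $d\norm{M_\pm(\alpha)u}_{\Ns_\pm}^2$ (as in the 1.5d argument) yields $\Qtsym \tos M_\pm(\{0\}) = \Qt[0]$ and $\Qtskew \tos 0$. Finally, (g) for $\lambda > 0$ is immediate from the representations: $\Qtsym$ is a real function of the self-adjoint operator $T_\pm$ and hence self-adjoint, while $\Qtskew = -\lambda\Dt(\lambda^2 - \Dt^2)^{-1}$ is the product of the commuting skew-adjoint operator $\Dt$ and the self-adjoint operator $\lambda(\lambda^2-\Dt^2)^{-1}$, hence skew-adjoint; the $\lambda = 0$ limit operators $\Qt[0]$ and $0$ are trivially of the required type.

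I expect no real obstacle: this is essentially a line-by-line transcription of \autoref{properties of Q}. The only genuinely new ingredient is the refined AM-GM bound $\norm{\Qtskew}_{\bounded{\Ns_\pm}} \le 1/2$, which does not follow from the crude individual estimates $\norm{\lambda(\lambda\pm\Dt)^{-1}} \le 1$ on the two summands in the decomposition above, but requires the sharper comparison at the level of the spectral multiplier.
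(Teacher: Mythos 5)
Your proposal is correct and follows essentially the same route as the paper: both rest on the spectral theorem/functional calculus for the skew-adjoint $\Dt$ (equivalently the self-adjoint $-i\Dt$), mirroring \autoref{properties of Q}, with the bound $\norm{\Qtskew}\le\tfrac12$ read off from the multiplier $\lambda\alpha/(\lambda^2+\alpha^2)$ exactly as in the paper. The only cosmetic difference is that you prove (f) by dominated convergence on the spectral measure directly, whereas the paper deduces it from the identity $\lambda(\lambda+\Dt)^{-1}=\Qtsym+\Qtskew$; these are interchangeable, and your reading of the $\Qtsym$ in part (d) as a typo for $\Qtskew$ agrees with what the paper actually proves.
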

\begin{proof}
The claims about $\Qtsym$ may be proved in the same way as those in \autoref{properties of Q}. For (a), the spectral theorem applied to the self-adjoint operators $-i\Dt$ implies that $\Qtskew$ are unitarily equivalent to a multiplication operator, so that
\begin{equation*}
\norm{\Qtskew}_{\bounded{\Ns_\pm}}=\norm{\frac{-i\alpha\lambda}{\lambda^2+\alpha^2}}_{L^\infty_\alpha(\operatorname{sp}(i\Dt))}\le \norm{\frac{-i\alpha\lambda}{\lambda^2+\alpha^2}}_{L^\infty_\alpha(\mathbb{R})}=\frac12.
\end{equation*}
The proof of (b) follows, as in the proof of \autoref{properties of Q}, from the holomorphicity of the resolvent. For (d), we let $u\in\dom{\Dt}$, and then for $\lambda>0$ we have,
\begin{align*}
\norm{\Qtskew{}u}_{\Ns_\pm}&\le \lambda \norm{(\lambda^2+\Dt^2)^{-1}}_{\bounded{\Ns_\pm}} \norm{\Dt u}_{\Ns_\pm}\\
&\le \frac{1}{\lambda}\norm{\Dt u}_{\Ns_\pm}\to 0\text{   as } \lambda\to\infty.
\end{align*}
The strong convergence to $0$ then follows from the density of $\dom{\Dt}$. For (f), we repeat the proof of \autoref{properties of Q}, noting that it is shown that $\lambda(\lambda+\Dt)^{-1}\tos\Qt[0]$ and $\Qtsym{}\tos\Qt[0]$ as $\lambda\to0$. That $\Qtskew{}\tos0$ as $\lambda\to0$ now follows from the identity, valid for all $\lambda>0$,
\begin{equation*}
\lambda(\lambda+\Dt)^{-1}=\Qtsym{}+\Qtskew{}.
\end{equation*}
Finally, (g) is a consequence of \autoref{properties of D 2}.
\end{proof}

\begin{lemma}\label{properties of J 3d}
The operators $\Jt$, and $\widetilde{\oper{B}}_i^\lambda$ for $i=1,2,3,4$ have the properties:
\begin{enumerate}[(a)]
\item For all $\lambda\in[0,\infty)$, $\widetilde{\oper{B}}_1^\lambda\in \bounded{L^2_\theta(\mathbb{R}^3;\mathbb{R}^3),L^2_{cyl}(\mathbb{R}^3)}$, $\widetilde{\oper{B}}_2^\lambda\in \bounded{L^2_\theta(\mathbb{R}^3;\mathbb{R}^3),L^2_{rz}(\mathbb{R}^3;\mathbb{R}^3)}$, $\widetilde{\oper{B}}_3^\lambda\in \bounded{L^2_{cyl}(\mathbb{R}^3),L^2_{rz}(\mathbb{R}^3;\mathbb{R}^3)}$ and $\widetilde{\+{\oper{B}}}_4^\lambda\in\bounded{L^2_{\theta}(\mathbb{R}^3;\mathbb{R}^3),L^2_{cyl}(\mathbb{R}^3)\times L^2_{rz}(\mathbb{R}^3;\mathbb{R}^3)}$ with bounds uniform in $\lambda$.
\item Each of $(0,\infty)\ni\lambda\mapsto\Jt$ and $(0,\infty)\ni\lambda\mapsto\widetilde{\oper{B}}_i^\lambda$, $i=1,2,3,4$ are continuous in the operator norm topology.
\item As $\lambda\to0$, $\Jt\to {\Jt[0]}$, $\widetilde{\oper{B}}_1^\lambda\to\widetilde{\oper{B}}_1^0$, $\widetilde{\oper{B}}_2^\lambda\to0$ and $\widetilde{\oper{B}}_3^\lambda\to0$ in the strong topology.
\item For any $\lambda\ge0$ the operator $\Jt$ is symmetric.
\item Let $\widetilde{\+{\oper{P}}}$ be the multiplication operator acting in $L^2_{cyl}(\mathbb{R}^3)\times L^2_\theta(\mathbb{R}^3;\mathbb{R}^3)\times L^2_{rz}(\mathbb{R}^3;\mathbb{R}^3)$ defined by
	\begin{equation*} 
	\widetilde{\+{\oper{P}}}=\begin{bmatrix}\mathbbm{1}_\Omega&0&0\\0&\mathbbm{1}_\Omega&0\\0&0&\mathbbm{1}_\Omega\end{bmatrix}
	\end{equation*}
where $\mathbbm{1}_\Omega$ is the indicator function of the set $\Omega$. Then $\Jt=\Jt\widetilde{\+{\oper{P}}}$.
\end{enumerate}
\end{lemma}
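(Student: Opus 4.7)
The plan is to mirror the strategy of \autoref{properties of J} from the $1.5d$ case, exploiting the boundedness, operator-norm continuity, and strong limits of $\Qtsym$ and $\Qtskew$ established in \autoref{properties of Qt}. Since the operators $\widetilde{\oper{B}}_i^\lambda$ appear as the off-diagonal blocks of $\Jt$ (together with the purely multiplicative ``$r\mu_p^\pm\hat v_\theta$'' term coming from the diagonal piece $\widetilde{\oper{A}}_2^\lambda$), it suffices to prove (a)--(e) for $\Jt$ and then read off the corresponding statements for each $\widetilde{\oper{B}}_i^\lambda$.

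\textbf{Boundedness (a).} Write each component of $\Jt$ as a composition of the form
\[
h \;\longmapsto\; \sum_\pm \int \oper{T}_\pm^\sharp \,\mu_\star^\pm\, \oper{R}^\lambda\, \oper{T}_\pm h\,d\+v,
\]
where $\oper{T}_\pm$ is a multiplication by a bounded vector in $\+{\hat v}$ (one of $\+{\hat v}_\theta$, $\+{\hat v}_{rz}$, $\hat v_\theta$, or the identity), $\mu_\star^\pm\in\{\mu_e^\pm,\mu_p^\pm\}$, and $\oper{R}^\lambda\in\{\Qtsym,\Qtskew,\Qtsym-1,I\}$; we also use the natural bounded inclusions of $L^2_{cyl}$, $L^2_\theta$, $L^2_{rz}$ into $\Ns_\pm$. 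Since $|\+{\hat v}|<1$, the operators $\oper{T}_\pm,\oper{T}_\pm^\sharp$ are bounded; by \autoref{properties of Qt}(a) the middle factor is bounded by a constant independent of $\lambda$; and by the weight condition \eqref{eq:int-condition} the multiplication by $\mu_\star^\pm$ followed by integration in $\+v$ is bounded from $\Ns_\pm$ to the appropriate $L^2$ space (this uses that $|\mu_\star^\pm|\lesssim w^\pm$ and $w^\pm\in L^1_{\+v}$). The uniform bound on $\Jt$ follows; the bounds for $\widetilde{\oper{B}}_1^\lambda,\widetilde{\oper{B}}_2^\lambda,\widetilde{\oper{B}}_3^\lambda,\widetilde{\+{\oper{B}}}_4^\lambda$ are the norms of the off-diagonal blocks.

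\textbf{Continuity and strong limits (b)--(c).} For $\lambda>0$, \autoref{properties of Qt}(b) gives operator-norm continuity of $\Qtsym$ and $\Qtskew$, and this passes through the (continuous) multiplications by $\+{\hat v}$, $\mu_\star^\pm$, and integration in $\+v$ to give operator-norm continuity of every component, hence of $\Jt$ and of each $\widetilde{\oper{B}}_i^\lambda$. For the strong limit at $\lambda=0$, \autoref{properties of Qt}(e),(f) yield $\Qtsym\tos\Qt[0]$ and $\Qtskew\tos 0$. Because the outer multiplications and the $d\+v$-integration are bounded operators, strong convergence is preserved, so $\widetilde{\oper{B}}_1^\lambda\tos\widetilde{\oper{B}}_1^0$ and $\widetilde{\oper{B}}_2^\lambda,\widetilde{\oper{B}}_3^\lambda\tos 0$ (the latter two vanish in the limit because their only ``$\lambda$-dependent'' factor is $\Qtskew$); summing the block entries gives $\Jt\tos\Jt[0]$.

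\textbf{Symmetry (d) and support (e).} For symmetry, the diagonal blocks of $\Jt$ are manifestly symmetric (the $r\hat v_\theta\mu_p^\pm$ multiplier is real, and the $\mu_e^\pm\Qtsym$-sandwich is symmetric since $\Qtsym$ is symmetric by \autoref{properties of Qt}(g)). For the off-diagonal blocks one checks directly from \eqref{eq:operators3d} that $\widetilde{\oper{B}}_1^\lambda$ and $(\widetilde{\oper{B}}_1^\lambda)^*$ are genuine Hilbert-space adjoints (again using symmetry of $\Qtsym$), while $\widetilde{\oper{B}}_2^\lambda,\widetilde{\oper{B}}_3^\lambda$ and their starred counterparts differ by a minus sign that is exactly accounted for by the skew-symmetry of $\Qtskew$ (\autoref{properties of Qt}(g)); that is the reason for the extra minus signs in the formulas for $(\widetilde{\oper{B}}_2^\lambda)^*$ and $(\widetilde{\oper{B}}_3^\lambda)^*$. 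Finally, (e) is immediate: the factors $\mu_e^\pm$, $\mu_p^\pm$ in every term of $\Jt$ are supported in $\Omega$ in the $\+x$-variable, so pre-multiplication by $\widetilde{\+{\oper{P}}}$ leaves each entry unchanged. The only mildly nontrivial point in the whole argument is the bookkeeping of signs and adjoints in step (d); everything else is a direct application of \autoref{properties of Qt} together with the weighted-$L^2$ integrability of $\mu_e^\pm,\mu_p^\pm$.
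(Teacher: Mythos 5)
Your proposal is correct and follows essentially the same route as the paper: the paper likewise reduces everything to the uniform bounds, norm continuity, strong limits and (skew-)symmetry of $\Qtsym$ and $\Qtskew$ from \autoref{properties of Qt}, combined with the decay/compact-support assumptions on $\mu^\pm_e,\mu^\pm_p$ and the block structure of $\Jt$, exactly as in the $1.5d$ proof of \autoref{properties of J}. Your slightly more explicit bookkeeping of the adjoint/sign structure in part (d) matches the paper's remark that the off-diagonal entries carry the $\widetilde{\oper{B}}_i^\lambda$ and their adjoints "in the appropriate configuration".
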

\begin{proof}
That the operators map the corresponding spaces to each other may be verified directly from \eqref{eq:operators3d}, noting in particular the notation $\+{\hat{v}}_{\theta}=\+e_\theta\hat{v}_\theta$ and $\+{\hat{v}}_{rz}=\+e_r\hat{v}_r+\+e_z\hat{v}_z$. As in the proof of \autoref{properties of J}, the uniform (in $\lambda$) bound on the operator norms may be obtained using the decay assumptions on the equilibrium and the uniform bound on the norms of $\Qtsym$ and $\Qtskew$ given by \autoref{properties of Qt}. In the same way (c) and (d) follow from the corresponding results in \autoref{properties of Qt}. 

To show the symmetry of $\Jt$ for $\lambda>0$ we use the block matrix form, noting that $\Qtsym$ appears on the diagonal, and that the off diagonal entries have $\widetilde{\oper{B}}_i^\lambda$ and their adjoints in the appropriate configuration. Then we extend to $\lambda=0$ by the strong convergence. As in \autoref{properties of J} (e) follows from the spatial support properties of the equilibrium.
\end{proof}
\begin{lemma}[Properties of $\widetilde{\oper{A}}_1^\lambda$, $\widetilde{\oper{A}}_2^\lambda$, $\widetilde{\oper{A}}_3^\lambda$ and $\widetilde{\+{\oper{A}}}_4^\lambda$]\label{lem:prop A 3d}
Let $0\leq\lambda  <\infty$.
	\begin{enumerate}[(a)]
	\item The operator $\widetilde{\oper{A}}_1^\lambda$ is self-adjoint on $L^2_{cyl}(\mathbb{R}^3)$, the operator $\widetilde{\oper{A}}_2^\lambda $ is self-adjoint on $L^2_\theta(\mathbb{R}^3;\mathbb{R}^3)$, $\widetilde{\oper{A}}_3^\lambda$ is self-adjoint on $L^2_{rz}(\mathbb{R}^3;\mathbb{R}^3)$ and $\widetilde{\+{\oper{A}}}_4^\lambda$ is self-adjoint on $L^2_{cyl}(\mathbb{R}^3)\times L^2_{rz}(\mathbb{R}^3;\mathbb{R}^3)$ with the respective domains $H^2_{cyl}(\mathbb{R}^3)$, $H^2_{\theta}(\mathbb{R}^3;\mathbb{R}^3)$, $H^2_{rz}(\mathbb{R}^3;\mathbb{R}^3)$ and $H^2_{cyl}(\mathbb{R}^3)\times H^2_{rz}(\mathbb{R}^3;\mathbb{R}^3)$.
	\item The mappings $[0,\infty)\ni\lambda\mapsto\widetilde{\oper{A}}_1^\lambda$, $[0,\infty)\ni\lambda\mapsto\widetilde{\oper{A}}_2^\lambda$, $[0,\infty)\ni\lambda\mapsto\widetilde{\oper{A}}_3^\lambda$ and
	$[0,\infty)\ni\lambda\mapsto\widetilde{\+{\oper{A}}}_4^\lambda$ are continuous  in the norm resolvent topology.
	 
	\item The spectra of $\widetilde{\oper{A}}_1^\lambda$, $\widetilde{\oper{A}}_2^\lambda$, $\widetilde{\oper{A}}_1^\lambda$ and $\widetilde{\+{\oper{A}}}_4^\lambda$ are discrete and finite in $(-\infty,\lambda^2)$  and continuous (possibly with embedded eigenvalues) in $[\lambda^2,\infty)$.
	 
	\item There exist constants $\gamma>0$ and $\Lambda>0$ such that for all $\lambda  \geq\Lambda  $, $\widetilde{\oper{A}}_i^\lambda>\gamma$, $i=1,2,3,4$.
	\end{enumerate}
\end{lemma}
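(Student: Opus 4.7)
The plan is to mirror the proof of \autoref{lem:prop A} (the $1.5d$ analogue) essentially verbatim, with two structural simplifications: (i) every diagonal entry here carries a $\lambda^2$ term (in contrast with $\oper{A}_1^\lambda$ in the $1.5d$ case), which makes part (d) immediate; and (ii) the reduction of $\Delta$ and $\+\Delta$ on the cylindrically symmetric subspaces (already discussed before \autoref{def:projection operators 3d}) means the unperturbed parts are self-adjoint on the stated Sobolev domains without further work.

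For part (a), each of the four operators has the form $-\Delta+\lambda^2+\oper{K}$ (or vector Laplacian) where $\oper{K}$ is a bounded symmetric operator extracted from \autoref{properties of J 3d}. Self-adjointness on the stated $H^2$-type domains follows from Kato-Rellich. For $\widetilde{\+{\oper{A}}}_4^\lambda$ the off-diagonal blocks $\widetilde{\oper{B}}_3^\lambda$ and $(\widetilde{\oper{B}}_3^\lambda)^*$ are bounded by \autoref{properties of J 3d}(a), so the full block operator is a bounded symmetric perturbation of the direct sum $\widetilde{\oper{A}}_1^\lambda\oplus \widetilde{\oper{A}}_3^\lambda$.

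For part (b), I apply the second-resolvent identity exactly as in the proof of \autoref{lem:prop A}(b): the difference $(\widetilde{\oper{A}}_i^\lambda-i)^{-1}-(\widetilde{\oper{A}}_i^\sigma-i)^{-1}$ splits into a $|\lambda^2-\sigma^2|$ contribution and a term containing the difference of the bounded perturbations in the middle. Using \autoref{properties of J 3d}(e), I insert $\widetilde{\+{\oper{P}}}=\mathbbm{1}_\Omega$ between the perturbation difference and the resolvent, and then $\widetilde{\+{\oper{P}}}(\widetilde{\oper{A}}_i^\sigma-i)^{-1}$ is compact by Rellich's embedding (the resolvent lands in the appropriate $H^2$ space, and $\widetilde{\+{\oper{P}}}$ localises to a bounded set). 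Composing compact with the strong convergence of the perturbations from \autoref{properties of J 3d}(c) upgrades the strong limit to a norm limit. Part (c) then follows: the essential spectrum is $[\lambda^2,\infty)$ by Weyl's theorem (the perturbation is relatively compact by the same $\widetilde{\+{\oper{P}}}$ trick plus Rellich), and the negative-eigenvalue finiteness below $\lambda^2$ comes from \autoref{lemma:weyl-finite}, whose hypotheses are exactly met after shifting by $\lambda^2$.

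For part (d), since each operator has an explicit $\lambda^2$ term, I estimate for $h$ in the domain
\begin{equation*}
\langle \widetilde{\oper{A}}_i^\lambda h,h\rangle \;\ge\; \|\nabla h\|^2 + \lambda^2\|h\|^2 - C\|h\|^2,
\end{equation*}
where $C$ is the uniform operator bound from \autoref{properties of J 3d}(b); choosing $\Lambda>\sqrt{C+\gamma}$ yields $\widetilde{\oper{A}}_i^\lambda>\gamma$ for $i=1,2,3$ and all $\lambda\geq\Lambda$. For $\widetilde{\+{\oper{A}}}_4^\lambda$, both diagonal pieces satisfy this and the off-diagonal $\widetilde{\oper{B}}_3^\lambda$ has a uniform bound $C'$; a single Cauchy-Schwarz and Young inequality absorb the cross term, giving positivity for sufficiently large $\Lambda$. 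The only technical point worth verifying carefully is that the Rellich-based compactness argument in step (b) respects the cylindrical/angular/$rz$-decomposition of the ambient space, but since $L^2_{cyl}$, $L^2_\theta$ and $L^2_{rz}$ are closed reducing subspaces for the corresponding Laplacian and since the perturbations preserve these subspaces (they are built from cylindrically symmetric equilibria), compactness on the full $L^2(\mathbb{R}^3;\mathbb{R}^k)$ passes to the restriction. This is the step I would expect a referee to scrutinise most closely.
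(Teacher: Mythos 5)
Your proposal is correct and takes essentially the same approach as the paper: the paper's proof is a one-line remark that the argument is analogous to that of $\oper{A}_2^\lambda$ in \autoref{lem:prop A}, and you have simply spelled out that analogy, correctly noting that the presence of the $\lambda^2$ term in all four operators makes part (d) follow the easier $\oper{A}_2^\lambda$ branch of the $1.5d$ proof rather than the $\oper{A}_1^\lambda$ branch (which relied on the Poincar\'e/spectral-gap bound on $\Omega$). One trivial correction: the uniform-in-$\lambda$ operator bound you invoke is \autoref{properties of J 3d}(a), not (b).
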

\begin{proof}
The proof for each of $\widetilde{\oper{A}}_i^\lambda$, $i=1,2,3,4$ is analogous to that of \autoref{lem:prop A} for $\oper{A}_2^\lambda$. We omit the details.
\end{proof}

\begin{lemma}[Properties of $\Mt$]\label{lem:prop mt}
For each $\lambda\in[0,\infty)$, the operator $\widetilde{\+{\oper{M}}}^\lambda$ is self-adjoint on $L^2_\theta(\mathbb{R}^3;\mathbb{R}^3)\times L^2_{cyl}(\mathbb{R}^3)\times L^2_{rz}(\mathbb{R}^3;\mathbb{R}^3)$ with domain $H^2_\theta(\mathbb{R}^3;\mathbb{R}^3)\times H^2_{cyl}(\mathbb{R}^3)\times H^2_{rz}(\mathbb{R}^3;\mathbb{R}^3)$. For any $\lambda\ge0$, the operator $\widetilde{\+{\oper{M}}}^\lambda$ has essential spectrum $(-\infty,-\lambda^2]\cup[\lambda^2,\infty)$. The family $\{\widetilde{\+{\oper{M}}}^\lambda\}_{\lambda\in[0,\infty)}$ is continuous in the norm resolvent topology.
\end{lemma}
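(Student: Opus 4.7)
The strategy closely parallels the proof of \autoref{lem:prop M} in the $1.5d$ case, exploiting the decomposition $\Mt = \+{\oper{A}}_{\mathrm{free}}^\lambda - \Jt$ where
\[
\+{\oper{A}}_{\mathrm{free}}^\lambda
=
\begin{bmatrix}
-\+\Delta+\lambda^2 & 0 & 0\\
0 & \Delta-\lambda^2 & 0\\
0 & 0 & \+\Delta-\lambda^2
\end{bmatrix}
\]
is the block-diagonal ``free'' operator. First I would establish self-adjointness: the three diagonal blocks are self-adjoint on $H^2_\theta(\mathbb{R}^3;\mathbb{R}^3)$, $H^2_{cyl}(\mathbb{R}^3)$ and $H^2_{rz}(\mathbb{R}^3;\mathbb{R}^3)$ respectively (recall from \autoref{sec:intro-cyl} that $\+\Delta$ reduces on both $L^2_\theta$ and $L^2_{rz}$). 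Since $\Jt$ is bounded and symmetric by \autoref{properties of J 3d}, the Kato--Rellich theorem immediately yields self-adjointness of $\Mt$ on the stated product domain.

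For the essential spectrum, note that $\+{\oper{A}}_{\mathrm{free}}^\lambda$ has essential spectrum $(-\infty,-\lambda^2]\cup[\lambda^2,\infty)$, obtained as the union of the spectra of its three diagonal blocks (the top block contributes $[\lambda^2,\infty)$ and the lower two contribute $(-\infty,-\lambda^2]$). By \autoref{properties of J 3d}(e) we have the crucial factorisation $\Jt=\Jt\widetilde{\+{\oper{P}}}$, where $\widetilde{\+{\oper{P}}}$ multiplies by the indicator function of the bounded set $\Omega$. Rellich's theorem ensures that $\widetilde{\+{\oper{P}}}(\+{\oper{A}}_{\mathrm{free}}^\lambda-i)^{-1}$ is compact, so that $\Jt$ is a relatively compact perturbation of $\+{\oper{A}}_{\mathrm{free}}^\lambda$; Weyl's theorem on essential spectra then preserves the essential spectrum under the addition of $-\Jt$.

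For norm resolvent continuity at a point $\sigma\in[0,\infty)$, I would use the second resolvent identity
\[
(\Mt[\lambda]-i)^{-1}-(\Mt[\sigma]-i)^{-1}
=
(\Mt[\lambda]-i)^{-1}(\Mt[\sigma]-\Mt[\lambda])(\Mt[\sigma]-i)^{-1}
\]
and decompose the difference $\Mt[\sigma]-\Mt[\lambda]$ into a scalar part $(\sigma^2-\lambda^2)\diag(\mathrm{Id},-\mathrm{Id},-\mathrm{Id})$, whose norm tends to zero as $\lambda\to\sigma$, and a remainder $(\Jt[\lambda]-\Jt[\sigma])$. For the remainder, invoking $\Jt=\Jt\widetilde{\+{\oper{P}}}$ reduces matters to showing that $(\Jt[\lambda]-\Jt[\sigma])\widetilde{\+{\oper{P}}}(\Mt[\sigma]-i)^{-1}\to0$ in operator norm. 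Since $\widetilde{\+{\oper{P}}}(\Mt[\sigma]-i)^{-1}$ is compact (the resolvent sends $L^2$ into $H^2$ locally, and Rellich applies inside the bounded set $\Omega$), while $\Jt[\lambda]\tos\Jt[\sigma]$ strongly by \autoref{properties of J 3d}(b)--(c), the strong convergence upgrades to norm convergence after composition with the compact operator.

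The only genuine subtlety, rather than a true obstacle, is keeping careful track of the reduction of $\+\Delta$ on the subspaces $L^2_\theta(\mathbb{R}^3;\mathbb{R}^3)$ and $L^2_{rz}(\mathbb{R}^3;\mathbb{R}^3)$ so that the block-diagonal structure of $\+{\oper{A}}_{\mathrm{free}}^\lambda$ is truly meaningful and each block is self-adjoint on the stated Sobolev subspace; everything else is an almost verbatim transcription of the argument used for \autoref{lem:prop M}, with the compact spatial cutoff $\widetilde{\+{\oper{P}}}$ playing the role that $\+{\oper{P}}$ played there.
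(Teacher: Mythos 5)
Your proposal is correct and follows exactly the approach the paper has in mind: the paper's proof of \autoref{lem:prop mt} is a one-line pointer to ``mimic the preceding proofs'', namely the $1.5d$ analogue \autoref{lem:prop M} (itself sketched) and the detailed arguments of \autoref{lem:prop A}. Your expansion — Kato--Rellich for self-adjointness, the factorisation $\Jt=\Jt\widetilde{\+{\oper{P}}}$ plus Rellich to get relative compactness for Weyl's theorem, and the second resolvent identity combined with the compact cutoff to upgrade strong convergence of $\Jt$ to norm resolvent convergence — is precisely the intended argument, including the correct accounting for why the essential spectrum here is two-sided (both lower blocks live on $\mathbb{R}^3$, unlike the bounded-domain block $\partial_x^2$ on $L^2_0(\Omega)$ in the $1.5d$ case).
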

\begin{proof}
This is again analogous to the previous proofs, and is left to the reader.
\end{proof}

\section{Construction of example equilibria}\label{sec:examples}
In this section we will provide (non-optimal) existence criteria for compactly supported equilibria of the $1.5d$ system which can be written in the form \eqref{eq:jeans theorem ansatz}-\eqref{eq:constants-1.5}. Examples of equilibria in the cylindrically symmetric case were already provided in \cite{Lin2007}.
\begin{proposition}[Existence of confined equilibria]\label{lem:existence-of-confined-equilibria}
Let $R>0,\alpha>2$ and $A^\pm\subset \mathbb{R}^2$ be bounded domains. Then there are constants $c,C>0$ such that if two functions $\mu^\pm(e^\pm,p^\pm)\in C^1_0(\mathbb{R}^2)$ with support  in $A^\pm$ satisfy
	\begin{equation*}
	|\mu^\pm|,|\mu_e^\pm|,|\mu_p^\pm|\le c(1+|e^\pm|)^{-\alpha}
	\end{equation*}
and a function $\psi^{ext}\in H^2_{loc}(\mathbb{R})$ satisfies
	\begin{equation*}
	|\psi^{ext}(x)|\ge C(1+|x|^2)\text{ for }|x|\ge R
	\end{equation*} 
then there are potentials $\phi^0,\psi^0\in H^2_{loc}(\mathbb{R})$ such that $(\mu^\pm(e^\pm,p^\pm),\phi^0,\psi^0,\psi^{ext},\phi^{ext}=0)$ is an equilibrium of the $1.5d$ relativistic Vlasov-Maxwell equations \eqref{eq:rvm-1.5} with spatial support in $[-R,R]$, where the relationship between $(x,v_1,v_2)$ and $(e^\pm,p^\pm)$ is as defined in \eqref{eq:constants-1.5}.
\end{proposition}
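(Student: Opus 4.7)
The plan is to realize an equilibrium as a fixed point of a nonlinear map $\mathcal{T}:(\phi,\psi)\mapsto(\tilde{\phi},\tilde{\psi})$ on a suitable closed subset of $C^0(\mathbb{R})^2$, and to apply Banach's fixed point theorem. Given a pair $(\phi,\psi)$, define the source terms
\begin{align*}
\rho[\phi,\psi](x)&=\int\!\big[\mu^+(\<{\+v}+\phi(x),v_2+\psi(x)+\psi^{ext}(x))\\
&\qquad\qquad-\mu^-(\<{\+v}-\phi(x),v_2-\psi(x)-\psi^{ext}(x))\big]\,d\+v,
\end{align*}
and analogously $j_2[\phi,\psi]$ with an additional factor of $\hat{v}_2$. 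Let $\tilde\phi,\tilde\psi\in H^2_{loc}(\mathbb{R})$ be the Poisson solutions of $-\partial_x^2\tilde\phi=\rho$ and $-\partial_x^2\tilde\psi=j_2$ with a fixed normalization (say $\tilde\phi(0)=\tilde\psi(0)=\partial_x\tilde\phi(0)=\partial_x\tilde\psi(0)=0$); any fixed point $(\phi^0,\psi^0)$ of $\mathcal{T}$ directly yields an equilibrium via \eqref{eq:jeans theorem ansatz}--\eqref{eq:constants-1.5}.

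Let $E_0=\sup\{|e|:(e,p)\in A^+\cup A^-\}$ and $P_0=\sup\{|p|:(e,p)\in A^+\cup A^-\}$. Fix $M,\kappa>0$ (to be chosen below), and let $\mathcal{B}\subset C^0(\mathbb{R})^2$ consist of pairs with $|\phi(x)|+|\psi(x)|\le M+\kappa(|x|-R)_+$ for all $x\in\mathbb{R}$. The crucial confinement observation is: if $\mu^\pm(\<{\+v}\pm\phi(x),v_2\pm\psi(x)\pm\psi^{ext}(x))\ne 0$ at some $(x,\+v)$, then $\<{\+v}\le E_0+|\phi(x)|$, hence $|v_2|\le E_0+|\phi(x)|$; combining with $|v_2\pm\psi(x)\pm\psi^{ext}(x)|\le P_0$ forces
\[
|\psi^{ext}(x)|\le E_0+P_0+|\phi(x)|+|\psi(x)|.
\]
For $(\phi,\psi)\in\mathcal{B}$ and $|x|\ge R$ the right-hand side is at most $E_0+P_0+M+\kappa(|x|-R)$. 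Given $|\psi^{ext}(x)|\ge C(1+x^2)$ for $|x|\ge R$, choosing $C$ sufficiently large (in terms of $M,\kappa,E_0,P_0,R$) makes this inequality fail at every such $x$, so $\rho[\phi,\psi]$ and $j_2[\phi,\psi]$ are supported in $[-R,R]$.

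With compactly supported sources and $|\mu^\pm|\le c(1+|e|)^{-\alpha}$, together with $\<{\+v}\le E_0+M$ on the support of the integrand, pointwise bounds give $\|\rho\|_\infty+\|j_2\|_\infty\le C_1 c$ with $C_1=C_1(M,E_0,P_0,\alpha)$. Standard 1D Green's function estimates then yield $\|\tilde\phi\|_{C^0([-R,R])}+\|\tilde\psi\|_{C^0([-R,R])}\le C_2 R^2 c$, while the affine continuations of $\tilde\phi,\tilde\psi$ outside $[-R,R]$ have slopes at most $C_1 Rc$. Taking $c$ small enough that $C_2R^2c\le M$ and $C_1Rc\le\kappa$ gives $\mathcal{T}(\mathcal{B})\subset\mathcal{B}$. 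Using $|\mu_e^\pm|,|\mu_p^\pm|\le c(1+|e|)^{-\alpha}$ and a mean value argument yields
\[
|\rho[\phi_1,\psi_1]-\rho[\phi_2,\psi_2]|+|j_2[\phi_1,\psi_1]-j_2[\phi_2,\psi_2]|\le C_3 c\,(\|\phi_1-\phi_2\|_\infty+\|\psi_1-\psi_2\|_\infty),
\]
and another Green's function estimate gives that $\mathcal{T}$ is $C_4c$-Lipschitz on $\mathcal{B}$; shrinking $c$ further makes $\mathcal{T}$ a strict contraction, so Banach's theorem furnishes a fixed point $(\phi^0,\psi^0)\in\mathcal{B}$.

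The principal difficulty is the coupled choice of the parameters $c,C,M$ (with $\kappa$ auxiliary): once $M>0$ is fixed, $C$ must be chosen sufficiently large to enforce confinement on $\mathcal{B}$, and then $c$ must be chosen small enough (in terms of $M,R,C,\alpha$ and $A^\pm$) to close both the invariance $\mathcal{T}(\mathcal{B})\subset\mathcal{B}$ and the contraction estimate. That Poisson solutions with compactly supported sources grow at most linearly outside $[-R,R]$, while $\psi^{ext}$ grows quadratically, is precisely what makes the assumed growth of $\psi^{ext}$ sufficient to enforce confinement for every $|x|\ge R$ (and not merely for large $|x|$).
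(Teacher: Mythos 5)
Your overall strategy --- confinement forced by comparing the quadratic growth of $\psi^{ext}$ against the at most linear growth of the induced potentials, plus a smallness-in-$c$ fixed point --- is the same mechanism as the paper's, but you run it differently: the paper applies Schauder's theorem to the map on the densities $(\rho,j_2)$ (bounded and supported in $[-R,R]$, with compactness from a derivative bound and Rellich), while you apply Banach's theorem to the map on the potentials. The genuine problem in your write-up is the contraction step. Your set $\mathcal{B}$ consists of functions that may grow linearly, so the unweighted sup distance you invoke, $\|\phi_1-\phi_2\|_\infty+\|\psi_1-\psi_2\|_\infty$ over all of $\mathbb{R}$, is not even finite on $\mathcal{B}$; worse, $\mathcal{T}$ is not Lipschitz for this distance for \emph{any} $c>0$. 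Indeed, outside $[-R,R]$ the outputs $\tilde\phi_1,\tilde\phi_2$ are affine with slopes determined by $\int_0^{\pm R}\rho_1$ and $\int_0^{\pm R}\rho_2$, and two inputs that are uniformly close on $[-R,R]$ generically produce densities with different integrals, so $\tilde\phi_1-\tilde\phi_2$ grows linearly and has infinite sup norm. Hence ``Banach's theorem furnishes a fixed point'' does not follow as stated: you have exhibited no complete metric space on which $\mathcal{T}$ is a strict contraction.

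The gap is repairable without changing your scheme: replace the sup norm by the weighted norm $\|u\|_w=\sup_{x\in\mathbb{R}}|u(x)|/\bigl(1+(|x|-R)_+\bigr)$, so that $\mathcal{B}$ is a closed subset of the Banach space of continuous functions with $\|u\|_w<\infty$. Your pointwise Lipschitz bound for $\rho,j_2$ only uses the input values on $[-R,R]$, where the weight equals $1$, and the output difference is controlled on $[-R,R]$ by $\tfrac{R^2}{2}\,\|\rho_1-\rho_2\|_\infty$ and outside by the slope bound $2R\,\|\rho_1-\rho_2\|_\infty$; this gives $\|\mathcal{T}(\phi_1,\psi_1)-\mathcal{T}(\phi_2,\psi_2)\|_w\le C_4c\,\|(\phi_1,\psi_1)-(\phi_2,\psi_2)\|_w$, and Banach applies for $c$ small. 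Alternatively, contract directly on $(\rho,j_2)\in L^\infty$ supported in $[-R,R]$ (the paper's choice of unknown), where no weight is needed; the paper instead uses Schauder there, which requires only continuity of the map plus compactness, and so never needs your Lipschitz estimate, at the price of losing the uniqueness and constructivity your contraction would give. Apart from this functional-setting issue, your argument is in order: the quantifier bookkeeping (fix $M,\kappa$; choose $C$ large in terms of $M,\kappa,E_0,P_0,R$; then $c$ small in terms of admissible data only), the pointwise confinement inequality $|\psi^{ext}(x)|\le E_0+P_0+|\phi(x)|+|\psi(x)|$ on the support, and the invariance estimates are all correct, and the integrability needed in the Lipschitz/invariance bounds is available either from the decay $\alpha>2$ or from the compact support $A^\pm$.
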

\begin{remark}[Trivial solutions]
Of course, the result does not say that the obtained equilibrium is not everywhere zero. This may be ruled out by choosing $\mu^\pm$ and $\psi^{ext}$ in such a way that (for example) $\mu^\pm(x=0,v=0)>0$ if $\phi^0,\psi^0\equiv0$. Let us sketch the argument. Recall that we write $f^{0,\pm}(x,v)=\mu^\pm(\left<\+v\right>\pm\phi^0(x),v_2\pm\psi^0(x)\pm\psi^{ext}(x))=\mu^\pm(e^\pm,p^\pm)$. If $f^{0,\pm}(x,\+v)=0$ for all $(x,\+v)$ then $\rho,j_i=0$ and $\phi^0,\psi^0=0$ for all $x$. Therefore $e^\pm=\<{\+v}$ and $p^\pm=v_2\pm\psi^{ext}(x)$, and
\begin{equation*}
f^{0,\pm}(0,0)=\mu^\pm(1,\pm\psi^{ext}(0)).
\end{equation*}
The RHS is something we can ensure is positive by choosing $A^\pm$, $\mu^\pm$ and $\psi^{ext}$ appropriately. Under this appropriate choice one obtains a contradiction.
\end{remark}

\begin{proof}[Proof of \autoref{lem:existence-of-confined-equilibria}]
Given two elements $\rho,j_2\in L^2(\mathbb{R})$ with compact support, we define
	\begin{equation*}
	\phi^0=G*\rho,\qquad\psi^0=G*j_2,\quad
	\end{equation*}
where $G(x)=-|x|/2$ is the fundamental solution of the Laplacian in one dimension. [We note that one expects $j_1$ to vanish for an equilibrium,  due to parity in $v_1$, hence it does not appear in the setup.] Thus we define $e^\pm=e^\pm(x,v_1,v_2)$ and $p^\pm=p^\pm(x,v_1,v_2)$ via the usual relations  \eqref{eq:constants-1.5}, which we recall  for the reader's convenience:
	\begin{equation*}
	e^\pm(x,\+v)
	=
	\left<\+v\right>\pm\phi^0(x),\qquad
	p^\pm(x,\+v)
	=
	v_2\pm\psi^0(x)\pm\psi^{ext}(x)
	\end{equation*}
($\phi^{ext}$ is zero).
We let $\mathcal{F}:L^2(\mathbb{R})^2\to L^2(\mathbb{R})^2$ be the (non-linear) map defined by
\begin{equation*}\label{eq:fixed-point-operator-F}
\mathcal{F}\begin{bmatrix}\rho\\ j_2\end{bmatrix}=\int \begin{bmatrix}1\\\hat{v}_2\end{bmatrix}(\mu^+(e^+,p^+)-\mu^-(e^-,p^-))\,d\+v.
\end{equation*}
 A fixed point of $\mathcal{F}$ is the charge and current densities of an equilibrium $(\mu^\pm(e^\pm,p^\pm),\phi^0,\psi^0,\psi^{ext},\phi^{ext}=0)$. We define $X\subseteq L^2(\mathbb{R})^2$ as
\begin{equation*}
X=\{(\rho,j_2)\in L^2(\mathbb{R})^2\;:\;\text{ both supported in }[-R,R]\text{ and bounded by }C'\}
\end{equation*}
for a positive constant $C'$ to be chosen. This set is clearly convex. We will show that for $c>0$ sufficiently small and $C>0$ sufficiently large, $\mathcal{F}$ is a compact continuous map  $X\hookrightarrow X$  and thus, by the Schauder fixed point theorem, has a fixed point. 

\paragraph{Step 1: Compact support.} $C'$ and $C$ can be chosen so that $\mathcal{F}$ maps $X$ into functions supported in $[-R,R]$.

For $(\rho,j_2)\in X$ and $|x|>R$, we have,
\begin{equation*} 
|\phi^0(x)|=|(G*\rho)(x)|\le C'\int_{-R}^R|G(x-y)|\,dy= \frac{C'}2\int^R_{-R}|x-y|\,dy= C'R|x|
\end{equation*}
and the same bound holds for $\psi^0$. This allows us to control $v_2$ using $e^\pm$ and $x$. Indeed,
\begin{equation*}
|v_2|\le \<{\+v} = e^\pm\mp\phi^0(x)\le |e^\pm|+|\phi^0(x)|\le |e^\pm|+C'R|x|.
\end{equation*}
Which gives the following lower bound for $|p^\pm|+|e^\pm|$ in terms of $x$:
\begin{equation*}
\begin{aligned}
|p^\pm|+|e^\pm|&=|v_2\pm\psi^0(x)\pm\psi^{ext}(x)|+|e^\pm|\ge \psi^{ext}(x)-|v_2|-|\psi^0(x)|+|e^\pm|\\
&\ge \psi^{ext}(x)-|e^\pm|-2C'R|x|+|e^\pm|\\
&\ge  C(1+|x|^2)-2C'R|x|.
\end{aligned}
\end{equation*}
By taking $C'$ small enough and $C$ large enough we can ensure that if $|x|>R$ then $(e^\pm,p^\pm)$ lie outside any disc in $\mathbb{R}^2$, and in particular outside $A^\pm$, where $\mu^\pm$ are supported. This proves the claim.

\paragraph{Step 2: Uniform $L^\infty$ bound.} $C'$ and $c$ can be chosen so that $\mathcal{F}$ maps to functions with $L^\infty$ norm smaller than $C'$.

Estimating $\phi^0(x)$ for $|x|\leq R$
\begin{equation*} 
|\phi^0(x)|\leq\frac{C'}2\int^R_{-R}|x-y|\,dy= \frac{C'}{2}(x^2+R^2),
\end{equation*}
we take $C'$ small enough (recall it was already taken to be small in the previous step, hence we may require it to be even smaller) so that $|\phi^0(x)|\le 3/4$ for $|x|\le R$. Now the decay assumption on $\mu^\pm$ allows us to show a uniform bound on $|\mathcal{F}_1(\rho,j_2)(x)|$ in $|x|\le R$:
\begin{equation}\label{eq:uniform-bound-on-F_1}
\begin{aligned}
|\mathcal{F}_1(\rho,j_2)(x)|&\le \sum_{\pm}\int |\mu^\pm(e^\pm,p^\pm)|\,d\+v
\le \sum_{\pm}\int\frac{c}{(1+|e^\pm|)^\alpha}\,d\+v\\
&\le \sum_{\pm}\int \frac{c}{(1+\<{\+v}-|\phi^0(x)|)^\alpha}\,d\+v
\le \sum_{\pm}\int \frac{c}{(1+\<{\+v}-\frac34)^\alpha}\,d\+v\\
&= \int \frac{2c}{(\frac14+\<{\+v})^\alpha}\,d\+v=C''c<\infty.
\end{aligned}
\end{equation}
We can bound $|\mathcal{F}_2(\rho,j_2)(x)|$ in the same way as $|\hat{v}|\le1$. Finally we choose $c$ so that $C''c\le C'$.

\paragraph{Step 3: Uniform $L^\infty$ bound on the derivative.} There is a constant $C'''$ such that for any $(\rho,j_2)\in X$, we have $\norm{\partial_x\mathcal{F}_1(\rho,j_2)}_{L^\infty[-R,R]}\le C'''$ and $\norm{\partial_x\mathcal{F}_2(\rho,j_2)}_{L^\infty[-R,R]}\le C'''$.

We compute for $\mathcal{F}_1$, and note that $\mathcal{F}_2$ is analogous. Using the chain rule, we have
\begin{equation*}
\begin{aligned}
\partial_x&\int\left(\mu^+(e^+,p^+)-\mu^-(e^-,p^-)\right)\,dv=\\
&(\partial_x\phi^0)\int(\mu^+_e(e^+,p^+)+\mu^-_e(e^-,p^-))\,dv\\
&\quad+(\partial_x\psi^0+\partial_x\psi^{ext})\int(\mu^+_p(e^+,p^+)+\mu^-_p(e^-,p^-))\,dv.
\end{aligned}
\end{equation*}
The two integrals are bounded uniformly in $x$ by the arguments in Step 2 using the corresponding assumed bounds on $\mu_e^\pm$ and $\mu_p^\pm$ respectively. As the external field $\phi^{ext}$ lies in $H^2_{loc}(\mathbb{R})$, its derivative $\partial_x\phi^{ext}$ lies in $H^1([-R,R])$ and is bounded in $L^\infty([-R,R])$ by Morrey's inequality. It remains to bound $\partial_x\phi^0$ and $\partial_x\psi^0$ uniformly for all $x\in[-R,R]$. These are controlled directly using the Green's function $G(x)$ and uniform bounds of Step 2. Indeed,
\begin{equation*}
|(\partial_x\phi^0)(x)|=|((\partial_xG)*\rho)(x)|\le\frac{C'}2\int^R_{-R}|\operatorname{sign}(x-y)|\,dy\le C'R
\end{equation*}
and the computation for $\partial_x\psi^0$ is identical.
\paragraph{Step 4: $\mathcal{F}$ is a compact continuous map from $X$ to $X$.}

Steps 1 and 2 imply that $\mathcal{F}(X)\subseteq X$. Step 3 and the Rellich theorem imply that $\mathcal{F}(X)$ is relatively compact in $X$. It remains to show that $\mathcal{F}$ is continuous. This may be shown using dominated convergence and the bounds in Step 2. Indeed, suppose that $(\rho^n,j_2^n)\subseteq X$ is a sequence converging to $(\rho,j_2)\in X$ strongly in $L^2(\mathbb{R})^2$. We shall show that $\mathcal{F}_1(\rho^n,j_2^n)\to\mathcal{F}_1(\rho,j_2)$ in $L^2$, the result for $\mathcal{F}_2$ is analogous. By Step 2 and dominated convergence it is enough to show convergence pointwise, i.e. for each $x\in[-R,R]$. Next, by \eqref{eq:uniform-bound-on-F_1} and dominated convergence again, it is sufficient to show that the corresponding densities $\mu^\pm(e^\pm,p^\pm)$ converge pointwise in $(x,\+v)$. Continuity of $\mu^\pm$ reduces this to showing pointwise convergence of the corresponding microscopic energy and momenta $e^\pm$ and $p^\pm$. The definitions of these quantities imply that it is enough to show that the corresponding electric and magnetic potentials $\phi^{0,n}$ and $\psi^{0,n}$ converge pointwise. Finally, as the potentials are $(\rho^n,j_2^n)$ convolved with $G(x)=-|x|/2$, the convergence $(\rho^n,j_2^n)\to (\rho,j_2)$ in $L^2([-R,R])^2$ gives the required pointwise convergence.

\bigskip
This concludes the proof.
\end{proof}

\bibliography{library}
\bibliographystyle{plain}
\end{document}